\colorlet{VeryRed}{red!90!black!80}   
\renewcommand{\epsilon}{\varepsilon}
\newcommand{\euler}{\mathrm{e}}
\newcommand{\rout}{R}
\newcommand{\R}{R}
\newcommand{\T}{\mathrm{T}}
\newcommand{\rin}{r}
\newcommand{\Eins}{\chi}
\newcommand{\sfuc}{\mathrm{sfuc}}
\newcommand{\drm}{\mathrm{d}}
\newcommand{\RR}{\mathbb{R}}
\newcommand{\CC}{\mathbb{C}}
\newcommand{\NN}{\mathbb{N}}	
\newcommand{\ZZ}{\mathbb{Z}}
\newcommand{\PP}{\mathbb{P}}
\newcommand{\EE}{\mathbb{E}}
\newcommand{\supp}{\operatorname{supp}}
\newcommand{\cut}{\Theta}
\newcommand{\x}{z}
\newcommand{\au}{\alpha_1}
\newcommand{\ao}{\alpha_2}
\newcommand{\bu}{\beta_1}
\newcommand{\bo}{\beta_2}
\newcommand{\BIGOP}[1]{\mathop{\mathchoice%
{\raise-0.22em\hbox{\huge $#1$}}%
{\raise-0.05em\hbox{\Large $#1$}}{\hbox{\large $#1$}}{#1}}}
\newcommand{\BIGboxplus}{\mathop{\mathchoice%
{\raise-0.35em\hbox{\huge $\boxplus$}}%
{\raise-0.15em\hbox{\Large $\boxplus$}}{\hbox{\large $\boxplus$}}{\boxplus}}}
\newcommand{\bigtimes}{\BIGOP{\times}}
\DeclareMathOperator{\funs}{s}
\newtheorem{theorem}{Theorem}[section]
\newtheorem{lemma}[theorem]{Lemma}
\newtheorem{proposition}[theorem]{Proposition}
\newtheorem{corollary}[theorem]{Corollary}
\theoremstyle{definition}
\newtheorem{definition}[theorem]{Definition}
\theoremstyle{remark}
\newtheorem{remark}[theorem]{Remark}
\begin{document}
%
%
%
%
\title{Scale-free unique continuation principle, eigenvalue lifting and Wegner estimates for random Schr\"odinger operators}

\author{Ivica Naki\'c}
\affil{University of Zagreb, Department of Mathematics, Croatia}
\author{Matthias T\"aufer}
\author{Martin Tautenhahn}
\author{Ivan Veseli\'c}
\affil{Technische Universit\"at Chemnitz, Fakult\"at f\"ur Mathematik, Germany}
\date{\vspace{-3em}}
\maketitle
\begin{abstract}
We prove a scale-free, quantitative unique continuation principle for 
functions in the range of the spectral projector $\chi_{(-\infty,E]}(H_L)$ 
of a Schr\"odinger operator $H_L$ on a cube of side $L\in \NN$, with bounded potential.
Such estimates are also called, depending on the context, uncertainty principles, observability estimates, or spectral inequalities.
We apply it to (i) prove a Wegner estimate for random Schr\"odinger operators with non-linear parameter-dependence and to (ii) exhibit the dependence of the control cost on geometric model parameters for the heat equation in a multi-scale domain.
\end{abstract}
\ifthenelse{\boolean{journal}}{}{\tableofcontents}%


\section{Introduction}

We prove a \emph{quantitative unique continuation} inequality for 
functions in the range of the spectral projector $\chi_{(-\infty,E]}(H_L)$ 
of a Schr\"odinger  operator $H_L$ on a cube of side $L\in \NN$. It has been announced in \cite{NakicTTV-15}.
Depending on the area of mathematics and the context such estimates have various names: quantitative unique continuation principle (UCP),
uncertainty principles, spectral inequalities, observability or sampling estimates, or bounds on the vanishing order.
If the observability or sampling set  respects in a certain way the underlying lattice structure, our estimate is independent of $L$; for this
reason we call it \emph{scale-free}.
For our applications it is crucial to exhibit explicitly the dependence of the {quantitative unique continuation} inequality
on the model parameters. 
\par
A key motivation to study scale-free quantitative unique continuation estimates comes from 
the theory of random Schr\"odinger operators, in particular eigenvalue lifting estimates, Wegner bounds, and the continuity of the integrated density of states. 
(We defer precise definitions to \S \ref{s:results}.)
In fact, there is quite a number of previous papers which have derived a scale-free UCP 
and eigenvalue lifting estimates under special assumptions.
Naturally, the first situation to be considered was the case where
the Schr\"odinger operator is the pure Laplacian  $H=-\Delta$, i.e. the background  potential $V$ vanishes identically.
For instance, \cite{Kirsch-96} derives a UCP which is valid for energies in an interval at zero, i.e.~the bottom of the spectrum, 
if one has a periodic arrangement of sampling sets. 
The proof uses detailed information about hitting probabilities of Brownian motion paths, and is in sense related to Harnack inequalities.
A very elementary approach to eigenvalue lifting estimates is provided by the spatial averaging trick, used in \cite{BourgainK-05}
and \cite{GerminetHK-07} in periodic situations,  and extended to non-periodic situations in \cite{Germinet-08}. 
It is applicable to energies near zero.
A different approach for eigenvalue lifting was derived in \cite{BoutetdeMonvelNSS-06}.
In \cite{BoutetdeMonvelLS-11}  it was shown how one can conclude an uncertainty principle at low energies
based on an eigenvalue lifting estimate.
Related results have been derived for energies near spectral edges in \cite{KirschSS-98a} and \cite{CombesHN-01} using resolvent comparison.
In one space dimension eigenvalue lifting  results and Wegner estimates have been proven in
\cite{Veselic-96}, \cite{KirschV-02b}. There a periodic arrangement of the sampling set is assumed.
The proof carries over to the case of non-periodic arrangements verbatim, which has been used in the context of quantum graphs in \cite{HelmV-07}.
In the case that both the deterministic background potential and the sampling set are periodic,
an uncertainty principle and a Wegner estimate, which are valid for arbitrary bounded energy regions, have been proven in \cite{CombesHK-03,CombesHK-07}.
These papers make use of Floquet theory, hence they are a priori
restricted to periodic background potentials as well as periodic sampling sets.
An alternative proof for the result in \cite{CombesHK-07}, with more explicit control of constants, has been worked out in \cite{GerminetK-13}.
The case where the background potential is periodic but the impurities need not be periodically arranged has been considered
in \cite{BoutetdeMonvelNSS-06} and \cite{Germinet-08} for low energies.
Our main theorem unifies and generalizes all the results mentioned so far 
and  makes the dependence on the model parameters explicit.
Indeed, our scale-free unique continuation principle answers positively a question asked in \cite{RojasMolinaV-13}.
A partial answer was given already in \cite{Klein-13}.
While \cite{RojasMolinaV-13} concerns the case of a single eigenfunctions, \cite{Klein-13} treats linear combinations of eigenfunctions
corresponding to very close eigenvalues.
For a broader discussion we refer to the summer school notes \cite{TaeuferTV-16}.
\par
A second application of our scale-free UCP is in the control theory of the heat equation.
Here one asks whether one can drive a given  initial state to a desired state with a control function 
living in a specified subset, and what  the minimal $L^2$-norm of the control function (called control cost) is.
Recently, the search for optimal placement of the control set and the dependence of the control cost
on geometric features of this set has received much attention, see e.g.~\cite{PrivatTZ-15a,PrivatTZ-15b}. 
Our scale-free UCP gives an explicit estimate of the control cost w.r.t.\ the model parameters in multi-scale domains.
\par
Our proof of the scale-free unique continuation estimate uses two Carleman and nested interpolation estimates, 
an idea used before e.g.\ in \cite{LebeauR-95,JerisonL-99}. To obtain explicit estimates we need explicit weight functions.
The first Carleman estimate includes a boundary term and uses a parabolic weight function as proposed in \cite{JerisonL-99}.
The second Carleman estimate is similar to the ones in \cite{EscauriazaV-03,BourgainK-05}.
However, none of the two is quite sufficient for our purposes, so we use a variant developed in \cite{NakicRT-15}, see also Appendix \S \ref{app:carleman}.
Moreover, typically the diameter of the ambient manifold enters in the Carleman estimate.
In our case it grows unboundedly in $L$, hence the UCP constants would become worse and worse. Thus, to eliminate the $L$-dependence we have to use 
techniques developed in the context of random Schr\"odinger operators to accommodate for the multi-scale structure of the underlying domain and sampling set.  
\par
In the next section we state our main results,
\S \ref{s:proof-sfuc}
is devoted to the proof of the scale-free unique continuation principle,
\S \ref{s:proof-Wegner}
to proofs concerning random Schr\"odinger  operators,
\S \ref{s:proof-observability}
to the observability estimate of the control equation,
while certain technical aspects are deferred to the appendix.
\section{Results}
\label{s:results}
\subsection{Scale-free unique continuation and eigenvalue lifting}
Let $d \in \NN$. For $L > 0$ we denote by $\Lambda_L = (-L/2 , L/2)^d \subset \RR^d$ the cube with side length $L$, and by $\Delta_L$ the Laplace operator on $L^2 (\Lambda_L)$ with Dirichlet, Neumann or periodic boundary conditions.
Moreover, for a measurable and bounded $V : \RR^d \to \RR$ we denote by $V_L : \Lambda_L \to \RR$ its restriction to $\Lambda_L$ given by $V_L (x)  = V (x)$ for $x \in \Lambda_L$, and by
\[
H_L = -\Delta_L + V_L \quad \text{on} \quad L^2 (\Lambda_L) 
\]
the corresponding Schr\"odinger operator.
Note that $H_L$ has purely discrete spectrum. For $x \in \RR^d$ and $r > 0$ we denote by $B(x , r)$ the ball with center $x$ and radius $r$ with respect to Euclidean norm. If the ball is centered at zero we write $B (r) = B (0,r)$.
\begin{definition}
 Let $G > 0$ and $\delta > 0$. We say that a sequence $z_j \in \RR^d$, $j \in (G \ZZ)^d$ is \emph{$(G,\delta)$-equidistributed}, if
 \[
  \forall j \in (G \ZZ)^d \colon \quad  B(\x_j , \delta) \subset \Lambda_G + j .
\]
Corresponding to a $(G,\delta)$-equidistributed sequence we define for $L \in G \NN$ the set
\[
W_\delta (L) = \bigcup_{j \in (G \ZZ)^d } B(\x_j , \delta) \cap \Lambda_L .
\]
\end{definition}
\begin{theorem} \label{thm:result1}
There is $N = N(d)$ such that for all $\delta \in (0,1/2)$, all $(1,\delta)$-equidistributed sequences, all measurable and bounded $V: \RR^d \to \RR$, all $L \in \NN$, all $b \geq 0$ and all $\phi \in \mathrm{Ran} (\chi_{(-\infty,b]}(H_L))$ we have
\begin{equation}
\label{eq:result1}
\lVert \phi \rVert_{L^2 (W_\delta (L))}^2
\geq C_\sfuc \lVert \phi \rVert_{L^2 (\Lambda_L)}^2
\end{equation}
where
\[
 C_\sfuc = C_{\sfuc} (d, \delta ,  b  , \lVert V \rVert_\infty ) := \delta^{N \bigl(1 + \lVert V \rVert_\infty^{2/3} + \sqrt{b} \bigr)}.
\]
\end{theorem}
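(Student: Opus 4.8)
The plan is to run the Jerison--Lebeau strategy for spectral inequalities \cite{JerisonL-99,LebeauR-95} in a quantitative, scale-free form, combining a boundary and an interior Carleman estimate with explicit weights together with the multi-scale localisation used for random Schr\"odinger operators \cite{Klein-13,RojasMolinaV-13}. \emph{Lifting:} write $\phi=\sum_k\alpha_k\phi_k$ in an orthonormal basis of eigenfunctions $H_L\phi_k=\lambda_k\phi_k$ with $\lambda_k\le b$; since $-\Delta_L\ge0$, $\lambda_k\ge-\lVert V\rVert_\infty$. Set $\nu_k:=\sqrt{\lambda_k+\lVert V\rVert_\infty+1}\in[1,\sqrt{b+\lVert V\rVert_\infty+1}]$ and
\[
 u(x,t):=\sum_k\alpha_k\,\phi_k(x)\,\frac{\sinh(\nu_k t)}{\nu_k}\qquad\text{on }\ \Lambda_L\times(-1,1).
\]
Then $Pu=0$ for $P:=-\Delta_{x,t}+\widetilde V$, $\widetilde V:=V+\lVert V\rVert_\infty+1$ with $\lVert\widetilde V\rVert_\infty\le2\lVert V\rVert_\infty+1$, while $u(\cdot,0)\equiv0$, $\partial_tu(\cdot,0)=\phi$, and $t\le\sinh t\le\euler^t$ ($t\ge0$) yields
\[
 \tfrac1{12}\lVert\phi\rVert_{L^2(\Lambda_L)}^2\le\lVert u\rVert_{L^2(\Lambda_L\times(-1/2,1/2))}^2\le\lVert u\rVert_{L^2(\Lambda_L\times(-1,1))}^2\le2\,\euler^{2\sqrt{b+\lVert V\rVert_\infty+1}}\lVert\phi\rVert_{L^2(\Lambda_L)}^2 .
\]
\emph{Reflection:} since the boundary condition defining $\Delta_L$ is Dirichlet, Neumann or periodic, $u$ and $\widetilde V$ extend by successive reflections across faces, edges and corners of $\Lambda_L$ to solutions of $Pu=0$ on $\widehat\Lambda\times(-1,1)$ with $\widehat\Lambda\supset\overline{\Lambda_L}$, $\operatorname{dist}(\partial\widehat\Lambda,\Lambda_L)\ge4$, with $\lVert\widetilde V\rVert_\infty$ unchanged and $L^2$-norms over sub-regions altered only by a factor $C(d)$. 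Henceforth only interior estimates are used.

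\emph{A local, $L$-independent unique continuation estimate.} Fix $j\in\ZZ^d$ with $(\Lambda_1+j)\cap\Lambda_L\ne\emptyset$. A Carleman estimate with a boundary term and a parabolic weight (\cite{JerisonL-99}, in the sharpened form of \cite{NakicRT-15}), applied near $\{t=0\}$ with Cauchy data $(u,\partial_tu)\vert_{t=0}=(0,\phi)$, gives for fixed $\rho=\rho(d)>0$ and $\theta=\theta(d)\in(0,1)$,
\[
 \lVert u\rVert_{L^2(B((\x_j,0),\rho))}\le\bigl(\tfrac C\delta\bigr)^{C(1+\lVert V\rVert_\infty^{2/3})}\lVert\phi\rVert_{L^2(B(\x_j,\delta))}^{\theta}\,\lVert u\rVert_{L^2(B((\x_j,0),2\rho))}^{1-\theta};
\]
an interior three-ball inequality for $P$ of Escauriaza--Vessella / Bourgain--Kenig type (\cite{EscauriazaV-03,BourgainK-05}, in the variant of \cite{NakicRT-15}, which is what yields the exponent $\lVert V\rVert_\infty^{2/3}$ without logarithmic loss) gives $\lVert u\rVert_{L^2(B(y,2r))}\le\euler^{C(1+\lVert V\rVert_\infty^{2/3})}\lVert u\rVert_{L^2(B(y,r))}^{s}\lVert u\rVert_{L^2(B(y,4r))}^{1-s}$ with $s=s(d)$. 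Chaining a $d$-dependent number of the latter along $\rho$-balls from $B((\x_j,0),\rho)$ to a $\rho$-net of $(\Lambda_1+j)\times(-1/2,1/2)$, with the path kept inside $(\Lambda_3+j)\times(-1,1)$, then inserting the first estimate and applying Young's inequality gives: for all $\epsilon\in(0,1]$,
\[
 \lVert u\rVert_{L^2(((\Lambda_1+j)\cap\Lambda_L)\times(-1/2,1/2))}^2\le\delta^{-C(1+\lVert V\rVert_\infty^{2/3})}\epsilon^{-C}\lVert\phi\rVert_{L^2(B(\x_j,\delta)\cap\Lambda_L)}^2+\epsilon\,\lVert u\rVert_{L^2((\Lambda_3+j)\times(-1,1))}^2,
\]
with $C=C(d)$ independent of $j,L,V,b$ (near $\partial\Lambda_L$ one reads $\partial_tu\vert_{t=0}$ off a reflection of $\phi\vert_{\Lambda_L}$).

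\emph{Scale-free summation.} Summing over the admissible $j$ --- the cubes $(\Lambda_1+j)\cap\Lambda_L$ tile $\Lambda_L$ up to a null set, the balls $B(\x_j,\delta)\cap\Lambda_L$ are disjoint in $W_\delta(L)$, and the cubes $\Lambda_3+j$ overlap with multiplicity $\le3^d$ --- and using the bounds of the first paragraph,
\[
 \tfrac1{12}\lVert\phi\rVert_{L^2(\Lambda_L)}^2\le\delta^{-C(1+\lVert V\rVert_\infty^{2/3})}\epsilon^{-C}\lVert\phi\rVert_{L^2(W_\delta(L))}^2+C_d\,\epsilon\,\euler^{2\sqrt{b+\lVert V\rVert_\infty+1}}\lVert\phi\rVert_{L^2(\Lambda_L)}^2.
\]
With $\epsilon:=(24C_d)^{-1}\euler^{-2\sqrt{b+\lVert V\rVert_\infty+1}}$ the last term is absorbed into the left-hand side, and since $\euler^{\sqrt{b+\lVert V\rVert_\infty+1}}\le\delta^{-C(1+\sqrt b+\lVert V\rVert_\infty^{2/3})}$ for $\delta\in(0,1/2)$, rearranging yields \eqref{eq:result1} with $C_\sfuc=\delta^{N(1+\lVert V\rVert_\infty^{2/3}+\sqrt b)}$ for a suitable $N=N(d)$.

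\emph{Main obstacle.} The crux is the local estimate of the second paragraph: a quantitative unique continuation bound on a \emph{single} unit cube whose constant is a clean power of $\delta$ with exponent $\lesssim1+\lVert V\rVert_\infty^{2/3}$ and, above all, \emph{independent of $L$}. Off-the-shelf Carleman estimates feed the diameter of the ambient domain into their constants, which here grows with $L$; the remedy --- localise to a unit cube and its neighbours, reflect away $\partial\Lambda_L$, and chain only boundedly many three-ball inequalities --- is exactly the multi-scale device from random Schr\"odinger operator theory, and making it compatible with Carleman estimates carrying fully explicit weights (neither the Jerison--Lebeau boundary estimate nor the Bourgain--Kenig interior estimate alone suffices, cf.\ \cite{NakicRT-15}) is the technical heart of the argument.
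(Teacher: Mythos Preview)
Your proposal is correct and follows essentially the same strategy as the paper: ghost-dimension lifting, reflection to a larger box, a boundary Carleman estimate near $\{t=0\}$ combined with an interior (Bourgain--Kenig type) Carleman estimate, and scale-free summation over unit cells. The packaging differs only in minor ways---you shift the potential so that all frequencies satisfy $\nu_k\ge1$ and use $\sinh$ throughout (the paper keeps $V$ unshifted and uses $\sinh$/linear/$\sin$ according to $\operatorname{sign}E_k$), you work in $L^2$ where the paper works in $H^1$, you chain three-ball inequalities where the paper applies one interior Carleman on nested annuli together with a covering lemma (Proposition~\ref{prop:interpolation2} and Lemma~\ref{lemma:geometric}), and you use the additive $\epsilon$-form rather than the equivalent multiplicative interpolation form; the paper also treats even $L$ separately via an auxiliary scaling $G\in[2,4]$, a point your sketch elides.
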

For $t,L > 0$ and a measurable and bounded $V : \RR^d \to \RR$ we define the Schr\"odinger operator $H_{t,L} = -t \Delta_L + V_L$ on $L^2 (\Lambda_L)$.
By scaling we obtain the following corollary\ifthenelse{\boolean{journal}}{}{, see Appendix~\ref{sec:proof:cor}}.%
\begin{corollary} \label{cor:result1}
Let $N = N(d)$ be the constant from Theorem~\ref{thm:result1}. Then, for all $G,t > 0$, all $\delta \in (0,G/2)$, all $(G,\delta)$-equidistributed sequences, all measurable and bounded $V: \RR^d \to \RR$, all $L \in G\NN$, all $b \geq 0$ and all $\phi \in \mathrm{Ran} (\chi_{(-\infty,b]}(H_{t,L}))$ we have
 \begin{equation*}
\lVert \phi \rVert_{L^2 (W_\delta (L))}^2
\geq C_{\sfuc}^{G,t} \lVert \phi \rVert_{L^2 (\Lambda_L)}^2
\end{equation*}
where
\begin{equation*}
C_{\sfuc}^{G,t} = C_{\sfuc}^{G,t} (d, \delta ,  b  , \lVert V \rVert_\infty )
:=  \left(\frac{\delta}{G} \right)^{N \bigl(1 + G^{4/3} \lVert V \rVert_\infty^{2/3}/t^{2/3} + G \sqrt{ b / t} \bigr)} .
\end{equation*}
\end{corollary}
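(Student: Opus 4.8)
The plan is to deduce Corollary~\ref{cor:result1} from Theorem~\ref{thm:result1} by a dilation of the cube that rescales the side length $L$, the Laplacian coefficient $t$ and the potential simultaneously. Introduce the unitary map $U \colon L^2(\Lambda_L) \to L^2(\Lambda_{L/G})$, $(U\phi)(y) = G^{d/2}\phi(Gy)$; it transforms the prescribed boundary condition on $\partial\Lambda_L$ into the same type of boundary condition on $\partial\Lambda_{L/G}$, separately in the Dirichlet, Neumann and periodic cases. A direct computation yields
\[
U H_{t,L} U^{-1} = \frac{t}{G^2}\,\tilde H, \qquad \tilde H := -\Delta_{L/G} + \tilde V_{L/G}, \qquad \tilde V(y) := \frac{G^2}{t}\,V(Gy),
\]
so that $\lVert\tilde V\rVert_\infty = (G^2/t)\lVert V\rVert_\infty$ and, by the spectral theorem, $U\,\chi_{(-\infty,b]}(H_{t,L})\,U^{-1} = \chi_{(-\infty,\,G^2b/t]}(\tilde H)$. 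In particular, $\phi \in \mathrm{Ran}(\chi_{(-\infty,b]}(H_{t,L}))$ if and only if $U\phi \in \mathrm{Ran}(\chi_{(-\infty,\,G^2b/t]}(\tilde H))$.

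Next I would verify that the geometric data rescale consistently with the hypotheses of Theorem~\ref{thm:result1}. Since $L \in G\NN$ we have $\tilde L := L/G \in \NN$, and since $\delta \in (0, G/2)$ we have $\tilde\delta := \delta/G \in (0, 1/2)$. Dividing the inclusion $B(z_j,\delta) \subset \Lambda_G + j$ by $G$ shows that the sequence $\tilde z_k := z_{Gk}/G$, $k \in \ZZ^d$, is $(1,\tilde\delta)$-equidistributed, and the corresponding equidistributed set is $W_{\tilde\delta}(\tilde L) = \{\,x/G : x \in W_\delta(L)\,\}$. Consequently, the change of variables $x = Gy$ (whose Jacobian contributes a factor $G^{-d}$ cancelling the $G^d$ from the normalization of $U$) gives $\lVert U\phi\rVert_{L^2(\Lambda_{\tilde L})} = \lVert\phi\rVert_{L^2(\Lambda_L)}$ and $\lVert U\phi\rVert_{L^2(W_{\tilde\delta}(\tilde L))} = \lVert\phi\rVert_{L^2(W_\delta(L))}$.

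Applying Theorem~\ref{thm:result1} to $U\phi \in \mathrm{Ran}(\chi_{(-\infty,\,G^2b/t]}(\tilde H))$ with the parameters $\tilde\delta$, $\tilde V$, $\tilde L$ and $\tilde b := G^2b/t$ then gives
\[
\lVert U\phi\rVert_{L^2(W_{\tilde\delta}(\tilde L))}^2 \geq \tilde\delta^{\,N\bigl(1 + \lVert\tilde V\rVert_\infty^{2/3} + \sqrt{\tilde b}\,\bigr)}\,\lVert U\phi\rVert_{L^2(\Lambda_{\tilde L})}^2 .
\]
Translating both norms back through $U$ and inserting $\tilde\delta = \delta/G$, $\lVert\tilde V\rVert_\infty^{2/3} = G^{4/3}\lVert V\rVert_\infty^{2/3}/t^{2/3}$ and $\sqrt{\tilde b} = G\sqrt{b/t}$ reproduces precisely the asserted inequality with constant $C_{\sfuc}^{G,t}$.

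Since everything reduces to a scaling computation, there is no genuine obstacle here; the only points demanding care are the invariance of the boundary condition under the dilation $U$, which must be checked for each admissible boundary condition, and the exact matching of the equidistribution parameters $(G,\delta) \leftrightarrow (1,\delta/G)$ together with the identification $W_{\tilde\delta}(\tilde L) = G^{-1}W_\delta(L)$, which is what makes the two $L^2$-norms transform by the same constant and hence cancel in the quotient appearing in the estimate.
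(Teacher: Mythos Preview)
Your argument is correct and follows essentially the same scaling approach as the paper's proof: both reduce to applying Theorem~\ref{thm:result1} after the dilation $y \mapsto Gy$, which converts $(G,\delta)$-equidistribution into $(1,\delta/G)$-equidistribution and replaces $(V,b)$ by $((G^2/t)V(G\cdot),\,G^2b/t)$. The only cosmetic difference is that you package the scaling via the unitary $U$ and the spectral theorem, whereas the paper composes each eigenfunction with $g(y)=Gy$ and tracks the eigenvalue equation directly; the content is identical.
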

Note that the set $W_\delta(L)$ depends on $G$ and the choice of the $(G,\delta)$-equidistributed sequence.
In particular, there is a constant $M = M (d,G,t) \geq 1$ such that
\begin{equation} \label{eq:sfuc}
C_{\sfuc}^{G,t} \geq \delta^{M \bigl(1 + \lVert V \rVert_\infty^{2/3} + \sqrt{\lvert b \rvert} \bigr)}.
\end{equation}
Note that Theorem~\ref{thm:result1} and Corollary~\ref{cor:result1} also hold for $b < 0$, since
\[
 \mathrm{Ran} (\chi_{(-\infty,b]}(H)) \subset \mathrm{Ran} (\chi_{(-\infty,0]}(H))
\]
for any self-adjoint operator $H$.

\begin{remark}[Previous results]
\label{r:previous}
If $L=G$ the result is closely related to doubling estimates and bounds on the vanishing order,
cf.~\cite{LebeauR-95,Kukavica-98,JerisonL-99,Bakri-13}. 
These results, however, do not study the dependence of the bound on geometric data,
e.g.\ the diameter of the domain or manifold.
In the context of random Schr\"odinger  operators
results like \eqref{eq:result1}
have been proven before under additional assumptions and using other methods:
For $V\equiv 0$ and energies close to the minimum of the spectrum in \cite{Kirsch-96} and \cite{BourgainK-05}; 
near spectral edges of periodic Schr\"odinger operators in \cite{KirschSS-98a};
and for periodic geometries $W_\delta$ and potentials in \cite{CombesHK-03}.
 More recently and using similar methods as we do, bounds like \eqref{eq:result1} have been established  for individual eigenfunctions in \cite{RojasMolinaV-13}.
 This has then been extended in \cite{Klein-13} to linear combinations of eigenfunctions of closeby eigenvalues.
 For more references and a broader discussion of the history see e.g.\ \cite{RojasMolinaV-13}, \cite{Klein-13}, or \cite{TaeuferTV-16}.
\end{remark}
As an application to spectral theory we have the following corollary. A proof is given at the end of Section~\ref{section:proof_main_result}.
\begin{corollary} \label{cor:eigenvalue}
Let $b , \alpha , G > 0$, $\delta \in (0,G/2)$, $L \in G \NN$ and $A_L,B_L : \Lambda_L \to \RR$ be measurable, bounded and assume that
\[
B_L \geq  \alpha \chi_{W_\delta(L)}
\]
for a $(G,\delta)$-equidistributed sequence.
Denote the eigenvalues of a self-adjoint operator $H$ with discrete spectrum by $\lambda_i(H)$, enumerated increasingly and counting multiplicities.
Then for all $i \in \NN$ with $\lambda_i(- \Delta + A_L + B_L) \leq b$, we have
\[
\lambda_i(-\Delta_L + A_L + B_L)
\geq
\lambda_i(- \Delta_L + A_L)
+
\alpha C_\sfuc^{G,1}(d,\delta,b,\lVert A_L + B_L \rVert_\infty ) .
\]
\end{corollary}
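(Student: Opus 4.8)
The plan is to combine the Courant--Fischer min-max principle with the scale-free unique continuation bound of Corollary~\ref{cor:result1}. Write $H_0 = -\Delta_L + A_L$ and $H_1 = H_0 + B_L = -\Delta_L + A_L + B_L$ (extending $A_L, B_L$ by, say, $0$ outside $\Lambda_L$ so that Corollary~\ref{cor:result1} is literally applicable). Since $A_L$ and $B_L$ are bounded, both operators are self-adjoint on $L^2(\Lambda_L)$ with a common form domain and purely discrete spectrum. Fix $i \in \NN$ with $\lambda_i(H_1) \leq b$ and let $\mathcal{V} = \mathrm{Ran}\bigl(\chi_{(-\infty,\lambda_i(H_1)]}(H_1)\bigr)$, a subspace of $L^2(\Lambda_L)$ of dimension at least $i$.

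The first step is to estimate the quadratic form of $H_0$ on $\mathcal{V}$. For $\phi \in \mathcal{V}$ with $\lVert \phi\rVert_{L^2(\Lambda_L)} = 1$, the pointwise lower bound $B_L \geq \alpha\chi_{W_\delta(L)}$ gives $\langle\phi, B_L\phi\rangle \geq \alpha\lVert\phi\rVert_{L^2(W_\delta(L))}^2$, hence
\[
\langle\phi, H_0\phi\rangle = \langle\phi, H_1\phi\rangle - \langle\phi, B_L\phi\rangle \leq \lambda_i(H_1) - \alpha\lVert\phi\rVert_{L^2(W_\delta(L))}^2 .
\]
Since $\lambda_i(H_1) \leq b$ we have $\mathcal{V} \subset \mathrm{Ran}\bigl(\chi_{(-\infty,b]}(H_1)\bigr)$, and $H_1 = -\Delta_L + (A_L + B_L)$ is exactly of the form treated by Corollary~\ref{cor:result1} with $t = 1$ and potential $A_L+B_L$; applying it yields $\lVert\phi\rVert_{L^2(W_\delta(L))}^2 \geq C_\sfuc^{G,1}(d,\delta,b,\lVert A_L+B_L\rVert_\infty)$, so that $\langle\phi, H_0\phi\rangle \leq \lambda_i(H_1) - \alpha C_\sfuc^{G,1}(d,\delta,b,\lVert A_L+B_L\rVert_\infty)$ uniformly over normalized $\phi \in \mathcal{V}$.

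To finish, I would apply the min-max principle to $H_0$: any $i$-dimensional subspace $W \subset \mathcal{V}$ is an admissible trial space, so
\[
\lambda_i(H_0) \leq \max_{\substack{\phi\in W\\\lVert\phi\rVert=1}}\langle\phi,H_0\phi\rangle \leq \max_{\substack{\phi\in\mathcal{V}\\\lVert\phi\rVert=1}}\langle\phi,H_0\phi\rangle \leq \lambda_i(H_1) - \alpha C_\sfuc^{G,1}(d,\delta,b,\lVert A_L+B_L\rVert_\infty),
\]
which rearranges to the asserted inequality. I do not expect a genuine obstacle: the entire substance is encapsulated in Corollary~\ref{cor:result1}, and the only mild care needed is (i) to note that a function in the spectral subspace of $H_1$ below $\lambda_i(H_1)$ also lies in $\mathrm{Ran}\bigl(\chi_{(-\infty,b]}(H_1)\bigr)$, so that the $b$-dependent unique continuation constant may be used — at the price of a possibly smaller, hence weaker, $C_\sfuc^{G,1}$, which is harmless for a lower bound — and (ii) that $\dim\mathcal{V}$ may exceed $i$, which is irrelevant to the min-max argument since one only needs the existence of one good $i$-dimensional trial space.
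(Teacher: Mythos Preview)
Your proof is correct and follows essentially the same route as the paper: use the variational (min-max) characterisation of $\lambda_i$ with the span of the first $i$ eigenfunctions of $H_1=-\Delta_L+A_L+B_L$ as trial space, bound $\langle\phi,B_L\phi\rangle$ from below via $B_L\ge\alpha\chi_{W_\delta(L)}$, and then invoke Corollary~\ref{cor:result1} for $H_1$ on the spectral subspace below $b$. Your explicit handling of the possible degeneracy of $\lambda_i(H_1)$ and of the extension of $A_L,B_L$ to $\RR^d$ are harmless technical refinements of the same argument.
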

%
\subsection{Application to random breather Schr\"odinger operators}
\label{ss:standard}
An important application of our result is in the spectral theory of random Schr\"odinger  operators.
The above scale-free unique continuation estimate is the key for proving the Wegner estimate formulated below,
which is a bound on the expected number of eigenvalues in a short energy interval of a finite box restriction of our random Hamiltonian.
Together with a so-called initial scale estimate, Wegner estimates facilitate a proof of
Anderson localization via multi-scale analysis.
For more background on multi-scale analysis \& localization and on Wegner estimates consult
e.g.\ the monographs \cite{Stollmann-01} and \cite{Veselic-08}, respectively.
\par
The main point is that the potentials we are dealing with here exhibit a \emph{non-linear dependence} on the random parameters $\omega_j$.
Due to this challenge, previously established versions of  \eqref{eq:result1}, as discussed in Remark \ref{r:previous}, are not sufficiently precise to be applied to such models.
We emphasize that our scale-free unique continuation principle and Wegner estimate are valid for all bounded energy intervals, not only near the bottom of the spectrum.
\par
Let us introduce a simple, but paradigmatic example of the models we are considering. (The general case will be studied in the next paragraph.)
\par
Let $\mathcal{D} $ be a countable set to be specified later.
For $0 \leq \omega_{-} < \omega_{+} < 1$ we define the probability space $(\Omega , \mathcal{A} , \PP)$ with
\[
 \Omega = \bigtimes_{j \in \mathcal{D}} \RR , \quad
 \mathcal{A} = \bigotimes_{j \in \mathcal{D}} \mathcal{B} (\RR)
 \quad \text{and} \quad
 \PP = \bigotimes_{j \in \mathcal{D}} \mu ,
\]
where $\mathcal{B}(\RR)$ is the Borel $\sigma$-algebra and $\mu$ is a probability measure with $\mathrm{supp}\ \mu \subset [\omega_{-}, \omega_{+}]$ and a bounded density $\nu_{\mu}$.
Hence, the projections $\omega \mapsto \omega_k$ give rise to a sequence of independent and identically distributed random variables $\omega_j$, $j \in \mathcal{D}$.
We denote by $\EE$ the expectation with respect to the measure $\PP$.
\par
The standard random breather model is defined as
\begin{equation}\label{eq:standardRBP}
H_\omega = -\Delta + V_\omega(x), \qquad \text{ with }     V_\omega(x) = \sum_{j \in \ZZ^d} \chi_{B_{\omega_j}}(x-j)
\end{equation}
and the restriction of $H_\omega$ to the box $\Lambda_L$ by $H_{\omega,L} $. Here obviously $\mathcal{D} =\ZZ^d$.
Denote by $\chi_{[E- \epsilon, E + \epsilon]}$ the spectral projector of  $H_{\omega,L} $.
We formulate now a version of our general Theorem \ref{thm:wegner}
applied to the standard random breather model.
\begin{theorem}[Wegner estimate for the standard random breather model]\label{thm:Wegner}
Assume that  $[\omega_{-}, \omega_{+}] \subset [0,1/4]$, fix $E_0 \in \RR$,
and set $ \epsilon_{\max} = \frac{1}{4}\cdot  8^{-N(2+{\lvert E_0+1 \rvert}^{1/2})}$,
where $N$ is the constant from Theorem~\ref{thm:result1}.
Then there is  $C=C(d,E_0) \in (0,\infty)$ such that for all $\epsilon \in (0,\epsilon_{\max}]$ and $E \geq 0$ with
$[E-\epsilon, E+\epsilon] \subset (- \infty , E_0]$,
we have
\begin{equation*}
\EE \left[ \mathrm{Tr} \left[ \chi_{[E- \epsilon, E + \epsilon]}(H_{\omega,L}) \right] \right]
\leq
C
\lVert \nu \rVert_\infty
\epsilon^{[N(2+{\lvert E_0 + 1 \rvert}^{1/2})]^{-1}}
\left\lvert\ln \epsilon \right\rvert^d L^d.
\end{equation*}
\end{theorem}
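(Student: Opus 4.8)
The plan is to derive the Wegner estimate from the scale-free unique continuation principle (Theorem~\ref{thm:result1}, in the form of Corollary~\ref{cor:result1}) by the now-standard scheme: convert the spectral inequality into a lower bound on the derivative of eigenvalues with respect to the random parameters, then integrate in the parameters. The main obstacle, which distinguishes the present situation from classical Wegner proofs, is precisely the \emph{non-linear} dependence of $V_\omega$ on the $\omega_j$: the map $\omega_j \mapsto \chi_{B_{\omega_j}}(x-j)$ is not affine, so one cannot simply differentiate under the integral and recognize a positive operator. The key observation is that, although $\partial_{\omega_j} V_\omega$ does not exist pointwise as a function, the difference quotient in $\omega_j$ is, in the sense of quadratic forms, bounded below by a positive multiple of $\chi_{B(j,\delta)}$ for a suitable $\delta$, uniformly over the admissible parameter range $[\omega_-,\omega_+]\subset[0,1/4]$; this is where the restriction $\omega_+\le 1/4$ and the definition of $\epsilon_{\max}$ enter.

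Concretely, first I would fix $E_0$ and note that all energies of interest lie in $(-\infty,E_0]$, so $\lVert V_\omega\rVert_\infty \le 1$ (the breather bumps have height one and, for $\omega_+\le 1/4$, supports $B_{\omega_j}(x-j)$ that do not overlap between different sites), hence Corollary~\ref{cor:result1} applies with $t=1$, $G=1$, and a fixed $\delta = \delta(E_0)$ chosen so that $8^{-N(2+|E_0+1|^{1/2})}$ is exactly $4\epsilon_{\max}$; this yields a uniform constant $C_\sfuc^{1,1}\ge \epsilon_{\max}^{[N(2+|E_0+1|^{1/2})]^{-1}}\cdot(\text{const})$ controlling $\lVert\phi\rVert^2$ on the union of $\delta$-balls for every $\phi$ in the relevant spectral subspace. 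Second, I would carry out the finite-rank perturbation / eigenvalue-lifting step: using Corollary~\ref{cor:eigenvalue} (or a direct first-order eigenvalue perturbation argument), an increase of each $\omega_j$ by a common amount $\tau>0$ shifts every eigenvalue of $H_{\omega,L}$ in $[E-\epsilon,E+\epsilon]$ upward by at least $c\,\tau\,C_\sfuc$, because the monotone increase of the potential dominates a positive multiple of $\chi_{W_\delta(L)}$ and the spectral inequality transfers this to a genuine eigenvalue gap; here monotonicity in $\omega_j$ of the breather potential is essential and must be checked for $\omega\in[0,1/4]$.

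Third, I would implement Combes--Hislop--Klopp-style spectral averaging. Writing $P_\omega = \chi_{[E-\epsilon,E+\epsilon]}(H_{\omega,L})$, one estimates $\EE[\mathrm{Tr}\,P_\omega]$ by introducing the vector field $\sum_j \partial_{\omega_j}$, bounding $\mathrm{Tr}\,P_\omega$ by the variation of a switch function of $H_{\omega,L}$ along the flow of this vector field, and integrating by parts against the bounded density $\nu$; the eigenvalue-lifting bound from the previous step controls the relevant transport term, producing a factor $\epsilon/C_\sfuc$ times $\lVert\nu\rVert_\infty$ times the number of random parameters affecting $\Lambda_L$, which is $O(L^d)$. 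The logarithmic factor $|\ln\epsilon|^d$ arises from summing the contributions of sites near the boundary of $\Lambda_L$, or more precisely from the fact that the number of lattice sites $j$ with $B_{\omega_j}(\cdot-j)$ meeting a given small scale must be counted with a mild loss; this bookkeeping, together with tracking how $C_\sfuc$ depends on $E_0$, is the routine but somewhat delicate part. Finally, I would invoke the general Theorem~\ref{thm:wegner} (not reproduced in the excerpt) of which this is the announced special case, and simply verify that the standard breather model satisfies its hypotheses with the stated constants; this reduces the proof to a checklist. I expect the genuinely novel difficulty to be the quadratic-form lower bound on the $\omega_j$-difference quotient of the non-linear potential, since everything downstream is an adaptation of known machinery.
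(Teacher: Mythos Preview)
Your overall architecture (scale-free UCP $\Rightarrow$ eigenvalue lifting $\Rightarrow$ spectral averaging with a switch function) matches the paper's, but two of the three concrete steps you describe contain genuine errors, and the mechanism producing the final exponent and the logarithmic factor is not what you say it is.

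\textbf{The eigenvalue lifting is not linear in the shift.} You claim that increasing every $\omega_j$ by $\tau$ lifts each eigenvalue by at least $c\,\tau\,C_\sfuc$ with a \emph{fixed} $C_\sfuc$ coming from a fixed ball radius $\delta=\delta(E_0)$. This is false for the breather potential. The difference $V_{\omega+\tau}-V_\omega$ is the characteristic function of the union of annuli $B_{\omega_j+\tau}\setminus B_{\omega_j}$, each of width $\tau$; such an annulus contains a ball of radius at most $\tau/2$, and it does \emph{not} dominate $c\tau\,\chi_{B(x_0,\delta)}$ for any fixed $\delta$ once $\tau<\delta$. Hence Corollary~\ref{cor:eigenvalue} must be applied with ball radius $\sim\tau$, which forces $C_\sfuc\sim\tau^{\kappa}$ and yields only $\lambda_i(H_{\omega+\tau,L})\ge\lambda_i(H_{\omega,L})+\tau^{\kappa}$ (this is exactly Ineq.~\eqref{eq:eigenvalue_lifting_kappa} in the paper). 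One then sets $\tau=(4\epsilon)^{1/\kappa}$ so that eigenvalues move by $4\epsilon$, and it is this choice that produces the sublinear Wegner exponent $\epsilon^{1/\kappa}$; your computation ``$\epsilon/C_\sfuc$'' would give a linear Wegner estimate, which the argument does not deliver.

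\textbf{The averaging step is a finite-shift telescoping, not a vector-field/integration-by-parts argument.} Because $\omega_j\mapsto\chi_{B_{\omega_j}}$ is not differentiable in $L^\infty$, the paper never forms $\sum_j\partial_{\omega_j}$. Instead it bounds $\chi_{[E-\epsilon,E+\epsilon]}$ by a difference of smooth switch functions $\rho$, uses the $4\epsilon$ eigenvalue shift to replace the inner argument by $H_{\omega+\tau,L}$, and then writes $\rho(H_{\omega+\tau,L})-\rho(H_{\omega,L})$ as a telescopic sum over sites, changing one $\omega_j$ at a time. Each single-site increment is handled by the elementary Lemma~\ref{lemma:Wegner1} (integrating a monotone bounded function against a bounded density), which yields the factor $\lVert\nu_\mu\rVert_\infty\cdot\tau$, and then by a spectral-shift-function bound (Proposition~\ref{Krein}) controlling $\mathrm{Tr}[\rho(H_1)-\rho(H_0)]$ for a compactly supported perturbation.

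\textbf{The origin of $|\ln\epsilon|^d$.} This factor has nothing to do with boundary sites or counting lattice points at a small scale. It comes from the spectral shift function estimate: Proposition~\ref{Krein} gives a bound of the form $K_1\euler^{b}+K_2(\ln(1+\lVert\rho'\rVert_\infty))^d\lVert\rho'\rVert_1$, and the switch function has $\lVert\rho'\rVert_\infty\le 1/\epsilon$, $\lVert\rho'\rVert_1=1$, producing $(\ln(1+1/\epsilon))^d\le(2|\ln\epsilon|)^d$. The number of sites contributes only the factor $L^d$.

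Finally, your last paragraph is the correct shortcut: Theorem~\ref{thm:Wegner} is literally the specialization of Theorem~\ref{thm:wegner} to $u_t=\chi_{B_t}$, and the checklist consists of verifying condition~\eqref{eq:condition_u} (this is case~(B) in Appendix~\ref{app:assumption_single_site}) and reading off the constants.
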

Theorem \ref{thm:Wegner} implies local H\"older continuity  of the integrated density of states (IDS) and
is sufficient for the multi-scale analysis proof of spectral localization, see the next paragraph.
 \begin{remark}[Previous results on the random breather model]
 The paper \cite{CombesHM-96} introduced random breather potentials,
while a Wegner estimate was proven in \cite{CombesHN-01},
 however excluding any bounded and any continuous single site potential, cf.~Appendix \ref{app:assumption_single_site}.
Lifshitz tails for random breather Schr{\"o}dinger operators were proven in \cite{KirschV-10}.
All of the papers mentioned so far approached the breather model using techniques which have been developed for the alloy type model.
Consequently, at some stage the non-linear dependence on the random variables was linearised, giving rise to certain differentiability conditions.
As a result, characteristic functions of cubes or balls which would be the most basic example one can think of were excluded as single-site potentials.
Only \cite{Veselic-07a} considers a simple non-differentiable example, namely the standard random breather potential in one dimension, and proves a Lifshitz tail estimate.
 \end{remark}
\subsection{More general non-linear models and localization}
We formulate now a Wegner estimate for a general class of models, which includes the standard random breather potential,
considered in the last paragraph as a special case. We state also an initial scale estimate which implies localization.
\par
Here, in the general setting, we assume that $\mathcal{D} \subset \RR^d$ is  a Delone set, i.e.\ there are $0< G_1<G_2$ such that for any $x \in \RR^d$, we have $\lvert \{ \mathcal{D} \cap (\Lambda_{G_1} + x) \} \rvert \leq 1$ and $\lvert \{ \mathcal{D} \cap (\Lambda_{G_2} + x) \} \rvert \geq 1$. Here, $| \cdot |$ stands for the cardinality. In other words, Delone sets are relatively dense and uniformly discrete subsets of $\mathbb{R}^d$. 
For more background about Delone sets, see, for example, the contributions in  \cite{KellendonkLS-15}.
The reader unacquainted with the concept of a Delone set can always think of $\mathcal{D} = \ZZ^d$.
\par
Furthermore, let $\{ u_t : t \in [0,1] \} \subset L_0^\infty(\RR^d)$ be functions such that there are $G_u \in \NN$, $u_{\max} \geq 0$, $\au, \bu> 0$ and $\ao, \bo \geq 0$ with
\begin{equation} \label{eq:condition_u}
 \begin{cases}
    \displaystyle{ \forall  t \in [0,1]:\ \supp u_t \subset \Lambda_{G_u}} ,\\
    \displaystyle{ \forall  t \in [0,1]:\ \lVert u_t \rVert_\infty \leq u_{\max}},\\
    \displaystyle{ \forall  t \in [\omega_{-}, \omega_{+}],\ \delta \leq 1 - \omega_{+}:\
	    \exists x_0 \in \Lambda_{G_u}:\
		u_{t + \delta} - u_t \geq \au  \delta^{\ao} \chi_{B(x_0, \bu  \delta^{\bo}) }} .
 \end{cases}
\end{equation}
We define the family of Schr\"odinger operators $H_\omega$, $\omega \in \Omega$, on $L^2(\RR^d)$ given by
\[
 H_\omega := - \Delta + V_\omega \quad \text{where} \quad V_\omega(x) = \sum_{j \in \mathcal{D}} u_{\omega_j} (x-j) .
\]
Note that for all $\omega \in [0,1]^{\mathcal{D}}$ we have $\lVert V_\omega \rVert_\infty \leq K_u := u_{\max}  \lceil G_u/G_1 \rceil^d$, c.f.\ Lemma \ref{lem:wegner}.
Assumption \eqref{eq:condition_u} includes many prominent models of random Schr\"o\-ding\-er operators - linear and non-linear. We give some examples.
 \begin{description}[\setleftmargin{0pt}\setlabelstyle{\bfseries}]
  \item[Standard random breather model:]Let $\mu$ be the uniform distribution on $[0, 1/4]$ and let $u_t(x) = \chi_{B(0, t)}$, $j \in \ZZ^d$. Then $V_\omega = \sum_{j \in \ZZ^d} \chi_{B(j,\omega_j)}$ is the characteristic function of a disjoint union of balls with random radii. 
  Such models were introduced  in \S \ref{ss:standard}.
  \item[General random breather models]
  Let $0 \leq u \in L_0^\infty(\RR^d)$ and define $u_t(x) := u(x/t)$ for $t > 0$ and $u_{j,0} :\equiv 0$, $j \in \ZZ^d$, and assume that the family $\{ u_t : t \in [0,1] \}$ satisfies \eqref{eq:condition_u}. 
Natural examples are discussed in Appendix~\ref{app:assumption_single_site}.
 Then $V_\omega(x) = \sum_{j \in \ZZ^d} u_{\omega_j}(x-j)$ is a sum of random dilations of a single-site potential $u$ at each lattice site $j \in \ZZ^d$.
  \item[Alloy type model]
  Let $0 \leq u \in L_0^\infty(\RR^d)$, $u \geq \alpha > 0$ on some open set and let $u_t(x) := t u(x)$. Then $V_\omega(x) = \sum_{j \in \ZZ^d} \omega_j u(x-j)$ is a sum of copies of $u$ at all lattice sites $j \in \ZZ^d$, multiplied with $\omega_j$.
  \item[Delone-alloy type model]
  Let $\mathcal{D} \subset \RR^d$ be a Delone set, $0 \leq u \in L_0^\infty(\RR^d)$, $u \geq \alpha > 0$ on some nonempty open set and let $u_t(x) := t u(x)$. Then $V_\omega(x) = \sum_{j \in \mathcal{D}} \omega_j u(x-j)$ is a sum of copies of $u$ at all lattice sites $j \in \mathcal{D}$, multiplied with $\omega_j$.
  See \cite{GerminetMRM-15} and the references therein for background on such models.
 \end{description}
 For $L > 0$ we denote by $H_{\omega , L}$ the restriction of $H_\omega$ to $L^2 (\Lambda_L)$ with Dirichlet boundary conditions. Following the methods developed in \cite{HundertmarkKNSV-06}, we obtain a Wegner estimate under our general assumption \eqref{eq:condition_u}.
\begin{theorem}[Wegner estimate]\label{thm:wegner}
For all $b \in \RR$ there are constants $C,\kappa,\epsilon_{\max} > 0$, depending only on $d$, $b$, $K_u$, $G_u$, $G_2$, $\au$, $\ao$, $\bu$, $\bo$, $\omega_{+}$ and $\lVert \nu_\mu \rVert_\infty$, such that for all $L \in (G_2 +G_u) \NN$, all $E \in \RR$ and all $\epsilon \leq \epsilon_{\max}$ with $[E - \epsilon, E + \epsilon] \subset (- \infty,b]$ we have
 \begin{equation}
  \EE \left[ \mathrm{Tr} \left[ \chi_{( - \infty, b]} (H_{\omega, L}) \right] \right] \leq C
  \epsilon^{1/\kappa}
  \left\lvert\ln \epsilon \right\rvert^d
   L^d.
 \end{equation}
\end{theorem}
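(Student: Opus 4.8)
The plan is to deduce Theorem~\ref{thm:wegner} from the scale-free unique continuation estimate (Corollary~\ref{cor:result1}) combined with the spectral averaging / partition-of-unity machinery of \cite{HundertmarkKNSV-06}. First I would fix $b\in\RR$ and observe that on the box $\Lambda_L$ one has $\lVert V_{\omega,L}\rVert_\infty \le K_u$ uniformly in $\omega\in[0,1]^{\mathcal D}$, so that all operators $H_{\omega,L}$ relevant for energies in $(-\infty,b]$ have potentials bounded by $K_u$; this makes the constant $C_\sfuc$ from Corollary~\ref{cor:result1} uniform in $\omega$ and in $L$. The key analytic input is a positivity estimate in expectation: for $\phi$ in the range of $\chi_{(-\infty,b]}(H_{\omega,L})$ one needs a lower bound of the form $\sum_{j\in\mathcal D\cap\Lambda_L}\partial_{\omega_j}\langle\phi, V_\omega\,\phi\rangle \ge c\,\lVert\phi\rVert^2$ with $c$ independent of $L$. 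This is where assumption~\eqref{eq:condition_u} enters: the monotonicity condition $u_{t+\delta}-u_t \ge \au\delta^{\ao}\chi_{B(x_0,\bu\delta^{\bo})}$ lets one bound a finite difference in $\omega_j$ from below by a characteristic function of a small ball, and summing over $j\in\mathcal D\cap\Lambda_L$ (using the Delone property to get an honest $(G,\delta')$-equidistributed family of balls, with $\delta'\sim\bu\delta^{\bo}$ and $G\sim G_2+G_u$) produces exactly the set $W_{\delta'}(L)$ appearing in Corollary~\ref{cor:result1}. Hence $\sum_j\big(\langle\phi,u_{\omega_j+\delta}(\cdot-j)\phi\rangle-\langle\phi,u_{\omega_j}(\cdot-j)\phi\rangle\big) \ge \au\delta^{\ao}\,C_\sfuc^{G,1}(d,\delta',b,K_u)\lVert\phi\rVert^2$.

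Next I would feed this into the standard Wegner scheme. One shifts all random variables simultaneously by $t\mapsto\omega_j+t$; the lower bound just obtained says that as $t$ increases by $\delta$, the relevant eigenvalues of $H_{\omega+t\mathbf 1,L}$ move up by at least $\au\delta^{\ao}C_\sfuc^{G,1}\lVert\phi\rVert^2$, i.e.\ one gets quantitative eigenvalue lifting exactly as in Corollary~\ref{cor:eigenvalue}. Comparing the trace of the spectral projector onto $[E-\epsilon,E+\epsilon]$ before and after such a shift, and integrating the shift parameter against the (bounded) density $\nu_\mu$ — the Chebyshev/spectral-shift argument of \cite{HundertmarkKNSV-06,CombesHK-07} — converts the lifting estimate into the bound $\EE[\mathrm{Tr}\,\chi_{[E-\epsilon,E+\epsilon]}(H_{\omega,L})] \le C'\lVert\nu_\mu\rVert_\infty\,\frac{\epsilon}{\au\delta^{\ao}C_\sfuc^{G,1}}\,N_L$, where $N_L$ counts eigenvalues below $b+1$, say. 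The final ingredient is the deterministic eigenvalue-counting bound $N_L \le C''(d,b,K_u)\,L^d$ (a Weyl-type / Cwikel–Lieb–Rozenblum estimate for $-\Delta_L+V_{\omega,L}$ with $\lVert V\rVert_\infty\le K_u$), which supplies the $L^d$ factor.

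The $\lvert\ln\epsilon\rvert^d$ factor and the optimal power $\epsilon^{1/\kappa}$ come from optimizing the free parameter $\delta$: since $C_\sfuc^{G,1}\sim(\delta'/G)^{M(1+K_u^{2/3}+\sqrt b)}$ with $\delta'\sim\bu\delta^{\bo}$, the bound has the schematic shape $\epsilon\,\delta^{-\ao}\,\delta^{-c\bo}$ up to constants, and it is essentially decreasing in $\delta$ only after one also accounts for the constraint $\delta\le 1-\omega_+$ and for the number of lattice translations needed to cover $[0,1]$ by intervals of length $\delta$ — that covering, carried out coordinatewise, is what generates the $\lvert\ln\epsilon\rvert^d$. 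Choosing $\delta$ a suitable power of $\epsilon$ balances the two and yields $\epsilon^{1/\kappa}\lvert\ln\epsilon\rvert^d$ with $\kappa$ depending on $d,b,K_u,\ao,\bo$ and the other model parameters; one then sets $\epsilon_{\max}$ so small that this chosen $\delta$ still satisfies $\delta'<G/2$ and $\delta\le 1-\omega_+$.

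I expect the main obstacle to be the \emph{non-linear} dependence of $V_\omega$ on $\omega_j$: one cannot differentiate in $\omega_j$ and must work with finite differences throughout, which is precisely why the earlier scale-free UCP results (Remark~\ref{r:previous}) do not suffice and why assumption~\eqref{eq:condition_u} is formulated with the explicit lower bound by a ball. Keeping all constants uniform in $L$ as the balls $B(x_0,\bu\delta^{\bo})$ get translated across the multi-scale box — i.e.\ verifying that the translated centers really form a $(G,\delta')$-equidistributed sequence for the \emph{same} $G$ regardless of $L$, using only the Delone bounds $G_1,G_2$ and $\supp u_t\subset\Lambda_{G_u}$ — is the delicate bookkeeping step; once that is in place, invoking Corollary~\ref{cor:result1} makes the scale-freeness automatic and the rest is the by-now-standard Wegner argument.
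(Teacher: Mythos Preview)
Your high-level plan is right and matches the paper's: use assumption~\eqref{eq:condition_u} together with the Delone property to show that $V_{\omega+\delta}-V_\omega$ dominates $\au\delta^{\ao}$ times the indicator of a $(G_2+G_u,\bu\delta^{\bo})$-equidistributed union of balls, invoke Corollary~\ref{cor:eigenvalue} to obtain the lifting $\lambda_i(H_{\omega+\delta,L})\ge\lambda_i(H_{\omega,L})+\delta^\kappa$, and then run the scheme of \cite{HundertmarkKNSV-06}. You also correctly anticipate the choice $\delta\sim\epsilon^{1/\kappa}$ and the bookkeeping needed to make the balls honestly $(G,\delta')$-equidistributed.

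Where your account goes wrong is in the attribution of the two factors $L^d$ and $|\ln\epsilon|^d$, and this reflects a genuine gap in how the \cite{HundertmarkKNSV-06} argument actually runs. The $L^d$ does \emph{not} come from a Weyl/CLR eigenvalue count $N_L$; it comes from the number of sites $j\in\mathcal D$ whose single-site potentials touch $\Lambda_L$. One writes $\rho(H_{\omega+\delta,L}-E-2\epsilon)-\rho(H_{\omega,L}-E-2\epsilon)$ as a \emph{telescopic sum} of $\lvert\tilde\Lambda_L\rvert\lesssim (L/G_1)^d$ one-site increments $\Theta_n(\omega_{k(n)}+\delta)-\Theta_n(\omega_{k(n)})$ and integrates each against its own marginal $\nu_\mu$, producing a factor $\lVert\nu_\mu\rVert_\infty\,\delta$ per site. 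The $|\ln\epsilon|^d$ does \emph{not} come from any covering of $[0,1]$ by intervals of length $\delta$ --- such a covering would contribute $1/\delta$, not $\ln(1/\delta)$. It comes from the spectral-shift-function bound (Proposition~\ref{Krein} in the paper, i.e.\ Theorem~2 of \cite{HundertmarkKNSV-06}), which controls each increment $\Theta_n(\omega_++\delta)-\Theta_n(\omega_-)$ by $K_1\euler^b+K_2(\ln(1+\lVert g'\rVert_\infty))^d$ with $\lVert g'\rVert_\infty\le 1/\epsilon$.

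Consequently your intermediate expression $C'\lVert\nu_\mu\rVert_\infty\,\epsilon/(\au\delta^{\ao}C_\sfuc)\cdot N_L$ is a hybrid of two incompatible schemes: the pre-SSF approach (which uses $N_L$ but yields no logarithm and, combined with a site sum, would give an extra power of $L$) and the SSF approach. There is no subsequent optimisation in $\delta$ balancing two competing terms; once the telescopic sum and SSF bound are in place one simply sets $\delta=(4\epsilon)^{1/\kappa}$ to guarantee the lifting by $4\epsilon$ and reads off $\lVert\nu_\mu\rVert_\infty\cdot\delta\cdot|\ln\epsilon|^d\cdot\lvert\tilde\Lambda_L\rvert \lesssim \epsilon^{1/\kappa}|\ln\epsilon|^d L^d$.
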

\begin{theorem}[Initial scale estimate] \label{thm:initial}
Let $\kappa$ be as in Theorem~\ref{thm:wegner} for $b = d\pi^2 + K_u$. Assume that there are $t_0,C > 0$ such that
\[
 0 \in \supp \mu
 \quad \text{and} \quad
 \forall t \in [0,t_0] \colon \quad \mu ([0,t]) \leq C t^{d\kappa} .
\]
Then there is $L_0 = L_0 (t_0 , \delta_{\mathrm{max}} , \kappa,  G_u , G_1) \geq 1$ such that for all $L \in (G_2 + G_u) \NN$, $L \geq L_0$ we have
\[
 \PP \left(\left\{ \omega \in \Omega \colon \lambda_1 (H_{\omega , L}) - \lambda_1 (H_{0 , L}) \geq \frac{1} {L^{3/2}} \right\}  \right) \geq 1 - \frac{C}{L^{d/2}} ,
\]
where $H_{0,L}$ is obtained from $H_{\omega , L}$ by setting $\omega_j$ to zero for all $j \in \mathcal{D}$.
\end{theorem}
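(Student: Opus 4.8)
The plan is to combine the eigenvalue lifting bound of Corollary~\ref{cor:eigenvalue} with an elementary large‑deviation estimate built on the tail hypothesis $\mu([0,t])\le Ct^{d\kappa}$. With $b:=d\pi^2+K_u$ as in the statement and $\kappa$ the corresponding Wegner exponent from Theorem~\ref{thm:wegner}, I would set $G:=G_2+G_u$ and let $P:=N(1+G^{4/3}K_u^{2/3}+G\sqrt b)$ be the exponent occurring in $C_\sfuc^{G,1}(d,\cdot,b,K_u)$ (cf.\ Corollary~\ref{cor:result1}); I note that $\kappa>\ao+P\bo$, since $\kappa$ is built precisely from the eigenvalue lifting exponent inside the proof of Theorem~\ref{thm:wegner}. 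Recall that $\Lambda_L$ is exactly tiled by the cubes $\Lambda_G+k$, $k\in(G\ZZ)^d$, for $L\in G\NN$. Everything will hinge on a scale‑dependent threshold $t^\ast=t^\ast(L)\asymp L^{-3/(2\kappa)}$; in particular $t^\ast\to 0$, so $t^\ast\le\min\{t_0,\,1-\omega_{+}\}$ and $\bu(t^\ast)^{\bo}<G/2$ once $L$ is large.

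For the probabilistic half I would introduce the event $\mathcal{E}_L:=\{\omega:\ \omega_j\ge t^\ast\text{ for all }j\in\mathcal{D}\cap\Lambda_L\}$. Uniform discreteness of $\mathcal{D}$ gives $\lvert\mathcal{D}\cap\Lambda_L\rvert\le(L/G_1+1)^d$, and since the $\omega_j$ are i.i.d.\ with $\mu([0,t^\ast])\le C(t^\ast)^{d\kappa}$, Bernoulli's inequality yields
\[
\PP(\mathcal{E}_L)\ \ge\ \bigl(1-C(t^\ast)^{d\kappa}\bigr)^{(L/G_1+1)^d}\ \ge\ 1-C(t^\ast)^{d\kappa}(L/G_1+1)^d,
\]
which is $\ge 1-CL^{-d/2}$ as soon as $(t^\ast)^{d\kappa}(L/G_1+1)^d\le L^{-d/2}$; with $t^\ast\asymp L^{-3/(2\kappa)}$ this holds for all large $L$.

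For the deterministic half I would show that on $\mathcal{E}_L$ the ground state is lifted by $L^{-3/2}$. In each tile $\Lambda_G+k$ the Delone property furnishes a site $j_k\in\mathcal{D}$ with $\Lambda_{G_u}+j_k\subset\Lambda_G+k$; since $\omega_{j_k}\ge t^\ast$, the increment bound in \eqref{eq:condition_u} applied with $t=0$ (recall $\omega_{-}=0$, as $0\in\supp\mu$) and increment $t^\ast\le 1-\omega_{+}$ provides a point $z_k$ with $B(z_k,\bu(t^\ast)^{\bo})\subset\Lambda_{G_u}+j_k$ and $(u_{\omega_{j_k}}-u_0)(\cdot-j_k)\ge\au(t^\ast)^{\ao}\chi_{B(z_k,\bu(t^\ast)^{\bo})}$, while chaining the same bound shows $u_{\omega_j}\ge u_0$ for every $j$, so no cancellation occurs between tiles. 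Exactly as in the proof of the Wegner estimate (Lemma~\ref{lem:wegner}) this yields, with $\delta^\ast:=\bu(t^\ast)^{\bo}$ and the $(G,\delta^\ast)$‑equidistributed sequence $(z_k)_k$, the bound $V_{\omega,L}-V_{0,L}\ge\au(t^\ast)^{\ao}\chi_{W_{\delta^\ast}(L)}$. Since $\lambda_1(H_{\omega,L})\le\lambda_1(-\Delta_L)+\lVert V_{\omega,L}\rVert_\infty\le d\pi^2+K_u=b$ for $L\ge 1$ and $\lVert V_{\omega,L}\rVert_\infty\le K_u$, Corollary~\ref{cor:eigenvalue} (with $i=1$, $A_L=V_{0,L}$, $B_L=V_{\omega,L}-V_{0,L}$) and the monotonicity of $C_\sfuc^{G,1}$ in its last argument give
\[
\lambda_1(H_{\omega,L})-\lambda_1(H_{0,L})\ \ge\ \au(t^\ast)^{\ao}\,C_\sfuc^{G,1}(d,\delta^\ast,b,K_u)\ =\ \au\Bigl(\tfrac{\bu}{G}\Bigr)^{P}(t^\ast)^{\ao+P\bo}.
\]

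It then remains to balance the two sides. With $t^\ast\asymp L^{-3/(2\kappa)}$ the probabilistic constraint above is met, so $\PP(\mathcal{E}_L)\ge 1-CL^{-d/2}$, while the right‑hand side of the last display is $\gtrsim L^{-3(\ao+P\bo)/(2\kappa)}$, which exceeds $L^{-3/2}$ for all large $L$ precisely because $\kappa>\ao+P\bo$; fixing $L_0$ accordingly (and large enough that $t^\ast\le t_0$) completes the argument. I expect this balancing to be the only genuine obstacle: $t^\ast$ must be large enough that the scale‑free UCP constant $C_\sfuc^{G,1}(d,\bu(t^\ast)^{\bo},b,K_u)$ — a fixed power of $t^\ast$ — dominates the polynomially small target $L^{-3/2}$, yet small enough that all $\asymp L^d$ relevant couplings exceed it with probability at least $1-CL^{-d/2}$; the two demands are reconcilable exactly because the tail exponent $d\kappa$ in the hypothesis on $\mu$ is coupled to the Wegner exponent $\kappa$, which dominates $\ao+P\bo$. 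The remaining geometric input — extracting an equidistributed sequence from \eqref{eq:condition_u} and the Delone structure — is routine and already contained in Lemma~\ref{lem:wegner}.
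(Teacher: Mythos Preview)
Your approach is essentially the paper's: condition on the event that all relevant couplings exceed a threshold $t^\ast\sim L^{-3/(2\kappa)}$, lift the ground state on this event via the scale-free UCP, and bound the complementary probability by a union bound together with the tail hypothesis on $\mu$. The paper streamlines your deterministic half by invoking the already-packaged inequality~\eqref{eq:eigenvalue_lifting_kappa} from the Wegner proof (with all $\omega_j=0$ and increment $\delta=t^\ast$), which gives $\lambda_1(H_{t^\ast,L})-\lambda_1(H_{0,L})\ge (t^\ast)^\kappa$ directly and spares you the detour through Corollary~\ref{cor:eigenvalue} and the verification that $\kappa>\ao+P\bo$.
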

\begin{remark}[Discussion on initial scale estimate]
Theorem~\ref{thm:initial} may serve as an initial scale estimate for a proof of localization via multi-scale analysis. More precisely, by using the Combes-Thomas estimate, an initial scale estimate in some neighbourhood of $a:= \inf \sigma (H_0)$ follows. Note that the exponents $3/2$ and $d/2$ in Theorem~\ref{thm:initial} can be modified to some extent by adapting the proof and the assumption on the measure $\mu$. Localization in a neighbourhood of $a$ follows via multi-scale analysis, e.g., \`a la \cite{Stollmann-01}. The question whether $\sigma (H_\omega) \cap I_a \not = \emptyset$ for almost all $\omega \in \Omega$ has to be settled. This is, however, satisfied for all examples mentioned above.
In the special case of the standard random breather model one can get rid of the assumption on $\mu$ by proving and using the Lifshitz tail behaviour of the integrated density of states, cf. \cite{Veselic-07a} for the one-dimensional case, and the forthcoming  \cite{SchumacherV} for the multidimensional one.
\end{remark}
\subsection{Application to control theory}
We consider the controlled heat equation
\begin{equation} \label{eq:parabolic}
\begin{cases}
\partial_t u - \Delta u + Vu = f\chi_{\omega}, & u \in L^2([0,T] \times \Omega),  \\
u = 0, & \text{on}\ (0,T) \times \partial \Omega ,  \\
u(0,\cdot) =u_0, & u_0\in L^2(\Omega),
 \end{cases}
 \end{equation}
where $\omega$ is an open subset of the connected $\Omega \subset \RR^d$, $T > 0$ and $V\in L^{\infty}(\Omega)$.
In \eqref{eq:parabolic} $u$ is the state and $f$ is the control function which acts on the system through the control set $\omega$.
\begin{definition}
For initial data $u_0\in L^2(\Omega)$ and time $T > 0$, the set of reachable states $R(T,u_0)$ is
\[
R(T,u_0) = \left\{ u(T,\cdot)\colon \text{there exists}\ f\in L^2([0,T]\times \omega ) \ \text{such that} \ u \ \text{is solution of \eqref{eq:parabolic}} \right\}.
\]
The system \eqref{eq:parabolic} is called null controllable at time $T$ if $0\in R(T;u_0)$ for all $u_0\in L^2(\Omega)$.
The controllability cost $\mathcal{C}(T,u_0)$ at time $T$ for the initial state $u_0$ is
\[
\mathcal{C}(T,u_0) = \inf \left\{ \lVert f \rVert_{L^2([0,T]\times \omega )}\colon
u \
\text{is solution of \eqref{eq:parabolic} and }
u(T,\cdot)=0  \right\}.
\]
\end{definition}
Since the system is linear, null controllability implies that the range of the semigroup generated by the heat equation is reachable too.
It is well known that null controllability holds for any time $T>0$, connected $\Omega$ and any nonempty and open set $\omega \subset \Omega$ on which the control acts, see \cite{FursikovI-96}.

It is also known, see for instance \cite[Theorem 11.2.1]{WeissT-09}, that null controllability of the system \eqref{eq:parabolic} at time $T$ is equivalent to final state observability on the set $\omega$ at time $T$ of the following system:
\begin{equation} \label{eq:control_problem}
\begin{cases}
\partial_t u - \Delta u + Vu = 0, & u \in L^2([0,T] \times \Omega), \\
u = 0, & \text{on} \ (0,T) \times \partial \Omega , \\
u(0,\cdot) = u_0, & u_0\in L^2(\Omega).
\end{cases}
\end{equation}
\begin{definition}
The system \eqref{eq:control_problem} is called final state observable on the set $\omega$ at time $T$ if there exists $\kappa_T = \kappa_T(\Omega, \omega, V)$ such that for every initial state $u_0 \in L^2 (\Omega)$ the solution $u \in L^2([0,T] \times \Omega)$ of \eqref{eq:control_problem} satisfies
\begin{equation}
\label{eq:obscost}
 \lVert u(T,\cdot) \rVert_{\Lambda_L}^2
 \leq \kappa_T  \lVert u \rVert_{L^2([0,T]\times \omega )}^2.
 \end{equation}
\end{definition}
Moreover, the controllability cost $\mathcal{C}(T,u_0)$ of \eqref{eq:parabolic} coincides with the infimum over all observability costs $\sqrt{\kappa_T}$ in \eqref{eq:obscost} times $\lVert u_0 \rVert_{\Lambda_L}$ (see, for example, the proof of \cite[Theorem 11.2.1]{WeissT-09}.
\par
The problem of obtaining explicit bounds on $\mathcal{C}(T,u_0)$ received much consideration in the literature (see, for example, \cite{Guichal-85,FernandezZ-00,Phung-04,TenenbaumT-07,Miller-06,Miller-04,Miller-10,ErvedozaZ-11,Lissy-12}), especially the case of small time, i.e.\ when $T$ goes to zero. 
The dependencies of the controllability cost on $T$ and $\lVert V \rVert_\infty$ are today well understood, see, for example \cite{Zuazua-07}. 
However, the dependence on the geometry of the control set is less clear: in the known estimates the geometry enters only in terms of the distance to the boundary or in terms of the geometrical optics condition.
To find an optimal control set is a very difficult problem, see for instance the recent articles \cite{PrivatTZ-15a, PrivatTZ-15b}.
\par
We are interested in the situation $\Omega = \Lambda_L \subset \RR^d$ and $\omega = W_\delta(L)$ for a $(G,\delta)$-equidistributed sequence with $L \in G\NN$, $G > 0$ and $\delta < G/2$.
In this specific setting we will give an estimate on the controllability cost.
The novelty of our result is that the observability cost is independent
of the scale $L$ and the specific choice of the $(G, \delta)$-equidistributed sequence.
Moreover, the dependencies on $\lVert V \rVert_\infty$ and on the size of the control set via $\delta$ are known explicitly.
As far as we are aware, this is the first time that such a scale-free estimate is obtained.
\par
 By the equivalence between null-controllability and final state observability, it is sufficient to construct an estimate of the form  \eqref{eq:obscost}.
In order to find such an estimate, we will combine Corollary \ref{cor:result1} with results from \cite{Miller-10} to obtain the following theorem.
\begin{theorem}
\label{thm:contcost}
For every $G > 0$, $\delta \in (0, G/2)$ and $K_V \geq 0$ there is $T' = T'(G, \delta, K_V) > 0$ such that
for all $T \in (0,T']$, all $(G,\delta)$-equidistributed sequences, all measurable and bounded $V: \RR^d \to \RR^d$ with $\lVert V \rVert_\infty \leq K_V$ and all $L \in G \NN$, the system
\begin{equation*} \label{eq:control_problem_concrete}
\begin{cases}
\partial_t u - \Delta_L u + V_L u = 0, & u \in L^2([0,T] \times \Lambda_L), \\
u = 0, & \text{on} \ (0,T) \times \partial \Lambda_L , \\
u(0,\cdot) = u_0, & u_0\in L^2(\Lambda_L).
\end{cases}
\end{equation*}
is final state observable on the set $W_\delta(L)$ with cost $\kappa_T$ satisfying
\[
\kappa_T \leq 4 a_0 b_0 \euler^{2 c_{\ast} / T} ,
\]
where
$  a_0 = (\delta / G)^{- N ( 1 + G^{4/3} \lVert V \rVert_\infty^{2/3})}$,
 $b_0 = \euler^{2 \lVert V \rVert_\infty}$,  
 $c_{\ast} \leq \ln (G/\delta)^2  \left( NG + 4 / \ln 2 \right)^2$ {and}
 $N = N (d)$ is the constant from Theorem \ref{thm:result1}.
\end{theorem}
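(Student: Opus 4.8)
The plan is to deduce the final state observability estimate \eqref{eq:obscost} from a \emph{spectral inequality} for $H_{1,L}=-\Delta_L+V_L$ via the Lebeau--Robbiano telescoping, in the quantitative ``direct'' form of \cite{Miller-10}. The proof thus has two parts: (i) recast Corollary~\ref{cor:result1} as a spectral inequality in exactly the shape demanded by \cite{Miller-10}, with the constants tracked explicitly and --- this is the crucial point --- \emph{uniformly in $L$}; (ii) insert it into Miller's abstract observability theorem and bookkeep the emerging constants into $a_0$, $b_0$ and $c_\ast$.

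For part (i) I would first pass to a non-negative generator. Since $-\Delta_L\ge 0$ under Dirichlet boundary conditions, the operator $A:=H_{1,L}+\lVert V\rVert_\infty$ is non-negative self-adjoint on $L^2(\Lambda_L)$, generates the analytic semigroup $\euler^{-tA}=\euler^{-t\lVert V\rVert_\infty}\euler^{-tH_{1,L}}$, and for every $\lambda\ge 0$
\[
 \mathrm{Ran}\bigl(\chi_{[0,\lambda]}(A)\bigr)=\mathrm{Ran}\bigl(\chi_{(-\infty,\lambda-\lVert V\rVert_\infty]}(H_{1,L})\bigr)\subset\mathrm{Ran}\bigl(\chi_{(-\infty,\lambda]}(H_{1,L})\bigr).
\]
Applying Corollary~\ref{cor:result1} with $t=1$, the given $(G,\delta)$-equidistributed sequence and $b=\max\{0,\lambda-\lVert V\rVert_\infty\}\le\lambda$, and using monotonicity of the exponent in $b$, yields for every $\phi\in\mathrm{Ran}(\chi_{[0,\lambda]}(A))$
\[
 \lVert\phi\rVert_{L^2(\Lambda_L)}^2\le\Bigl(\tfrac{\delta}{G}\Bigr)^{-N(1+G^{4/3}\lVert V\rVert_\infty^{2/3}+G\sqrt{\lambda})}\lVert\phi\rVert_{L^2(W_\delta(L))}^2=a_0\,\euler^{NG\ln(G/\delta)\sqrt{\lambda}}\,\lVert\phi\rVert_{L^2(W_\delta(L))}^2,
\]
with $a_0$ exactly as in the statement and, crucially, \emph{no dependence on $L$} or on the particular equidistributed sequence. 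With $C:=\chi_{W_\delta(L)}$, a bounded and hence admissible observation operator, this is precisely the spectral inequality ``$\lVert\Pi_\lambda x\rVert\le D_0\,\euler^{D_1\sqrt{\lambda}}\lVert C\Pi_\lambda x\rVert$'' of \cite{Miller-10}, with $\Pi_\lambda:=\chi_{[0,\lambda]}(A)$, $D_0=\sqrt{a_0}$ and $D_1=\tfrac12 NG\ln(G/\delta)$.

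For part (ii), since $A\ge 0$ the dissipation bound $\lVert\euler^{-tA}(1-\Pi_\lambda)\rVert\le\euler^{-t\lambda}$ holds automatically, so the hypotheses of the ``direct Lebeau--Robbiano'' scheme of \cite{Miller-10} are met; it delivers, for every $T>0$, final state observability of $\euler^{-tA}$ on $C$ at time $T$ with a cost of the shape $\kappa_T^A\le c_1\,a_0\,\euler^{c_2/T}$, where $c_1$ and $c_2$ are governed by $D_0$, $D_1$ and the free geometric ratios in the telescoping. Making the dyadic choice --- frequency cut-offs growing by a factor $2$, time steps forming a summable geometric sequence --- the coefficient $2D_1=NG\ln(G/\delta)$ gets inflated by the dyadic overhead $\tfrac{4}{\ln 2}\ln(G/\delta)=4\log_2(G/\delta)$ to $(NG+4/\ln 2)\ln(G/\delta)$, whose square is $c_\ast$, while $c_1$ reduces to an absolute factor $\le 4$; thus $\kappa_T^A\le 4a_0\,\euler^{2c_\ast/T}$. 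Finally I would undo the shift: with $v(t):=\euler^{-t\lVert V\rVert_\infty}u(t)=\euler^{-tA}u_0$ one has $u(t)=\euler^{t\lVert V\rVert_\infty}v(t)$, hence $\lVert u(T,\cdot)\rVert_{\Lambda_L}^2=\euler^{2T\lVert V\rVert_\infty}\lVert v(T)\rVert^2$ while $\lVert v(t)\rVert_{L^2(W_\delta(L))}^2\le\lVert u(t)\rVert_{L^2(W_\delta(L))}^2$, so
\[
 \lVert u(T,\cdot)\rVert_{\Lambda_L}^2\le\euler^{2T\lVert V\rVert_\infty}\,\kappa_T^A\int_0^T\lVert v(t)\rVert_{L^2(W_\delta(L))}^2\,\drm t\le\euler^{2T\lVert V\rVert_\infty}\,\kappa_T^A\,\lVert u\rVert_{L^2([0,T]\times W_\delta(L))}^2 .
\]
Choosing the threshold $T'=T'(G,\delta,K_V)\le 1$ (so that $\euler^{2T\lVert V\rVert_\infty}\le\euler^{2\lVert V\rVert_\infty}=b_0$ for $T\le T'$, and small enough for the constants of \cite{Miller-10} to take the clean form above) yields $\kappa_T\le 4a_0 b_0\,\euler^{2c_\ast/T}$ for all $T\in(0,T']$, which is the claim.

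I expect the only genuinely delicate step to be the constant bookkeeping in the telescoping: pinning down the prefactor $4$ and the exact quadratic form $\ln(G/\delta)^2(NG+4/\ln 2)^2$ of $c_\ast$ requires committing to the specific geometric choices of \cite{Miller-10} and optimizing over them. Everything else --- the spectral shift, admissibility of $\chi_{W_\delta(L)}$, the reduction of null-controllability to final state observability via \cite[Theorem~11.2.1]{WeissT-09}, and above all the independence of the constants from $L$ and from the choice of equidistributed sequence, which is handed to us directly by the scale-free Corollary~\ref{cor:result1} --- is routine.
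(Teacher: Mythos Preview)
Your approach is essentially the paper's: both feed the spectral inequality from Corollary~\ref{cor:result1} into Miller's abstract observability theorem \cite[Theorem~2.2 and Corollary~1(i)]{Miller-10} and read off the constants. The only cosmetic differences are that the paper applies Miller directly to $A=\Delta_L-V_L$ and obtains $b_0=\euler^{2\lVert V\rVert_\infty}$ from Miller's reference-observability condition~(7) via the bound $\euler^{2T\lVert V\rVert_\infty}/T\le b_0\,\euler^{2/T}$ for $T\le 1$, whereas you shift to the non-negative $H_{1,L}+\lVert V\rVert_\infty$ and recover $b_0$ by undoing the shift at the end; and the paper extracts $c_\ast$ from Miller's explicit formula $c_\ast=4(\sqrt{a+2}-\sqrt a)^{-4}$ with $a=(N/2)G\ln(G/\delta)$ rather than by telescoping bookkeeping, which makes the final bound on $c_\ast$ a two-line estimate rather than the step you rightly flag as delicate.
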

\begin{remark}
The same result holds also in the case of the controlled heat equation with periodic or Neumann boundary conditions with obvious modifications. 	
\end{remark}
\begin{remark}
Null controllability of the heat equation implies a stronger type of controllability, so-called approximate controllability. Following \cite{FernandezZ-00}, one can find an estimate for the cost of approximate controllability from the proof of Theorem \ref{thm:contcost}. We will not pursue it in this paper.
\end{remark}
%
%
\section{Proof of scale-free unique continuation principle}\label{s:proof-sfuc}
\subsection{Carleman inequalities}
We denote by $\RR^{d+1}_+ := \{ x \in \RR^{d+1} \colon x_{d+1} \geq 0 \}$ the $d+1$-dimensional half-space and by $B_r^+ := \{x \in \RR_+^{d+1} \colon \lvert x \rvert < r\}$ the $d+1$-dimensional half-ball. For $x \in \RR^{d+1}$ we denote by $x'$ the projection on the first $d$ coordinates, i.e.\ for $x = (x_1 , \ldots , x_{d+1}) \in \RR^{d+1}$ we use the notation $x' = (x_1 , \ldots , x_d) \in \RR^d$. By $\lvert x \rvert$ and $\lvert x' \rvert$ we denote the Euclidean norms and by $\Delta$ the Laplacian on $\RR^{d+1}$.
For functions $f \in C^\infty (\RR^{d+1}_+)$ we use the notation $f_0 = f\rvert_{x_{d+1} = 0}$.
\par
In the appendix of \cite{LebeauR-95} Lebeau and Robbiano state a Carleman estimate for complex-valued functions with support in $B_r^+$ by using a real-valued weight function $\psi \in C^\infty (\RR^{d+1})$ satisfying the two conditions
\begin{equation} \label{eq:7}
 \forall x \in B_r^+ \colon \quad (\partial_{d+1} \psi ) (x) \not = 0,
\end{equation}
and for all $\xi \in \RR^{d+1}$ and $x \in B_r^+$ there holds
\begin{equation} \label{eq:8}
\left.
\begin{array}{l}
 2 \langle \xi , \nabla \psi \rangle = 0 \\[1ex]
 \lvert \xi \rvert^2 = \lvert \nabla \psi \rvert^2
\end{array}
\right\}
\quad \Rightarrow \quad
\sum_{j,k=1}^{d+1} (\partial_{jk} \psi) \bigl(\xi_j \xi_k + (\partial_j \psi) (\partial_k \psi) \bigr) > 0 .
\end{equation}
As proposed in \cite{JerisonL-99} we choose $r < 2 - \sqrt{2}$ and the special weight function $\psi : \RR^{d+1} \to \RR$,
\begin{equation} \label{eq:weight}
\psi (x) = -x_{d+1} + \frac{x_{d+1}^2}{2} - \frac{\lvert x' \rvert^2}{4} .
\end{equation}
Note that $\psi (x) \leq 0$ for all $x \in B_2^+$.
This function $\psi$ indeed satisfies the assumptions \eqref{eq:7} and \eqref{eq:8}. Condition \eqref{eq:7} is trivial for $r < 1$. In order to show the implication \eqref{eq:8} we show
\begin{equation} \label{eq:8b}
 \lvert \xi \rvert^2 = \lvert \nabla \psi \rvert^2  \ \Rightarrow \
\sum_{j,k=1}^{d+1} \partial_{jk} \psi (\xi_j \xi_k + \partial_j \psi \partial_k \psi) > 0 .
\end{equation}
We use the hypothesis of \eqref{eq:8b} and calculate
\begin{align*}
\sum_{j,k=1}^{d+1} \partial_{jk} \psi (\xi_j \xi_k + \partial_j \psi \partial_k \psi)
&= -\frac{1}{2} \sum_{i=1}^d \xi_i^2 + \xi_{d+1}^2 - \frac{1}{8} \lvert x' \rvert^2 + (x_{d+1} - 1)^2  \\
&= \frac{3}{2} \xi_{d+1}^2 - \frac{1}{4} \lvert x' \rvert^2 + \frac{1}{2} (x_{d+1}-1)^2 .
\end{align*}
Since $\lvert x' \rvert^2 \leq r^2$ and $(x_{d+1}-1)^2 \geq (1 - r)^2$, assumption \eqref{eq:8b} is satisfied if $r < 2 - \sqrt{2}$. Now let
\begin{multline*}
 C_{\mathrm{c} , 0}^\infty (B_r^+)
 =
 \left\{ g : \RR^{d+1}_+ \to \CC \colon g \equiv 0 \ \text{on} \ \{x_{d+1} = 0\}, \right. \\
 \left. \exists \psi \in C^\infty (\RR^{d+1}) \ \text{with}\ \supp \psi \subset \{x \in \RR^{d+1} \colon \lvert x \rvert < r\} \ \text{and} \ g \equiv \psi \ \text{on} \ \RR_+^{d+1} \right\} .
\end{multline*}
Hence, as a corollary of Proposition~1 in the appendix of \cite{LebeauR-95} we have
\begin{proposition} \label{prop:carleman}
Let $\psi \in C^\infty (\RR^{d+1} ; \RR)$ be as in Eq.~\eqref{eq:weight} and $\rho \in (0,2-\sqrt{2})$. Then there are constants $\beta_0, C_1 \geq 1$ such that for all $\beta \geq \beta_0$, and all
$g \in C_{\mathrm c , 0}^\infty (B_\rho^+)$ we have
\begin{equation*}
 \int_{\RR^{d+1}} \euler^{2\beta \psi} \left( \beta  \lvert \nabla g \rvert^2 + \beta^3 \lvert g \rvert^2 \right) \leq
 C_1 \left(
 \int_{\RR^{d+1}} \euler^{2 \beta \psi} \lvert \Delta g \rvert^2  +  \beta \int_{\RR^d} \euler^{2 \beta \psi_0} \lvert (\partial_{d+1} g)_0 \rvert^2
 \right).
\end{equation*}
\end{proposition}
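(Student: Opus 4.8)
\emph{Proof plan.} The proposition is obtained by specializing the boundary Carleman estimate from the appendix of \cite{LebeauR-95} to the concrete weight $\psi$ of \eqref{eq:weight} on the half-ball $B_\rho^+$, so the plan has three steps: verify the hypotheses \eqref{eq:7}--\eqref{eq:8}, invoke that estimate, and simplify the boundary term using that elements of $C^\infty_{\mathrm{c},0}(B_\rho^+)$ vanish on $\{x_{d+1}=0\}$. For the hypotheses: $\psi$ is smooth and real-valued; since $\rho < 2-\sqrt 2 < 1$ we have $\partial_{d+1}\psi(x) = x_{d+1}-1 \in [-1,\rho-1]$, which gives \eqref{eq:7} and, in particular, a nonvanishing conormal derivative at the flat boundary with the sign that makes the Dirichlet boundary term enter on the right-hand side; and \eqref{eq:8} follows from the sufficient condition \eqref{eq:8b} via the identity computed above, for $x\in B_\rho^+$ and $\lvert\xi\rvert^2=\lvert\nabla\psi(x)\rvert^2$,
\[
\sum_{j,k=1}^{d+1}\partial_{jk}\psi\,\bigl(\xi_j\xi_k+\partial_j\psi\,\partial_k\psi\bigr)=\tfrac32\xi_{d+1}^2-\tfrac14\lvert x'\rvert^2+\tfrac12(x_{d+1}-1)^2\geq\tfrac12(1-\rho)^2-\tfrac14\rho^2>0,
\]
the strict positivity being equivalent to $\rho<2-\sqrt 2$.

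Since the Laplacian has real constant coefficients, the estimate extends to complex-valued $g$ by splitting into real and imaginary parts. Thus the cited proposition provides $\beta_0,C\geq 1$ such that for all $\beta\geq\beta_0$ and all $g$ extending smoothly to $\RR^{d+1}$ with $\supp g\subset B_\rho^+$,
\[
\int_{\RR^{d+1}}\euler^{2\beta\psi}\bigl(\beta\lvert\nabla g\rvert^2+\beta^3\lvert g\rvert^2\bigr)\leq C\Bigl(\int_{\RR^{d+1}}\euler^{2\beta\psi}\lvert\Delta g\rvert^2+\beta\int_{\RR^d}\euler^{2\beta\psi_0}\bigl(\lvert(\nabla'g)_0\rvert^2+\lvert(\partial_{d+1}g)_0\rvert^2+\beta^2\lvert g_0\rvert^2\bigr)\Bigr),
\]
where $\nabla'$ is the tangential gradient and any bounded geometric prefactors appearing in the boundary terms (bounded by $\lvert\partial_{d+1}\psi\rvert\leq 1$ on $B_\rho^+$) have been absorbed into $C$. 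If now $g\in C^\infty_{\mathrm{c},0}(B_\rho^+)$, then $g_0\equiv 0$ on $\RR^d$, hence also $(\nabla'g)_0\equiv 0$, so the first and third boundary terms drop and only $\beta\int_{\RR^d}\euler^{2\beta\psi_0}\lvert(\partial_{d+1}g)_0\rvert^2$ remains. Setting $C_1:=\max\{C,1\}$ (and enlarging $\beta_0$ if necessary so that $\beta_0\geq 1$) yields exactly the claimed inequality.

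The only genuinely delicate point, beyond bookkeeping of constants and matching the exact form of the boundary terms in the reference, is that the sub-ellipticity inequality \eqref{eq:8b} must hold \emph{uniformly on the closed half-ball} $\overline{B_\rho^+}$: Hörmander-type boundary Carleman estimates require the pseudoconvexity condition on a full neighbourhood of $\supp g$, so a merely pointwise strict inequality on the open ball would not suffice. The strict choice $\rho<2-\sqrt 2$ is precisely what supplies the uniform lower bound $\tfrac12(1-\rho)^2-\tfrac14\rho^2>0$ exhibited above, which is why $\rho$ is not allowed to equal $2-\sqrt 2$.
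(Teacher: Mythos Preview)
Your proposal is correct and follows essentially the same route as the paper: the paper verifies conditions \eqref{eq:7} and \eqref{eq:8} for the explicit weight \eqref{eq:weight} (via the sufficient condition \eqref{eq:8b} and the same computation you reproduce) and then simply states the proposition ``as a corollary of Proposition~1 in the appendix of \cite{LebeauR-95}''. Your write-up is in fact more explicit than the paper's, spelling out how the tangential and zero-order boundary terms drop because $g_0\equiv 0$, but the underlying argument is identical.
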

We will also need the following Carleman estimate.
\begin{proposition} \label{prop:carleman2}
Let $\rho > 0$. Then there are constants $\alpha_0,C_2 \geq 1$ depending only on the dimension and a function $w = \:\RR^d \to \RR$ satisfying
\[
 \forall x \in B(\rho) \colon \frac{\lvert x \rvert}{\rho \mathrm{e}} \leq w(x) \leq  \frac{\lvert x \rvert}{\rho} ,
\]
such that for all $\alpha \geq \alpha_0$, and all $u \in W^{2,2} (\RR^d)$ with support in $B(\rho) \setminus \{0\}$ we have
\[
 \int_{\RR^d} \left( \alpha {\rho^2} w^{1-2\alpha} \lvert \nabla u \rvert^2 + \alpha^3 w^{-1-2\alpha} \lvert u \rvert^2  \right) \drm x \leq C_2 \rho^4 \int_{\RR^d} w^{2-2\alpha} \left\lvert \Delta u \right\rvert^2 \drm x .
\]
\end{proposition}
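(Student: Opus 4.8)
\emph{Proof plan.} This is a classical singular-weight Carleman estimate, and I would establish it by Hörmander's commutator method after two reductions. First I would remove the parameter $\rho$ by scaling: if $u$ is supported in $B(\rho)\setminus\{0\}$, then $\tilde u(y):=u(\rho y)$ is supported in $B(1)\setminus\{0\}$ and $(\Delta\tilde u)(y)=\rho^2(\Delta u)(\rho y)$, so substituting $x=\rho y$ in every integral turns the asserted inequality for radius $\rho$, with weight $w_\rho(x):=w(x/\rho)$, into the same inequality for radius $1$, with weight $w$ --- the prefactors $\rho^2,\rho^4$ being exactly what the change of variables produces, and the bound $|x|/(\rho\mathrm{e})\le w_\rho(x)\le|x|/\rho$ on $B(\rho)$ becoming $|y|/\mathrm{e}\le w(y)\le|y|$ on $B(1)$. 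So it is enough to treat $\rho=1$ with $\alpha_0,C_2$ depending only on $d$. Second I would pass to logarithmic polar coordinates $s=-\log|x|\in(0,\infty)$, $\theta=x/|x|\in S^{d-1}$, under which $|x|^2\Delta=\partial_s^2-(d-2)\partial_s+\Delta_{S^{d-1}}$ ($\Delta_{S^{d-1}}$ the Laplace--Beltrami operator) and $\drm x=|x|^{d}\,\drm s\,\drm\theta$; the substitution $v:=|x|^{(d-2)/2}u$ removes the first-order term and yields $|x|^{(d+2)/2}\Delta u=\mathcal{L}v$ with $\mathcal{L}:=\partial_s^2+\Delta_{S^{d-1}}-\tfrac{(d-2)^2}{4}$ formally self-adjoint on $L^2((0,\infty)\times S^{d-1})$.

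For the weight I would take $w(x):=|x|\,\mathrm{e}^{-|x|/2}$, so that $|x|/\mathrm{e}\le w(x)\le|x|$ on $B(1)$ and $\phi(s):=-\log w=s+\tfrac12\mathrm{e}^{-s}$ is smooth and strictly convex on $(0,\infty)$ with $\phi'\in(\tfrac12,1)$, $\phi''=\tfrac12\mathrm{e}^{-s}>0$, $\phi''''=\tfrac12\mathrm{e}^{-s}$; the decisive feature is that $\phi''$, $\phi''''$ and $(\phi')^2\phi''$ all behave like $\mathrm{e}^{-s}=|x|$ on the whole half-line. The core is then the commutator estimate. For $g:=\mathrm{e}^{\alpha\phi}v$ one writes $\mathrm{e}^{\alpha\phi}\mathcal{L}\,\mathrm{e}^{-\alpha\phi}=S_\alpha+A_\alpha$, with $S_\alpha=\partial_s^2+\Delta_{S^{d-1}}+\alpha^2(\phi')^2-\tfrac{(d-2)^2}{4}$ self-adjoint and $A_\alpha=-2\alpha\phi'\partial_s-\alpha\phi''$ skew-adjoint, so $\lVert\mathrm{e}^{\alpha\phi}\mathcal{L}\,\mathrm{e}^{-\alpha\phi}g\rVert^2=\lVert S_\alpha g\rVert^2+\lVert A_\alpha g\rVert^2+\langle[S_\alpha,A_\alpha]g,g\rangle$. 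Integrating by parts (no boundary terms, since $\supp g$ is compact in $(0,\infty)\times S^{d-1}$; the angular operator commutes with $A_\alpha$) gives $\langle[S_\alpha,A_\alpha]g,g\rangle=4\alpha^3\int(\phi')^2\phi''|g|^2+4\alpha\int\phi''|\partial_s g|^2-\alpha\int\phi''''|g|^2$, and since $\phi''\ge0$ and $\phi''''\sim(\phi')^2\phi''$, this is $\gtrsim\alpha^3\int|x|\,|g|^2$ once $\alpha\ge\alpha_0(d)$; dropping the nonnegative terms $\lVert S_\alpha g\rVert^2,\lVert A_\alpha g\rVert^2$ we obtain $\alpha^3\int|x|\,|g|^2\lesssim\lVert\mathrm{e}^{\alpha\phi}\mathcal{L}\,\mathrm{e}^{-\alpha\phi}g\rVert^2$.

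To finish I would undo $g=\mathrm{e}^{\alpha\phi}v$, $v=|x|^{(d-2)/2}u$ and return to the variable $x$ using $\mathrm{e}^{2\alpha\phi}=w^{-2\alpha}$, $\drm s\,\drm\theta=|x|^{-d}\drm x$, $\mathcal{L}v=|x|^{(d+2)/2}\Delta u$, and $|x|\sim w$ (more precisely $|x|^{-1}=w^{-1}\mathrm{e}^{-|x|/2}$, $|x|^2=w^2\mathrm{e}^{|x|}$, with the exponential factors in $[\mathrm{e}^{-1/2},1]$ resp.\ $[1,\mathrm{e}]$ on $B(1)$): the powers of $|x|$ coming from the Jacobian, the substitution, and the factor $|x|\sim\phi''$ then combine to give exactly $\alpha^3\int w^{-1-2\alpha}|u|^2\le C\int w^{2-2\alpha}|\Delta u|^2$ with $C=C(d)$. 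A Caccioppoli-type argument upgrades this to the gradient term: writing $\int w^{1-2\alpha}|\nabla u|^2=-\int u\,\nabla\cdot(w^{1-2\alpha}\nabla u)$, estimating the two resulting terms by Cauchy--Schwarz using $|\nabla w|\le1$ and $w\le1$, and inserting the $|u|^2$-estimate just obtained, one gets $\alpha\int w^{1-2\alpha}|\nabla u|^2\le C'(d)\int w^{2-2\alpha}|\Delta u|^2$. This proves the case $\rho=1$ for $u\in C^\infty_{\mathrm{c}}(B(1)\setminus\{0\})$; mollification extends it to $W^{2,2}$, and general $\rho$ follows by the scaling above.

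The hard part is really the choice of weight: one needs $\phi=-\log w$ \emph{strictly} convex on all of $(0,\infty)$, so that no constant depends on $\supp u$, while keeping $\log w-\log|x|$ confined to $[-1,0]$ (the two-sided bound on $w$) and, simultaneously, making $\phi''$, $\phi''''$ and $(\phi')^2\phi''$ decay at the \emph{same} rate $\sim|x|$ as $|x|\to0$ --- this last matching being precisely what lets the $\alpha^3$-term beat the error term in the commutator and, after the change of variables, reproduces the exponents $1-2\alpha$, $-1-2\alpha$, $2-2\alpha$ rather than any others. Checking these properties for an explicit $w$ and carrying out the two-variable bookkeeping are the only genuine computations; they are performed, following \cite{EscauriazaV-03,BourgainK-05,NakicRT-15}, in \S\ref{app:carleman}.
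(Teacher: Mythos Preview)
Your argument is correct and complete, but it follows a different path from the paper's Appendix~\ref{app:carleman}. Both proofs reduce to $\rho=1$ by scaling and both ultimately compute the same conjugation $w^{-\alpha}\Delta w^{\alpha}$, but the organisation differs in two places.

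First, the weight and coordinates. You pass to logarithmic polar coordinates $s=-\log|x|$, conjugate by $|x|^{(d-2)/2}$ to make the radial operator formally self-adjoint, and take the explicit weight $w(x)=|x|\,\mathrm{e}^{-|x|/2}$, so that $\phi=-\log w$ has $\phi''=\phi''''=\tfrac12\mathrm{e}^{-s}$ and the commutator $\langle[S_\alpha,A_\alpha]g,g\rangle$ is transparently positive. The paper instead works directly in $\RR^d$ in the style of \cite{BourgainK-05}, with the weight $w(x)=\psi(|x|)$, $\psi(s)=s\exp\bigl[-\int_0^s\frac{1-\mathrm{e}^{-t}}{t}\,\drm t\bigr]$, and carries along the auxiliary quantities $A(g)$, $\tilde\nabla g$, $F_w$ up to the intermediate inequality~\eqref{eq:intermediate1}. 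Your choice of weight is simpler; the paper's is the one already used in \cite{BourgainK-05,NakicRT-15}, so that the $u^2$-estimate~\eqref{eq:carleman1} can be quoted rather than rederived.

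Second, and more substantively, the gradient term. The paper obtains it by \emph{not} discarding the tangential-gradient term $\int\sigma\phi'(\sigma)|\tilde\nabla g|^2$ in~\eqref{eq:intermediate1}, then expanding $|\tilde\nabla g|^2$ via the identity~\eqref{eq:gradient} to produce $\int w^{1-2\alpha}|\nabla u|^2$ at the cost of a negative $\alpha^2$-term, yielding~\eqref{eq:carleman2}; adding~\eqref{eq:carleman1} and~\eqref{eq:carleman2} then gives the proposition. You instead drop all gradient information from the commutator, establish the pure $u^2$-bound first, and recover $\alpha\int w^{1-2\alpha}|\nabla u|^2$ a posteriori by the Caccioppoli step $\int w^{1-2\alpha}|\nabla u|^2=-\int u\,\nabla\!\cdot(w^{1-2\alpha}\nabla u)$, Cauchy--Schwarz, and absorption. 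This is shorter and avoids the bookkeeping of $\tilde\nabla g$ and $A(g)$; the paper's route, on the other hand, extracts the gradient bound from within the Carleman machinery itself, which is the structure needed in \cite{NakicRT-15} for variable-coefficient operators where a naive Caccioppoli argument would not interact cleanly with the weight.
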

Proposition~\ref{prop:carleman2} is a special case of the result obtained in \cite{NakicRT-15} where general second order elliptic partial differential operators with Lipschitz continuous coefficients are considered.
The estimate has been previously obtained; (1) in \cite{BourgainK-05}, but there without the gradient term on the left hand side; (2) in \cite{EscauriazaV-03}, but there without a quantitative statement of the admissible functions $u$.
These weaker versions are not sufficient for our purposes.
In Appendix~\ref{app:carleman} we sketch for reader acquainted with the proof of \cite{BourgainK-05} the difference between the two results.
%
%
\subsection{Extension to larger boxes}\label{sec:extension}
For each measurable and bounded $V:\RR^d \to \RR$ and each $L \in \NN$ we denote the eigenvalues of the corresponding operator $H_L$ by $E_k$, $k \in \NN$, enumerated in increasing order and counting multiplicities, and fix a corresponding sequence $\phi_k$, $k \in \NN$, of normalized eigenfunctions. Note that we suppress the dependence of $E_k$ and $\phi_k$ on $V$ and $L$.
\par
Given $V$ and $L$ we define an extension of the potential $V_L$ and the eigenfunctions $\phi_k$ to the set $\Lambda_{\R L}$ for some $\R \in \NN_{\mathrm{odd}} = \{1,3,5,\ldots\}$ to be chosen later on. The extension will depend on the type of boundary conditions we are considering for the Laplace operator.
\begin{description}[\setleftmargin{0pt}]
\item[Extension for periodic boundary conditions:]
We extend the potential $V_L$ as well as the function $\phi_k$,
defined on the box $\Lambda_L$, periodically to  $\tilde V, \tilde \psi \colon \RR^d\to\RR$ and then restrict them to $\Lambda_{RL}$.
By the very definition of the operator domain of $\Delta_{\Lambda_L}$
with periodic boundary conditions
the extension $\tilde \psi$ is locally in the Sobolev space $W^{2,2}(\RR^d)$.
\item[Extension for Dirichlet and Neumann boundary conditions:]
The potential $V_L$ will be ex\-ten\-ded by symmetric reflections with respect to the hypersurfaces forming the boundaries of $\Lambda_L$. In the first step we extend $V_L : \Lambda_L \to \RR$ to the set
 $H_L = \{x \in \Lambda_{3L} \colon  x_i \in (-L/2 , L/2), \ i \in \{2,\ldots , d\}\}$ by
\[
V_L (x) = \begin{cases}
V_L (x) & \text{if $x \in \Lambda_{L}$} , \\
0 & \text{if $x_1 \in \{-L/2 , L/2\}$}, \\
V_L (L - x_1 , x_2 , \ldots , x_d) & \text{if $x_1 > L/2$} , \\
V_L (-L - x_1 , x_2 , \ldots , x_d) & \text{if $x_1 < - L/2$}.
\end{cases}
\]
Now we iteratively extend $V_L$ in the remaining $d-1$ directions using the same procedure and obtain a function $V_L : \Lambda_{3L} \to \RR$. Iterating this procedure we obtain a function $V_L : \Lambda_{\R L} \to \RR$.
The extensions of the eigenfunctions will depend on the boundary conditions.
In the case of Dirichlet boundary conditions, we extend an eigenfunction similarly to the potential by antisymmetric reflections, while in the case of Neumann boundary conditions, we extend by symmetric reflections.
\end{description}
The extensions of the functions and $V_L$ and $\phi_k$, $k \in \NN$, to the set $\Lambda_{\R L}$ will again be denoted by $V_L$ and $\phi_k$, $k \in \NN$. The reader should be reminded that (the extended) $V_L : \Lambda_{RL} \to \RR$ does in general not coincide with $V_{RL}: \Lambda_{RL} \to \RR$.
Note that for all three boundary conditions, $V_{L}: \Lambda_{RL} \to \RR$ takes values in $[-\lVert V \rVert_\infty, \lVert V \rVert_\infty]$, the extended $\phi_k$ are elements of $W^{2,2} (\Lambda_{\R L})$ with corresponding boundary conditions and they satisfy $\Delta \phi_k = (V_L - E_k) \phi_k$ on $\Lambda_{\R L}$.
Furthermore, the orthogonality relations remain valid.
%
%
\subsection{Ghost dimension}
For a measurable and bounded $V : \RR^d \to \RR$, $L \in \NN$, $b \geq 0$ and $\phi \in \mathrm{Ran} (\chi_{(-\infty,b]}(H_L))$ we have
\[
 \phi = \sum_{\genfrac{}{}{0pt}{2}{k \in \NN}{E_k \leq b}} \alpha_k \phi_k , \quad \text{with} \quad
 \alpha_k = \langle \phi_k , \phi \rangle .
\]
Since $\phi_k$ extend to $\Lambda_{RL}$ as explained in Section~\ref{sec:extension}, the function $\phi$ also extends to $\Lambda_{RL}$.
We set $\omega_k := \sqrt{\lvert E_k \rvert}$ and define the function $F : \Lambda_{\R L} \times \RR \to \CC$ by
\begin{equation*} \label{eq:F}
 F (x , x_{d+1}) = \sum_{\genfrac{}{}{0pt}{2}{k \in \NN}{E_k \leq b}} \alpha_k \phi_k (x) \funs_k( x_{d+1}) ,
\end{equation*}
where $s_k : \RR \to \RR$ is given by
\[
\funs_k(t)=\begin{cases}
	\sinh(\omega_k t)/\omega_k, & E_k>0,\\
	x, & E_k=0,\\
	\sin(\omega_k t)/\omega_k, & E_k<0.
\end{cases}
\]
Note that we suppress the dependence of $\phi$ and $\phi_k$ on $V$, $L$, $b$. Furthermore, the sums are finite since $H_L$ is lower semibounded with purely discrete spectrum. The function $F$ fulfills the handy relations
\begin{equation*} \label{eq:DeltaF}
\Delta F = \sum_{i=1}^{d+1} \partial^2_{i} F  =  V_L F \quad \text{on} \quad  \Lambda_{\R L} \times \RR
\end{equation*}
and
\begin{equation*} \label{eq:F-phi}
\partial_{d+1} F (x,0) = \sum_{\genfrac{}{}{0pt}{2}{k \in \NN}{E_k \leq b}} \alpha_k \phi_k (x) \quad \text{for} \quad x \in \Lambda_{\R L}  .
\end{equation*}
In particular, for all $x \in \Lambda_L$ we have $\partial_{d+1} F (x , 0) = \phi$. This way we recover the original function we are interested in.
\par
Let us also fix the geometry. For $\delta \in (0,1/2)$ we choose
\begin{align*}
\psi_1 &= -\delta^2 / 16, &\psi_2 &= -\delta^2 / 8,  &\psi_3 &= -\delta^2 / 4, \\
\rin_1     &= \frac{1}{2} - \frac{1}{8}\sqrt{16-\delta^2}, & \rin_2 &=1,  &\rin_3 &= 6 \mathrm{e} \sqrt{d}, \\
\rout_1 &=1 - \frac{1}{4}\sqrt{16-\delta^2}, & \rout_2 &=3 \sqrt{d}, & \rout_3 &= 9 \mathrm{e} \sqrt{d},
\end{align*}
and define for $i \in \{1,2,3\}$ the sets
\begin{align*}
S_i &:= \bigl\{x \in \RR^{d+1} \colon \psi (x) > \psi_i , x_{d+1}  \in [0,1] \bigr\} \subset \RR^{d+1}_+ \\
\intertext{and}
V_i &:= B(\rout_i) \setminus \overline{B(\rin_i)} \subset \RR^{d+1} .
\end{align*}
We also fix $\R$ to be the least odd integer larger than $2\rout_3 + 2$.
For $i \in \{1,2,3\}$ and $x\in\RR^d$ we denote by $S_i (x) = S_i + (x,0)$ and $V_i (x) = V_i + (x,0)$ the translates of the sets $S_i\subset \RR^{d+1}$ and $V_i \subset \RR^{d+1}$.
Moreover, for $L \in \NN $ and an $(1,\delta)$-equidistributed sequence $z_j \in \RR^d$, $j \in \ZZ^d$, we define $Q_L = \ZZ^d \cap \Lambda_L$, $U_i (L) = \cup_{j \in Q_L} S_i (\x_j)$, $X_1 = \Lambda_L \times [-1,1]$ and $\tilde X_{\rout_3} = \Lambda_{L + 2\rout_3} \times [-\rout_3 , \rout_3]$. Note that $W_\delta (L)$ is a disjoint union. In the following lemma we collect some consequences of our geometric setting.
We will first restrict our attention to the case $L\in \NN_{\mathrm{odd}}$ , and consider the case of even integers thereafter.
\begin{lemma} \label{lemma:geometric}
\begin{enumerate}[(i)]
\item For all $\delta \in (0,1/2)$ we have $S_1 \subset S_2 \subset S_3 \subset B_\delta^+ \subset \RR^{d+1}_+$.
\item For all $L \in \NN_{\mathrm{odd}}$ with $L \geq 5$, all $\delta \in (0,1/2)$ and all $(1,\delta)$-equidistributed sequences $z_j$ we have
 $\cup_{j \in Q_{L}} V_2 (\x_j) \supset X_1$.
\item There is a constant $K_d$, depending only on $d$, such that for all $L \in \NN_{\mathrm{odd}}$, all $\delta \in (0,1/2)$, all $(1,\delta)$-equidistributed sequences $z_j$, all measurable and bounded $V : \RR^d \to \RR$, all $b \geq 0$ and all $\phi \in \mathrm{Ran} (\chi_{(-\infty,b]}(H_L))$ we have
\[
\sum_{j \in Q_L} \lVert F \rVert_{H^1 (V_3 (\x_j))}^2 \leq K_d \lVert F \rVert^2_{H^1 (\cup_{j \in Q_L} V_3 (\x_j))}.
\]
\item For all $L \in \NN_{\mathrm{odd}}$, $\delta \in (0,1/2)$ and all $(1,\delta)$-equidistributed sequences $z_j$ we have $\cup_{j \in Q_L} \allowbreak V_3 (\x_j) \allowbreak \subset \allowbreak \tilde X_{\rout_3}$.
\end{enumerate}
\end{lemma}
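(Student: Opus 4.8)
The four assertions are all purely geometric and I would treat each separately, in order of increasing bookkeeping. For \emph{(i)} the plan is to check the chain of inclusions by comparing sublevel sets of the weight $\psi$ with the half-ball $B_\delta^+$. Since $\psi_1 > \psi_2 > \psi_3$ (in the sense $-\delta^2/16 > -\delta^2/8 > -\delta^2/4$), the sets $S_i = \{\psi > \psi_i,\ x_{d+1}\in[0,1]\}$ are visibly nested. For $S_3\subset B_\delta^+$ I would use the explicit form $\psi(x) = -x_{d+1} + x_{d+1}^2/2 - |x'|^2/4$: on $x_{d+1}\in[0,1]$ one has $-x_{d+1}+x_{d+1}^2/2 \le 0$ and $\le -x_{d+1}/2$, so $\psi(x) > -\delta^2/4$ forces both $x_{d+1} < \delta^2/2$ (hence $x_{d+1}^2$ small) and $|x'|^2/4 < \delta^2/4 + (\text{small})$, i.e.\ $|x'| < \delta$ after absorbing the $x_{d+1}$ terms; combining, $|x|^2 = |x'|^2 + x_{d+1}^2 < \delta^2$ for $\delta < 1/2$. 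This is a short explicit estimate.

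For \emph{(ii)} I want $\bigcup_{j\in Q_L} V_2(z_j) \supset X_1 = \Lambda_L\times[-1,1]$, where $V_2 = B(3\sqrt d)\setminus\overline{B(1)}$. Given any point $(y,t)\in\Lambda_L\times[-1,1]$, I would locate a lattice point $j\in\ZZ^d\cap\Lambda_L = Q_L$ with $|y - j|$ bounded (roughly $\le \sqrt d/2 \cdot$ something, using $L\ge 5$ odd so the integer lattice points inside $\Lambda_L$ are plentiful and there is enough room near the boundary) but also with $|y-j|$ \emph{not too small}, so that $(y,t)-(z_j,0)$ avoids $\overline{B(1)}$; here the $(1,\delta)$-equidistribution only says $z_j\in B(z_j,\delta)\subset\Lambda_1+j$, so $z_j$ is within $1/2$ of the cell center. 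The inner radius $1$ is what forces the little combinatorial argument: one must pick a lattice point at distance between $1$ and $3\sqrt d - 1$ (after accounting for $|t|\le 1$ and the $\delta$-shift), and such a point exists because $d\ge 1$ makes the annulus wide. For \emph{(iv)}, $\bigcup_{j\in Q_L}V_3(z_j)\subset\tilde X_{\rout_3}$ with $\rout_3 = 9e\sqrt d$: each $V_3(z_j)\subset B(z_j,9e\sqrt d)\times$\,(nothing special), and since $z_j$ lies within $O(1)$ of a point of $\Lambda_L$, the first $d$ coordinates stay in $\Lambda_{L+2\rout_3}$ and the last coordinate in $[-\rout_3,\rout_3]$ — a one-line containment check.

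The substantive part is \emph{(iii)}, the finite-overlap ($H^1$-almost-orthogonality) estimate $\sum_{j\in Q_L}\|F\|_{H^1(V_3(z_j))}^2 \le K_d\|F\|_{H^1(\cup_j V_3(z_j))}^2$; this is the step I expect to be the main obstacle, since it is the one genuinely quantitative claim. The plan is to show that the family $\{V_3(z_j)\}_{j\in Q_L}$ has \emph{bounded overlap}: there is $K_d$ depending only on $d$ such that every point of $\RR^{d+1}$ lies in at most $K_d$ of the sets $V_3(z_j)$. Indeed $V_3(z_j)\subset B((z_j,0), 9e\sqrt d)$ and the centers $(z_j,0)$, $j\in\ZZ^d$, are $\delta$-close to the distinct lattice points $(j,0)$, which are pairwise at distance $\ge 1$; a standard volume-packing argument then bounds the number of such balls through a fixed point by $K_d := (1 + 2\cdot 9e\sqrt d + 1)^{d}$ or similar. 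Given bounded overlap, $\sum_j \mathbf 1_{V_3(z_j)} \le K_d \mathbf 1_{\cup_j V_3(z_j)}$ pointwise, and integrating $|F|^2 + |\nabla F|^2$ against this inequality over $\RR^{d+1}$ yields the claim immediately. I would note that $K_d$ is independent of $L$, $\delta$, $V$, $b$, $\phi$ — which is exactly what the scale-free philosophy of the paper requires, and is the reason the estimate is phrased this way rather than with a trivial constant depending on the number of cubes.

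Finally I would remark that throughout \emph{(ii)}–\emph{(iv)} the restriction $L\in\NN_{\mathrm{odd}}$ is only used to make the counting of lattice points in $\Lambda_L$ clean (so that $0\in Q_L$ and $Q_L$ is centrally symmetric); the even case, as announced in the text, will be reduced to the odd case afterwards and needs no change here.
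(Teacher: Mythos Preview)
Your plan is essentially correct and matches the paper's approach. Parts (i) and (iv) the paper simply declares obvious; your outline for (i) via the explicit $\psi$ is right (and in fact $\psi(x)>-\delta^2/4$ on $x_{d+1}\in[0,1]$ gives $|x'|^2<\delta^2-2x_{d+1}$, whence $|x|^2<\delta^2-x_{d+1}(2-x_{d+1})\le\delta^2$, so no ``absorbing'' is needed). For (iii) your bounded-overlap argument is exactly what the paper does, with the same constant $K_d=(2R_3+2)^d$.

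You have, however, inverted the difficulty of (ii) and (iii). Part (iii) is not the main obstacle; it is the two-line overlap count you describe. The paper spends almost all the effort on (ii), and your sketch there---``pick a lattice point at distance between $1$ and $3\sqrt d-1$; such a point exists because $d\ge 1$ makes the annulus wide''---glosses over a real subtlety: in $d=1$ no \emph{single} choice of $j$ (relative to the cell containing $y$) yields an annulus $V_2(z_j)$ covering the whole cell $[-1/2,1/2]\times[-1,1]$, because $z_j$ floats freely inside its unit cell and the outer radius $R_2=3$ is tight. The paper instead fixes \emph{two} lattice points $j_1=(-1,0,\dots,0)$ and $j_2=(-2,0,\dots,0)$, notes that $x\notin V_2(z_{j_1})\cup V_2(z_{j_2})$ forces $|(z_{j_1},0)-x|<1$ together with $|(z_{j_2},0)-x|>R_2$ (the other two of the four cases being immediate), and from these two inequalities derives the contradiction $9d<(d-1)+8+1$. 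Your per-point strategy is equivalent in the end, but the verification that ``such a $j$ exists'' is precisely this two-annulus case analysis, not a width-of-annulus triviality.
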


We note that part (ii) of Lemma \ref{lemma:geometric} will be applied with $L$ replaced by $5L$.
\begin{proof}
Parts (i) and (iv) are obvious.
\par
To show (ii), we first prove that $[-1/2,1/2]^d\times[-1,1]$ can be covered by the sets $V_2(\x_j)$. Let us take $j_1=(-1,0,\ldots,0),j_2=(-2,0,\ldots,0)$, $j_1,j_2\in Q_L$.
\begin{figure}[b]\centering
 \begin{tikzpicture}[scale=1.0]
  \pgfmathsetseed{{\number\pdfrandomseed}}
  \draw[fill=black!10] (4,0.5) rectangle (5,1.0);
  \draw (5.47,0.75) node {$X_1$};
  \draw[pattern= north east lines,pattern color=black!50] (4,-0.25) rectangle (5,0.25);
  \draw (5.75,0) node {$V_2 (\x_{j_1})$};
  \draw[pattern= north west lines, pattern color=black!50] (4,-1) rectangle (5,-0.5);
  \draw (5.75,-0.75) node {$V_2 (\x_{j_2})$};
  \filldraw[black!10] (-2.5,-1) rectangle (2.5,1);
  \draw[very thick] (-2.5,-1) rectangle (2.5,1);
  \draw[-latex, thick] (-3,0)--(3,0) node[below] {$x_1$};
  \draw[-latex, thick] (0,-1.6)--(0,1.6) node[left] {$x_2$};
  \foreach \x in {-1.5,-0.5,0.5,1.5}{
  \draw[dashed, very thick] (\x , -1)--(\x , 1);
  }
  \foreach \x in {0,1,2}{
    \pgfmathsetmacro{\a}{rand*0.5}
    \filldraw (\x + \a,0) circle (1.5pt);
  }
    \pgfmathsetmacro{\a}{rand*0.5}
    \clip(-5 + \a,-1.3) rectangle (3 ,1.3);
      \draw[pattern=north west lines, pattern color=black!50] (-2 + \a + 1,0) arc (0:360:1) ++(2,0) arc (360:0:3) -- (-2 + \a , 0) -- cycle;
     \filldraw[] (-2 + \a,0) circle (1.5pt);
    \pgfmathsetmacro{\a}{rand*0.5}
    \draw[pattern= north east lines, pattern color=black!50] (-1 + \a + 1,0) arc (0:360:1) ++(2,0) arc (360:0:3) -- (-1 + \a , 0) -- cycle;
     \filldraw[] (-1 + \a,0) circle (1.5pt);
\end{tikzpicture}
\caption{Illustration for (ii) in case $d=1$, $L=5$ and some configuration $\x_j$, $j \in Q_L$. The set $[-1/2 , 1/2] \times [-1,1]$ is covered by $V_2 (\x_{j_1})$ and $V_2 (\x_{j_2})$.}
\label{fig:covering}
\end{figure}
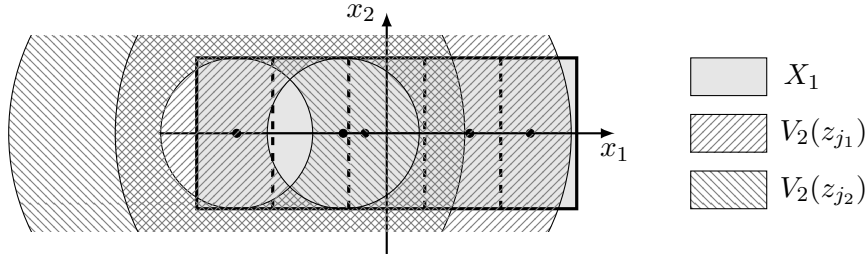
Then
\begin{equation}
\label{eq:cover}
[-1/2,1/2]^d\times[-1,1]\subset V_2(\x_{j_1})\cup V_2(\x_{j_2}) ,
\end{equation}
cf.\ Fig.~\ref{fig:covering}.
Indeed, let $x=(x_1,\ldots,x_{d+1})$ be an arbitrary point from $[-1/2,1/2]^d\times[-1,1]$. Then \eqref{eq:cover} is not satisfied only if
$\lvert (\x_{j_1},0) - x\rvert^2<1$ and $\lvert (\x_{j_2},0) - x\rvert^2>\rout_2^2$.
Since $\x_{j_1}\in ( -3/2+\delta, -1/2 - \delta)\times (-1/2+\delta,1/2 - \delta)^{d-1}$ and $\x_{j_2}\in ( -5/2+\delta, -3/2 - \delta)\times (-1/2+\delta,1/2 - \delta)^{d-1}$, it follows
\begin{gather*}
(-1/2- \delta -x_1)^2+x_{d+1}^2 < 1 \quad\text{and}\quad
(-5/2+ \delta -x_1)^2+(d-1)(1- \delta)^2+x_{d+1}^2 > 9d.
\end{gather*}
Plugging the first relation into the second, we obtain
\[
9d < (d-1)(1 - \delta)^2 + 2 (1- \delta)(3+2x_1) +1 \le (d-1)(1- \delta)^2+8(1- \delta) +1.
\]
But this relation is satisfied only for $d<1$.
Since $L \geq 5$ the same argument applies to cover every elementary cell $([-1/2 , 1/2]+i)\times [-1,1]$, $i \in Q_L$, by two neighboring sets $V_2 (\x_j)$.
\par
Now we turn to the proof of (iii).
Since $\R \geq 2\rout_3 + 2$ the function $F$ is defined on $V_3 (\x_j)$ for all $j \in Q_L$. For all $x \in \cup_{j \in Q_L} V_3 (\x_j)$, the number of indices $j \in Q_L$ such that $V_3 (\x_j) \ni x$ is bounded from above by $(2 \rout_3 + 2)^d$. Hence,
\[
\forall x \in \tilde X_{\rout_3}    \colon \quad \sum_{j \in Q_L} \Eins_{V_3 (\x_j)} (x) \leq (2 \rout_3 + 2)^d \chi_{\cup_{j \in Q_L} V_3 (\x_j)} (x)
\]
and thus
\begin{align*}
\sum_{j \in Q_L} \lVert F \rVert^2_{H^1 (V_3 (\x_j))}
&=
\int_{\tilde X_{\rout_3}} \Bigl( \sum_{j \in Q_L} \Eins_{V_3 (\x_j)} (x) \Bigr) \left( \lvert F(x) \rvert^2 + \lvert \nabla F (x) \rvert^2 \right) \drm x \\
&\leq
(2 \rout_3 + 2)^d \lVert F \rVert^2_{H^1 (\cup_{j \in Q_L} V_3 (\x_j))} .
\end{align*}
Hence we can take $K_d=(2 R_3 + 2)^d$.
\end{proof}
%
%
\subsection{Interpolation inequalities}
\begin{proposition} \label{prop:interpolation1}
For all $\delta \in (0,1/2)$, all $(1,\delta)$-equidistributed sequences $z_j$, all measurable and bounded $V: \RR^d \to \RR$, all $L \in \NN_{\mathrm{odd}}$, all $b \geq 0$ and all $\phi \in \mathrm{Ran} (\chi_{(-\infty,b]}(H_L))$
\begin{enumerate}[(a)]
 \item there is $\beta_1 = \beta_1 (d , \lVert V \rVert_\infty) \geq 1$ such that for all $\beta \geq \beta_1$ we have
 \[
 \lVert F \rVert_{H^1 (U_1(L))}^2 \leq \tilde D_1 (\beta) \lVert F \rVert_{H^1 (U_3 (L))}^2 +  \hat D_1 (\beta) \lVert (\partial_{d+1} F)_0 \rVert_{L^2 (W_\delta(L))}^2 ,
\]
where $\beta_1$ is given in Eq.~\eqref{eq:beta1}, and $\tilde D_1(\beta)$ and $\hat D_1(\beta)$ are given in Eq.~\eqref{eq:D_tilde}.
\item we have
\[
  \lVert  F\rVert_{H^1 (U_1 (L))} \leq D_1 \lVert (\partial_{d+1} F)_0 \rVert_{L^2 (W_\delta(L))}^{1/2} \lVert F \rVert_{H^1 (U_3 (L))}^{1/2} ,
\]
where $D_1$ is given in Eq.~\eqref{eq:C2}.
\end{enumerate}
\end{proposition}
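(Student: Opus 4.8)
The plan is to derive part (a) from the boundary Carleman estimate of Proposition~\ref{prop:carleman}, applied to the function $F$ on each translated half-ball $B_\rho^+(z_j)$, and then obtain part (b) from (a) by a standard optimization over the large parameter $\beta$. Since the geometry has been arranged so that $S_1 \subset S_2 \subset S_3 \subset B_\delta^+$ and the $B(z_j,\delta)$ are pairwise disjoint inside $\Lambda_L$ (Lemma~\ref{lemma:geometric}(i) and the definition of $W_\delta(L)$), it suffices to work on a single site $j \in Q_L$ and sum at the end.

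First I would fix $j \in Q_L$ and choose a cutoff $\eta \in C_c^\infty(\RR^{d+1})$ supported in $B_\rho^+(z_j)$ with $\eta \equiv 1$ on $S_2(z_j)$ (hence on $S_1(z_j)$), where $\rho$ is chosen in $(0,2-\sqrt 2)$ compatibly with the weight $\psi$; the translate of $\psi$ is used as the Carleman weight. Applying Proposition~\ref{prop:carleman} to $g = \eta F(\cdot + (z_j,0))$, one has $\Delta g = \eta \Delta F + 2\langle \nabla\eta,\nabla F\rangle + F\Delta\eta = \eta V_L F + (\text{commutator terms supported in } S_3(z_j)\setminus S_2(z_j))$, using $\Delta F = V_L F$ from the ghost-dimension construction. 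The term $\eta V_L F$ is absorbed into the left-hand side for $\beta$ large (depending on $\|V\|_\infty$), because the left side carries a factor $\beta^3$ while $\int e^{2\beta\psi}|V_L F|^2 \le \|V\|_\infty^2 \int e^{2\beta\psi}|F|^2$; this fixes $\beta_1 = \beta_1(d,\|V\|_\infty)$. The commutator terms are controlled by $\int_{S_3(z_j)\setminus S_2(z_j)} e^{2\beta\psi}(|\nabla F|^2 + |F|^2)$, and on this region $\psi \le \psi_2$, so they contribute at most $e^{2\beta\psi_2}\|F\|_{H^1(S_3(z_j))}^2$ (up to a $\beta$-polynomial and the boundary term's constant). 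On the left, restricting the integral to $S_1(z_j)$ where $\psi \ge \psi_1 > \psi_2$, one keeps $e^{2\beta\psi_1}\|F\|_{H^1(S_1(z_j))}^2$. The boundary term $\beta\int e^{2\beta\psi_0}|(\partial_{d+1}g)_0|^2$ is supported, after unpacking the cutoff, in $B(z_j,\delta)\subset W_\delta(L)$, and since $(\partial_{d+1}F)(x,0) = \sum\alpha_k\phi_k(x)$ equals $\phi$ on $\Lambda_L$, it is bounded by $\beta e^{0}\|(\partial_{d+1}F)_0\|_{L^2(B(z_j,\delta))}^2$ (using $\psi_0 \le 0$). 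Dividing through by $e^{2\beta\psi_1}$, summing over $j\in Q_L$, and using $S_i(z_j)\subset U_i(L)$ together with the finite-overlap bound of Lemma~\ref{lemma:geometric}(iii), one reads off $\tilde D_1(\beta)$ proportional to $C_1\,\text{poly}(\beta)\,e^{2\beta(\psi_2-\psi_1)} = C_1\,\text{poly}(\beta)\,e^{-2\beta\delta^2/16}$ and $\hat D_1(\beta)$ proportional to $C_1\beta\,e^{-2\beta\psi_1} = C_1\beta\,e^{\beta\delta^2/8}$, matching the forms announced in Eq.~\eqref{eq:D_tilde}.

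For part (b), I would start from the inequality in (a) and optimize in $\beta \ge \beta_1$. Write $a = \|F\|_{H^1(U_3(L))}^2$ and $c = \|(\partial_{d+1}F)_0\|_{L^2(W_\delta(L))}^2$. Since $\tilde D_1(\beta)$ decays exponentially in $\beta$ while $\hat D_1(\beta)$ grows exponentially, the right-hand side $\tilde D_1(\beta)a + \hat D_1(\beta)c$ is minimized, up to the polynomial-in-$\beta$ prefactors, by choosing $\beta$ so that the two terms balance, i.e.\ $\beta \sim \text{const}\cdot\log(a/c)$ (this is the classical "three-ball"/Hadamard interpolation mechanism). If this optimal $\beta$ is $\ge \beta_1$ one gets $\|F\|_{H^1(U_1(L))}^2 \lesssim a^{1-\theta}c^{\theta}$ with $\theta = \psi_1/\psi_3 = 1/4$ determined by the ratio of the weights at the three levels; massaging constants and noting that the claimed bound in (b) has exponents $1/2$ and $1/2$, the exponent there must come from the specific arithmetic of $\psi_1,\psi_2,\psi_3$ (indeed $2(\psi_2-\psi_1)=2(\psi_3-\psi_2)$ forces the symmetric split), which I would verify directly from the listed values $\psi_1=-\delta^2/16$, $\psi_2=-\delta^2/8$, $\psi_3=-\delta^2/4$. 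If the optimal $\beta$ falls below $\beta_1$ — which happens only when $c$ is not small compared to $a$ — then the desired bound follows trivially since $\|F\|_{H^1(U_1(L))} \le \|F\|_{H^1(U_3(L))}$ up to the constant $D_1$. Collecting the prefactors ($C_1$, the overlap constant $K_d$, the $\beta$-polynomials evaluated at the optimal $\beta$, and the cutoff constants, all depending only on $d$ and $\|V\|_\infty$ through $\beta_1$) yields the explicit $D_1$ of Eq.~\eqref{eq:C2}.

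The main obstacle I anticipate is bookkeeping rather than conceptual: one must (i) carefully verify that the cutoff $\eta$ can be chosen with $\{\nabla\eta \ne 0\}$ landing exactly in $S_3\setminus S_2$ while $\supp\eta \subset B_\rho^+$ — this is precisely what the geometric parameters $r_i, R_i$ and the level sets $\psi_i$ were engineered for, so it should reduce to checking the inclusions in Lemma~\ref{lemma:geometric}(i) and the inequalities among the $\psi_i$; and (ii) track the boundary term so that it is genuinely localized to $W_\delta(L)$ and not to a larger set — here one uses that $(\partial_{d+1}g)_0 = \eta_0\cdot(\partial_{d+1}F)_0 + (\partial_{d+1}\eta)_0 F_0$, and $F_0 = 0$ because each $\funs_k$ vanishes at $0$, so the boundary term involves only $(\partial_{d+1}F)_0$ on $\supp\eta_0 \subset B(z_j,\delta)$. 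Getting the exponential rates $e^{\pm\beta\delta^2/\text{const}}$ to match Eq.~\eqref{eq:D_tilde} and the $\beta_1$ formula of Eq.~\eqref{eq:beta1} exactly is the only place where real care with constants is needed.
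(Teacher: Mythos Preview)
Your approach is essentially the one the paper uses: apply the boundary Carleman estimate of Proposition~\ref{prop:carleman} to $\chi F$ with a cutoff supported in $S_3$ and equal to $1$ on $S_2$, absorb $\|V\|_\infty^2\int e^{2\beta\psi}|\chi F|^2$ into the $\beta^3$-term for $\beta\ge\beta_1$, use $\psi\le\psi_2$ on $S_3\setminus S_2$ for the commutator, $\psi\ge\psi_1$ on $S_1$ for the lower bound, sum over $j$, then optimize in $\beta$ for part~(b).

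A few corrections to your bookkeeping. First, the $S_i(z_j)$ are pairwise disjoint (they sit inside the disjoint balls $B(z_j,\delta)$), so no overlap constant $K_d$ is needed here; that constant enters only in Proposition~\ref{prop:interpolation2}. Second, the paper's $\tilde D_1(\beta)$ and $\hat D_1(\beta)$ carry \emph{no} polynomial prefactor in $\beta$: on the left you keep at least $\tfrac{\beta}{2}e^{2\beta\psi_1}\|F\|_{H^1(S_1)}^2$, and the boundary term on the right carries $C_1\beta$, so the two $\beta$'s cancel. Third, and most importantly, $\psi_3$ does not enter the constants of part~(a) at all, and your arithmetic ``$2(\psi_2-\psi_1)=2(\psi_3-\psi_2)$'' is false. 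The $1/2$ exponent in part~(b) comes from $\psi_2=2\psi_1$: the decay rate of $\tilde D_1$ is $2|\psi_2-\psi_1|=\delta^2/8$ and the growth rate of $\hat D_1$ is $2|\psi_1|=\delta^2/8$, so balancing gives equal exponents. Finally, the paper inserts a mollification step ($F\mapsto F_\epsilon$) because Proposition~\ref{prop:carleman} is stated for $C^\infty_{c,0}$ functions; you should mention this before applying the Carleman estimate.
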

\begin{proof}
First we recall that $\Delta F = V_L F$, $\partial_{d+1} F (x',0) = \phi (x')$ and $B_\delta^+ \supset S_3$.
Now we choose a cutoff function $\chi \in C^\infty (\RR^{d+1};[0,1])$ with $\supp \chi \subset \overline{S_3}$, $\chi (x) = 1$ if $x \in S_2$ and
\[
 \max\{\lVert \Delta \chi \rVert_\infty , \lVert \lvert \nabla \chi \rvert \rVert_\infty\} \leq \frac{\tilde\cut_1}{\delta^{4}} =: \cut_1,
\]
where $\tilde\cut_1 = \tilde\cut_1 (d)$ depends only on the dimension\ifthenelse{\boolean{journal}}{. This is due to the fact that the distance of $S_2$ and $\RR^{d+1}_+ \setminus S_3$ is bounded from below by $\delta^2 / 16$}{, see Appendix~\ref{app:constants}}.
Let $\varphi$ be a non-negative function in $C_{\mathrm c}^\infty (\mathbb{R}^d)$ with the properties that $\lVert \varphi \rVert_1 = 1$ and $\supp \varphi \subset B(1)$. For $\epsilon > 0$ we define $\varphi_\epsilon : \mathbb{R}^d \to \mathbb{R}_0^+$ by $\varphi_\epsilon (x) = \epsilon^{-d} \varphi (x/\epsilon)$. The function $\varphi_\epsilon$ belongs to $C_{\mathrm c}^\infty (\mathbb{R}^d)$ and satisfies $\supp \varphi_\epsilon \subset (\epsilon)$. Now we continuously extend the eigenfunctions $\phi_k:\Lambda_{RL} \to \RR$ to the set $\RR^d $ by zero and define for $\epsilon > 0$ the function $F_\epsilon : \RR^d \times \RR$ by
\[
 F_\epsilon (x , x_{d+1})
 =
 \sum_{\genfrac{}{}{0pt}{2}{k \in \NN}{E_k \leq b}} \alpha_k (\varphi_\epsilon \ast \phi_k) (x) \funs_k( x_{d+1}) .
\]
By construction, the function $g = \chi F_\epsilon$ is an element of $C_{\mathrm c , 0}^\infty (B_\delta^+)$. Hence, we can apply
Proposition~\ref{prop:carleman} with $g = \chi F_\epsilon$ and $\rho=1/2$ and obtain for all $\beta \geq \beta_0 \geq 1$
\begin{equation} \label{eq:carleman_apply}
  \int_{S_3} \euler^{2\beta \psi} \left( \beta  \lvert \nabla (\chi F_\epsilon) \rvert^2 + \beta^3 \lvert \chi F_\epsilon\rvert^2 \right)
  \leq
 C_1 \int_{S_3} \euler^{2 \beta \psi} \lvert \Delta (\chi F_\epsilon) \rvert^2  +  \beta C_1 \int_{B (\delta)} \euler^{2 \beta \psi_0} \lvert (\partial_{d+1} (\chi F_\epsilon))_0 \rvert^2
 .
 \end{equation}
Note that $\beta_0$ and $C_1$ only depend on the dimension.
By \cite[Theorem~1.6.1 (iii)]{Ziemer-89} we have $\varphi_\epsilon \ast \phi_k \to  \phi_k$, $\nabla (\varphi_\epsilon \ast \phi_k) \to  \nabla \phi_k$ and $\Delta (\varphi_\epsilon \ast \phi_k) \to \Delta \phi_k$ in $L^2 (S_3)$ as $\epsilon$ tends to zero. Consequently, the same holds for $F_\epsilon$, $\nabla F_\epsilon$ and $\Delta F_\epsilon$ and thus we obtain Ineq.~\eqref{eq:carleman_apply} with $F_\epsilon$ replaced by $F$.
For the first summand on the right hand side we have the upper bound
\begin{align*}
   \int_{S_3} \euler^{2 \beta \psi} \lvert \Delta (\chi F) \rvert^2
   &\leq 3 \int_{S_3} \euler^{2 \beta \psi} \left( 4 \lvert \nabla \chi \rvert^2 \lvert \nabla F \rvert^2 + \lvert \Delta \chi \rvert^2 \lvert F \rvert^2 + \lvert \Delta F\rvert^2 \lvert  \chi \rvert^2 \right) \\
   &\leq 3 \euler^{2 \beta \psi_2} \int_{S_3 \setminus S_2} \left( 4 \cut_1^2  \lvert \nabla F \rvert^2 + \cut_1^2 \lvert F \rvert^2 \right)  + \int_{S_3} 3\euler^{2 \beta \psi} \lvert V_L F \chi \rvert^2  \\
   &\leq 12 \cut_1^2 \euler^{2 \beta \psi_2}  \lVert F \rVert_{H^1 (S_3)}^2 +
        3  \lVert V \rVert_\infty^2 \int_{S_3} \euler^{2 \beta \psi}  \lvert \chi F \rvert^2 .
\end{align*}
The second summand is bounded from above by $\beta C_1 \int_{B(\delta )} \lvert (\partial_{d+1} F)_0 \rvert^2$, since $F = 0$ and $\psi \leq 0$ on $\{x_{d+1} = 0\}$. Hence,
\begin{multline*}
\beta \int_{S_3} \euler^{2\beta \psi}  \lvert \nabla (\chi F) \rvert^2  + (\beta^3 - 3\lVert V \rVert_\infty^2 C_1) \int_{S_3} \euler^{2\beta \psi} \lvert \chi F \rvert^2 \\
\leq
 12 C_1 \cut_1^2 \euler^{2 \beta \psi_2} \lVert F \rVert_{H^1 (S_3)}^2 +
C_1 \beta \lVert (\partial_{d+1} F)_0 \rVert_{L^2 (B (\delta))}^2 .
\end{multline*}
Additionally to $\beta \geq \beta_0$ we choose $\beta \geq (6 \lVert V \rVert_\infty^2 C_1)^{1/3} =: \tilde\beta_0$. This ensures that for all
\begin{equation} \label{eq:beta1}
\beta \geq \beta_1 := \max\{\beta_0 , \tilde\beta_0\}
\end{equation}
we have
\begin{equation*}
\frac{1}{2} \int_{S_3} \euler^{2\beta \psi} \left( \beta \lvert \nabla (\chi F) \rvert^2 + \beta^3\lvert \chi F \rvert^2 \right)
 \leq  12 C_1 \cut_1^2 \euler^{2 \beta \psi_2} \lVert F \rVert_{H^1 (S_3)}^2 +
 C_1 \beta \lVert (\partial_{d+1} F)_0 \rVert_{L^2 (B (\delta))}^2
.
\end{equation*}
Since $\beta \geq 1$, $S_3 \supset S_1$, $\chi = 1$ and $\euler^{2 \beta \psi} \geq \euler^{2 \beta \psi_1}$ on $S_1$, we obtain
\[
\euler^{2\beta \psi_1} \lVert F \rVert_{H^1 (S_1)}^2 \leq
24 C_1 \cut_1^2 \euler^{2 \beta \psi_2}  \lVert F \rVert_{H^1 (S_3)}^2 + 2 C_1 \lVert (\partial_{d+1} F )_0 \rVert_{L^2 (B (\delta))}^2 .
\]
We apply this inequality for translates $S_i (\x_j)$ and obtain by summing over $j \in Q_L = \ZZ^d \cap \Lambda_L$
\begin{equation*}
 \euler^{2\beta \psi_1} \sum_{j \in Q_L} \lVert F \rVert_{H^1 (S_1 (\x_j))}^2  \leq 24 C_1 \cut_1^2 \euler^{2 \beta \psi_2} \sum_{j \in Q_L}  \lVert F \rVert_{H^1 (S_3 (\x_j))}^2 + 2 C_1 \sum_{j \in Q_L}\lVert (\partial_{d+1} F)_0 \rVert_{L^2 (B (\x_j,\delta))}^2 .
\end{equation*}
Recall that $U_i (L) = \cup_{j \in Q_L} S_i (\x_j)$ and $W_\delta(L) = \cup_{j \in Q_L} B (\x_j , \delta)$. Hence, for all $\beta \geq \beta_1$ we have
\[
 \lVert F \rVert_{H^1 (U_1(L))}^2 \leq \tilde D_1 \lVert F \rVert_{H^1 (U_3 (L))}^2 +  \hat D_1 \lVert (\partial_{d+1} F)_0 \rVert_{L^2 (W_\delta(L))}^2 ,
\]
where
\begin{equation} \label{eq:D_tilde}
 \tilde D_1 (\beta) = 24 C_1 \cut_1^2 \euler^{2 \beta (\psi_2-\psi_1)}
 \quad \text{and} \quad
 \hat  D_1 (\beta)  = 2 C_1 \euler^{-2\beta \psi_1} .
\end{equation}
We choose $\beta$ such that
\begin{equation} \label{eq:beta}
 \euler^\beta = \left[ \frac{1}{12 \cut_1^2}  \frac{\lVert (\partial_{d+1} F)_0 \rVert_{L^2 (W_\delta(L)}^2}{\lVert F \rVert_{H^1 (U_3 (L))}^2} \right]^{\frac{1}{2\psi_2}} .
\end{equation}
Now we distinguish two cases. If $\beta \geq \beta_1$ we obtain by using $\psi_1 = 2 \psi_2$
\begin{equation} \label{eq:1}
 \lVert F \rVert_{H^1 (U_1(L))}^2 \leq 8 \sqrt{3} C_1 \cut_1 \lVert F \rVert_{H^1 (U_3 (L))}
 \lVert (\partial_{d+1} F)_0 \rVert_{L^2 (W_\delta(L))} .
\end{equation}
If $\beta < \beta_1$ we use Lemma~5.2 of \cite{RousseauL-12}. In particular, one concludes from Eq.~\eqref{eq:beta} that
\[
  \lVert F \rVert_{H^1 (U_3 (L))}^2  < \frac{1}{12\cut_1^2} \euler^{-2 \beta_1 \psi_2}  \lVert (\partial_{d+1} F)_0 \rVert_{L^2 (W_\delta(L))}^2.
\]
This gives us in the case $\beta < \beta_1$
\begin{equation}
 \lVert F \rVert_{H^1 (U_1(L))}^2 \leq \lVert F \rVert_{H^1 (U_3(L))}^{2}  
 < \frac{\euler^{- \beta_1 \psi_2} }{\sqrt{12} \cut_1}    \lVert F \rVert_{H^1 (U_3(L))}
 \lVert (\partial_{d+1} F)_0 \rVert_{L^2 (W_\delta(L))} .\label{eq:2}
\end{equation}
If we set
\begin{equation} \label{eq:C2}
   D_1^{2} = \max \left\{ 8 \sqrt{3} C_1 \cut_1 , \frac{\euler^{-\beta_1 \psi_2}}{\cut_1 \sqrt{12}} \right\}
,
\end{equation}
we conclude the statement of the proposition from Ineqs.~\eqref{eq:1} and \eqref{eq:2}.
\end{proof}
Now we deduce from the second Carleman estimate, Proposition~\ref{prop:carleman2}, another interpolation inequality.
\begin{proposition} \label{prop:interpolation2}
For all $\delta \in (0,1/2)$, all $(1,\delta)$-equidistributed sequences $z_j$, all measurable and bounded $V: \RR^d \to \RR$, all $L \in \NN_{\mathrm{odd}}$, all $b \geq 0$ and all $\phi \in \mathrm{Ran} (\chi_{(-\infty,b]}(H_L))$
\begin{enumerate}[(a)]
 \item there is $\alpha_1 = \alpha_1 (d , \lVert V \rVert_\infty) \geq 1$ such that for all $\alpha \geq \alpha_1$ we have
 \begin{equation*}
\lVert F \rVert_{H^1 (X_1)}^2
\leq
\tilde D_2 (\alpha) \lVert F \rVert_{H^1 (U_1 (L))}^2
+
\hat D_2 (\alpha) \lVert F \rVert_{H^1 (\tilde X_{\rout_3})}
,
\end{equation*}
where $\alpha_1$ is given in Eq.~\eqref{eq:alpha1}, and $\tilde D_2 (\alpha)$ and $\hat D_2 (\alpha)$ are given in Eq.~\eqref{eq:D_hat};
 \item we have
 \[
\lVert F \rVert_{H^1 (X_1)}
\leq D_2 \lVert F \rVert_{H^1 (U_1 (L))}^{\gamma} \lVert F \rVert_{H^1 (\tilde X_{\rout_3})}^{1- \gamma} ,
\]
where $\gamma$ and $D_2$ are given in Eq.~\eqref{eq:gamma} and \eqref{eq:D2}.
\end{enumerate}
\end{proposition}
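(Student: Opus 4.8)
The plan is to deduce Proposition~\ref{prop:interpolation2} from the second Carleman estimate, Proposition~\ref{prop:carleman2}, by a Hadamard three-region argument performed on the translates centred at the points $(\x_j,0)\in\RR^{d+1}$, $j\in Q_L$, and then summed with the help of Lemma~\ref{lemma:geometric}. Around each such centre the three nested regions are the thin inner shell $V_1(\x_j)$ (inner radii of order $\delta^2$), the intermediate annulus $V_2(\x_j)$ on which $F$ is to be controlled, and the outer shell $V_3(\x_j)$; they are correctly ordered since $\rin_1<\rout_1<\rin_2<\rout_2<\rin_3<\rout_3$ for every $\delta\in(0,1/2)$.

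For part~(a), fix $j\in Q_L$ and choose $\eta_j\in C^\infty(\RR^{d+1};[0,1])$ with $\eta_j\equiv 1$ on $B((\x_j,0),\rin_3)\setminus B((\x_j,0),\rout_1)$ (a set containing $V_2(\x_j)$), with $\eta_j\equiv 0$ on $B((\x_j,0),\rin_1)$ and outside $B((\x_j,0),\rout_3)$, and with the two transitions confined to $\overline{V_1(\x_j)}$ and $\overline{V_3(\x_j)}$; since $V_1$ has width of order $\delta^2$ while the width of $V_3$ depends only on $d$, one can arrange $\lVert\nabla\eta_j\rVert_\infty^2+\lVert\Delta\eta_j\rVert_\infty^2\le C(d)\delta^{-8}$. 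Apply Proposition~\ref{prop:carleman2} (with ambient dimension $d+1$, weight centred at $(\x_j,0)$, and $\rho=\rout_3$) to $u=\eta_jF$, which is admissible since $\supp u\subset B((\x_j,0),\rout_3)\setminus\{(\x_j,0)\}$; expand $\Delta u=(\Delta\eta_j)F+2\nabla\eta_j\cdot\nabla F+\eta_jV_LF$ using $\Delta F=V_LF$, and absorb the term $\eta_jV_LF$ into the left-hand side, which (since $w\le 1$ on $B((\x_j,0),\rout_3)$) is possible once $\alpha\ge\alpha_1:=\max\{\alpha_0,(2C_2\rout_3^4)^{1/3}\lVert V\rVert_\infty^{2/3},1\}$; this accounts for the dependence $\alpha_1=\alpha_1(d,\lVert V\rVert_\infty)$. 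The weight bounds $w\le\rout_2/\rout_3$ on $V_2(\x_j)$, $w\ge\rin_1/(\rout_3\euler)$ on $\overline{V_1(\x_j)}$ and $w\ge\rin_3/(\rout_3\euler)$ on $\overline{V_3(\x_j)}$ then turn the Carleman inequality, after dividing by its left-hand-side coefficient on $V_2(\x_j)$, into
\[
\lVert F\rVert_{H^1(V_2(\x_j))}^2\le C(d)\Bigl[\delta^{-4}\euler^{a\alpha}\lVert F\rVert_{H^1(V_1(\x_j))}^2+\euler^{-b\alpha}\lVert F\rVert_{H^1(V_3(\x_j))}^2\Bigr]\qquad(\alpha\ge\alpha_1),
\]
with $a$ of order $\log(1/\delta)$ up to a dimensional constant and $b=2\log 2$. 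Because the functions $\funs_k$ are odd, $F$ is odd in $x_{d+1}$, hence $\lVert F\rVert_{H^1(V_1(\x_j))}^2=2\lVert F\rVert_{H^1(V_1(\x_j)\cap\RR^{d+1}_+)}^2\le 2\lVert F\rVert_{H^1(S_1(\x_j))}^2$, the inclusion $B((\x_j,0),\rout_1)\cap\RR^{d+1}_+\subset S_1(\x_j)$ being precisely what the choice of $\rin_1,\rout_1$ guarantees. Summing over $j\in Q_L$ and invoking Lemma~\ref{lemma:geometric}(ii) (so $\lVert F\rVert_{H^1(X_1)}^2\le\sum_j\lVert F\rVert_{H^1(V_2(\x_j))}^2$), the disjointness of the half-balls $B((\x_j,0),\delta)$ (so $\sum_j\lVert F\rVert_{H^1(S_1(\x_j))}^2=\lVert F\rVert_{H^1(U_1(L))}^2$), and Lemma~\ref{lemma:geometric}(iii)--(iv) (so $\sum_j\lVert F\rVert_{H^1(V_3(\x_j))}^2\le K_d\lVert F\rVert_{H^1(\tilde X_{\rout_3})}^2$) now gives inequality~(a).

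Part~(b) is the standard balancing in $\alpha$, done as in the proof of Proposition~\ref{prop:interpolation1}(b). Since the claim is trivial when $\lVert F\rVert_{H^1(U_1(L))}=0$ (in~(a) let $\alpha\to\infty$), assume $\lVert F\rVert_{H^1(U_1(L))}>0$ and $\lVert F\rVert_{H^1(\tilde X_{\rout_3})}>0$ and pick $\alpha_\ast$ equating the two summands on the right of~(a), i.e.\ $\euler^{(a+b)\alpha_\ast}=\delta^4\lVert F\rVert_{H^1(\tilde X_{\rout_3})}^2/\lVert F\rVert_{H^1(U_1(L))}^2$. If $\alpha_\ast\ge\alpha_1$, inserting $\alpha=\alpha_\ast$ into~(a) yields $\lVert F\rVert_{H^1(X_1)}\le D_2\lVert F\rVert_{H^1(U_1(L))}^{\gamma}\lVert F\rVert_{H^1(\tilde X_{\rout_3})}^{1-\gamma}$ with $\gamma:=b/(a+b)$; if $\alpha_\ast<\alpha_1$, the defining relation gives $\lVert F\rVert_{H^1(\tilde X_{\rout_3})}^2<\delta^{-4}\euler^{(a+b)\alpha_1}\lVert F\rVert_{H^1(U_1(L))}^2$, which together with the trivial bound $\lVert F\rVert_{H^1(X_1)}\le\lVert F\rVert_{H^1(\tilde X_{\rout_3})}$ (valid since $X_1\subset\tilde X_{\rout_3}$) produces the same estimate with a slightly larger constant. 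Taking the larger constant produces $D_2=D_2(d,\delta,\lVert V\rVert_\infty)$ and, with the above $\gamma=\gamma(d,\delta)$, completes~(b).

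The crux, and the source of the $\delta$-dependence of all the constants, is the geometric bookkeeping in part~(a): the inner cut-off shell must fit inside $S_1(\x_j)$, whose thickness is only of order $\delta^2$ (itself forced by the parabolic weight $\psi$ of the first Carleman estimate), so the cut-off derivatives reach size $\delta^{-4}$ and the exponent $a$ reaches order $\log(1/\delta)$, whence $\gamma$ decays like $1/\log(1/\delta)$. Absorbing the potential, passing from $V_1$ to $S_1$ via the parity of $F$, the summation over $j$, and the optimisation in $\alpha$ are all routine once Lemma~\ref{lemma:geometric} and the three nested radii are in place.
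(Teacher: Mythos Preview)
Your proof is correct and follows essentially the same route as the paper: apply Proposition~\ref{prop:carleman2} in $\RR^{d+1}$ to $\eta_jF$ with $\rho=\rout_3$, absorb the potential term for $\alpha\ge\alpha_1$, exploit the two-sided bounds on $w$ to obtain a three-annulus inequality on $V_1(\x_j),V_2(\x_j),V_3(\x_j)$, use the oddness of $F$ in $x_{d+1}$ to pass from $V_1(\x_j)$ to $S_1(\x_j)$, sum over $j\in Q_L$ via Lemma~\ref{lemma:geometric}, and then optimise in $\alpha$.

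One genuine technical gap: your appeal to Lemma~\ref{lemma:geometric}(ii) to get $\lVert F\rVert_{H^1(X_1)}^2\le\sum_{j\in Q_L}\lVert F\rVert_{H^1(V_2(\x_j))}^2$ requires the covering $\cup_{j\in Q_L}V_2(\x_j)\supset X_1$, but that lemma is stated (and proved) only for $L\ge 5$; for $L\in\{1,3\}$ the single-layer cover fails because the inner ball $B(\rin_2)$ punches a hole. The paper fixes this by using the periodicity of the extended $F$ on $\Lambda_{RL}$: one has $\lVert F\rVert_{H^1(V_2(\x_j))}=\lVert F\rVert_{H^1(V_2(\x_j+kL))}$ for $k\in Q_5$, so $\sum_{j\in Q_L}\lVert F\rVert_{H^1(V_2(\x_j))}^2=5^{-d}\sum_{k\in Q_5}\sum_{j\in Q_L}\lVert F\rVert_{H^1(V_2(\x_j+kL))}^2$, and now Lemma~\ref{lemma:geometric}(ii) applies with $L$ replaced by $5L\ge 5$, yielding $\ge 5^{-d}\lVert F\rVert_{H^1(X_1)}^2$. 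This is why $5^d$ appears in~\eqref{eq:D2}. A minor bookkeeping slip: the prefactor on the $V_1$-term should be $\delta^{-8}$ (since it is $\Theta_2^2$ with $\Theta_2\sim\delta^{-4}$), not $\delta^{-4}$ as in your display; this does not affect the structure, only the explicit constants.
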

\begin{proof}
We choose a cutoff function $\chi \in C_{\mathrm c}^\infty (\RR^{d+1};[0,1])$ with $\supp \chi \subset B(\rout_3) \setminus \overline{B (r_1)}$,
$\chi (x) = 1$ if $x \in B(\rin_3) \setminus \overline{B(\rout_1)}$,
\[
 \max\{\lVert \Delta \chi \rVert_{\infty , V_1} , \lVert \lvert \nabla \chi \rvert \rVert_{\infty , V_1}\} \leq \frac{\tilde\cut_2}{\delta^4} =: \cut_2
\]
and
\[
 \max\{\lVert \Delta \chi \rVert_{\infty , V_3} , \lVert \lvert \nabla \chi \rvert \rVert_{\infty , V_3}\} \leq \cut_3 ,
\]
where $\tilde \cut_2$ depends only on the dimension and $\cut_3$ is an absolute constant\ifthenelse{\boolean{journal}}{}{, see Appendix~\ref{app:constants}}.
We set $u = \chi F$. We apply Proposition~\ref{prop:carleman2} with $\rho = \rout_3$ to the function $u$ and obtain for all $\alpha \geq \alpha_0 \geq 1$
 \[
\int_{B (\rout_3)} \left( \alpha {R_3^2} w^{1-2\alpha} \lvert \nabla u \rvert^2 + \alpha^3 w^{-1-2\alpha} \lvert u \rvert^2  \right) \drm x
\leq C_2 \rout_3^4 \int_{B (\rout_3)} w^{2-2\alpha} \lvert \Delta u \rvert^2 \drm x .
\]
Since $w \leq 1$ on $B(\rout_3)$ we can replace the exponent of the weight function $w$ at all three places by $2 - 2 \alpha$, i.e.
\begin{equation}\label{eq:I123}
 \int_{B (\rout_3)} \left( \alpha {R_3^2} w^{2-2\alpha} \lvert \nabla u \rvert^2 + \alpha^3 w^{2-2\alpha} \lvert u \rvert^2  \right) \drm x
\leq C_2 \rout_3^4 \int_{B (\rout_3)} w^{2-2\alpha} \lvert \Delta u \rvert^2 \drm x =: I .
\end{equation}
For the right hand side we use
\[
\Delta u = 2 (\nabla \chi)(\nabla F) + (\Delta \chi) F + (\Delta F) \chi,
\]
and $\Delta F = V_L F$, and obtain
\begin{align*}
I \leq 3 C_2 \rout_3^4 \int_{B (\rout_3)} \!\!\!\!\!\!\!\!\! w^{2-2\alpha} \left( 4\lvert (\nabla \chi)(\nabla F) \rvert^2 + \lvert (\Delta \chi) F  \rvert^2 + \lVert V \rVert_\infty^2 \lvert \chi F \rvert^2  \right) \drm x =: I_1 + I_2 + I_3.
\end{align*}
If we choose $\alpha$ sufficiently large, i.e.
\[
 \alpha \geq \left( 6 C_2 \rout_3^4 \lVert V \rVert_\infty^2 \right)^{1/3} =: \tilde \alpha_0 ,
\]
we can subsume the term $I_3$ into the left hand side of Ineq.~\eqref{eq:I123}. We obtain for all
\begin{equation} \label{eq:alpha1}
\alpha \geq \alpha_1 := \max\{\alpha_0 , \tilde\alpha_0\}
\end{equation}
the estimate
\[
 \int_{B (\rout_3)} \left( \alpha {R_3^2} w^{2-2\alpha} \lvert \nabla u \rvert^2 + \frac{\alpha^3}{2} w^{2-2\alpha} \lvert u \rvert^2  \right) \drm x
\leq I_1 + I_2.
\]
For the ``new'' left hand side we have the lower bound
\[
I_1 + I_2 \geq \int_{B (\rout_3)} \left( \alpha {R_3^2} w^{2-2\alpha} \lvert \nabla u \rvert^2 + \frac{\alpha^3}{2} w^{2-2\alpha} \lvert u \rvert^2  \right) \drm x  \geq \frac{1}{2} \left( \frac{\rout_3}{\rout_2} \right)^{2\alpha - 2} \lVert F \rVert_{H^1 (V_2)}^2 .
\]
For $I_1$ and $I_2$ we have the estimates
\[
I_1 \leq 3 C_2 \rout_3^4 \left[ 4 \cut_2^2 \left(\frac{\mathrm{e} \rout_3}{\rin_1}\right)^{2\alpha - 2}  \int_{V_1} \lvert \nabla F \rvert^2 + 4\cut_3^2 \left(\frac{\mathrm{e} \rout_3}{\rin_3}\right)^{2\alpha - 2}  \int_{V_3} \lvert \nabla F \rvert^2
\right]
\]
and
\[
I_2 \leq 3 C_2 \rout_3^4 \left[\cut_2^2 \left(\frac{\mathrm{e} \rout_3}{\rin_1}\right)^{2 \alpha - 2}  \int_{V_1} \lvert F \rvert^2 + \cut_3^2 \left(\frac{\mathrm{e} \rout_3}{\rin_3}\right)^{2 \alpha - 2}  \int_{V_3} \lvert F \rvert^2
\right] .
\]
Putting everything together, the Carleman estimate from Proposition~\ref{prop:carleman2} implies for $\alpha \geq \alpha_1$
\begin{equation} \label{eq:before_sum}
\lVert F \rVert_{H^1 (V_2)}^2 \leq 24 C_2 \rout_3^4
\left[ \cut_2^2 \left(\frac{\mathrm{e} \rout_2}{\rin_1}\right)^{2\alpha - 2} \!\!\! \lVert F \rVert_{H^1 (V_1)}^2
+ \cut_3^2 \left(\frac{\mathrm{e} \rout_2}{\rin_3}\right)^{2 \alpha - 2}\!\!\! \lVert F \rVert^2_{H^1 (V_3)} \right] .
\end{equation}
By translation, Ineq.~\eqref{eq:before_sum} is still true if we replace $V_1$, $V_2$ and $V_3$ by its translates $V_1 (\x_j)$, $V_2 (\x_j)$ and $V_3 (\x_j)$ for all $j \in Q_L$. Hence,
\begin{multline} \label{eq:summing}
\sum_{j \in Q_L} \lVert F \rVert_{H^1 (V_2 (\x_j))}^2
\leq 24 C_2 \rout_3^4
\left[\cut_2^2 \left(\frac{\mathrm{e} \rout_2}{\rin_1}\right)^{2\alpha - 2}
\sum_{j \in Q_L} \lVert F \rVert_{H^1 (V_1 (\x_j))}^2
\right. \\
\left.+ \cut_3^2  \left(\frac{\mathrm{e} \rout_2}{\rin_3}\right)^{2 \alpha - 2}
\sum_{j \in Q_L} \lVert F \rVert^2_{H^1 (V_3 (\x_j))} \right] .
\end{multline}
For all $L \in \NN_{\mathrm{odd}}$ Lemma~\ref{lemma:geometric} tells us that $\cup_{k \in Q_5}\cup_{j \in Q_L} V_2 (\x_j+kL) \supset X_1 = \Lambda_L \times [-1,1]$ and the left hand side is bounded from below by
\[
\sum_{j \in Q_L} \lVert F \rVert_{H^1 (V_2 (\x_j))}^2
=
\frac{1}{5^d} \sum_{k \in Q_5} \sum_{j \in Q_L}  \lVert F \rVert_{H^1 ( V_2 (\x_j + kL))}^2 \geq \frac{1}{5^d} \lVert F \rVert^2_{H^1 (X_1)} .
\]
Since $V_1 (\x_j) \cap \RR^{d+1}_+ \subset S_1 (\x_j)$, $S_1 (\x_i) \cap S_1 (\x_j) = \emptyset$ for $i \not = j$, and since $F$ is antisymmetric with respect to its last coordinate, we have
\[
\sum_{j \in Q_L} \lVert F \rVert_{H^1 (V_1 (\x_j))}^2 \leq
2  \sum_{j \in Q_L} \lVert F \rVert_{H^1 ( S_1 (\x_j))}^2
= 2 \lVert F \rVert_{H^1 (U_1 (L))}^2 .
\]
For the second summand on the right hand side of Ineq.~\eqref{eq:summing}, we find by Lemma~\ref{lemma:geometric} (iii) that there exists a constant $K_d$ such that
\[
\sum_{j \in Q_L} \lVert F \rVert^2_{H^1 (V_3 (\x_j))} \leq K_d \lVert F \rVert^2_{H^1 (\cup_{j \in Q_L} V_3 (\x_j))} .
\]
Moreover, since $\cup_{j \in Q_L} V_3 (\x_j) \subset \tilde X_{\rout_3} = \Lambda_{L + \rout_3} \times \left[-\rout_3 , \rout_3 \right]$, we have
\[
\sum_{j \in Q_L} \lVert F \rVert^2_{H^1 (V_3 (\x_j))} \leq
 K_d \lVert F \rVert_{H^1 (\tilde X_{\rout_3})} .
\]
Putting everything together we obtain for all $\alpha \geq \alpha_1$
\begin{equation} \label{eq:summing2}
\frac{1}{5^d} \lVert F \rVert_{H^1 (X_1)}^2
\leq
\tilde D_2 (\alpha) \lVert F \rVert_{H^1 (U_1 (L))}^2
+
\hat D_2 (\alpha) \lVert F \rVert_{H^1 (\tilde X_{\rout_3})}
,
\end{equation}
where
\begin{equation} \label{eq:D_hat}
 \tilde D_2 (\alpha) =  48 C_2 \rout_3^4 \cut_2^2  \left(\frac{\mathrm{e} \rout_2}{\rin_1}\right)^{2\alpha - 2}
 \ \text{and} \
 \hat D_2 (\alpha) =24 C_2 \rout_3^4 \cut_3^2 K_d \left(\frac{\mathrm{e} \rout_2}{\rin_3}\right)^{2 \alpha - 2} .
\end{equation}
If we let $c_1 = 48 C_2 \cut_2^2 \rout_3^4 \rin_1^2 / (\mathrm{e} \rout_2)^2$, $c_2 = 24 C_2 \cut_3^2 K_d \rout_3^4  \rin_3^2 / (\mathrm{e} \rout_2)^2$,
\[
p^+ = 2 \ln \left( \frac{\mathrm{e}\rout_2}{\rin_1} \right) > 0 \quad \text{and} \quad p^- = 2 \ln \left( \frac{\mathrm{e}\rout_2}{\rin_3} \right) < 0,
\]
then Ineq.~\eqref{eq:summing2} reads
\begin{equation} \label{eq:summing3}
\frac{1}{5^d}\lVert F \rVert_{H^1 (X_1)}^2
\leq c_1 \mathrm{e}^{p^+ \alpha} \lVert F \rVert_{H^1 (U_1 (L))}^2 +
c_2 \mathrm{e}^{p^- \alpha} \lVert F \rVert_{H^1 (\tilde X_{\rout_3})}^2  .
\end{equation}
We choose $\alpha$ such that
\begin{equation} \label{eq:alpha_choice}
\mathrm{e}^{\alpha} = \left( \frac{c_2}{c_1} \frac{\lVert F \rVert_{H^1 (\tilde X_{\rout_3})}^2}{ \lVert F \rVert_{H^1 (U_1 (L))}^2} \right)^{\frac{1}{p^+ - p^-}} .
\end{equation}
If $\alpha \geq \alpha_1$ we obtain from Ineq.~\eqref{eq:summing3} that
\begin{equation} \label{eq:3}
\frac{1}{5^d} \lVert F \rVert_{H^1 (X_1)}^2
\leq 2 c_1^{\gamma} c_2^{1 - \gamma} \lVert F \rVert_{H^1 (U_1 (L))}^{2\gamma} \lVert F \rVert_{H^1 (\tilde X_{\rout_3})}^{2- 2\gamma}, \quad \text{where} \quad \gamma = \frac{-p^-}{p^+ - p^-}  .
\end{equation}
If $\alpha < \alpha_1$, we proceed as in the last part of the proof of Proposition~\ref{prop:interpolation1}, i.e.\ we conclude from Eq.~\eqref{eq:alpha_choice} that
\[
 \lVert F \rVert_{H^1 (\tilde X_{\rout_3})}^2 < \frac{c_1}{c_2} \mathrm{e}^{\alpha_1 (p^+ - p^-)} \lVert F \rVert_{H^1 (U_1 (L))}^2
\]
and thus
\begin{equation} \label{eq:4}
\lVert F \rVert_{H^1 (X_1)}^2 \leq
\lVert F \rVert_{H^1 (\tilde X_{\rout_3})}^{2 \frac{p^+ - p^-}{p^+ - p^-}} <
 \lVert F \rVert_{H^1 (\tilde X_{\rout_3})}^{2 (1-\gamma)}
\left( \frac{c_1}{c_2} \mathrm{e}^{\alpha_1 (p^+ - p^-)} \right)^{\gamma}
\lVert F \rVert_{H^1 (U_1 (L))}^{2 \gamma} .
\end{equation}
We calculate
\begin{equation} \label{eq:gamma}
\gamma = \frac{\ln 2}{\ln (\rin_3 / \rin_1)},
\end{equation}
set
\begin{equation} \label{eq:D2}
 D_2^2 = \max\left\{5^d 192\cdot 9^4 C_2 \cut_3^2 K_d \mathrm{e}^4 d^2\left(\frac{2 \cut_2^2 \rin_1^2}{\cut_3^2 K_d \rin_3^2}\right)^\gamma \, , \, \left( \frac{2 \cut_2^2}{\cut_3^2 K_d} \left( \frac{\rin_3}{\rin_1} \right)^{2(\alpha_1-1)} \right)^\gamma  \right\}
\end{equation}
and conclude the statement of the proposition from Ineqs.~\eqref{eq:3} and \eqref{eq:4}.
\end{proof}
%
%
\subsection{Proof of Theorem~\ref{thm:result1} and Corollary~\ref{cor:eigenvalue}} \label{section:proof_main_result}
\begin{proposition} \label{prop:upper_lower}
For all $T > 0$, all measurable and bounded $V : \RR^d \to \RR$, all $L \in \NN_{\mathrm{odd}}$, all $b \geq 0$ and all $\phi \in \mathrm{Ran} (\chi_{(-\infty,b]}(H_L))$ we have
\[
\frac{T}{2} \!\!\!\sum_{\genfrac{}{}{0pt}{2}{k \in \NN}{E_k \leq b}} \lvert \alpha_k \rvert^2 \leq \frac{\lVert F \rVert_{H^1 (\Lambda_{\R L} \times [-T,T])}^2}{R^d}
\leq 2 T (1 + (1+\lVert V \rVert_\infty)T^2) \!\!\! \sum_{\genfrac{}{}{0pt}{2}{k \in \NN}{E_k \leq b}} \!\!\! \beta_k(T)  \lvert \alpha_k \rvert^2 ,
\]
where
\[
 \beta_k(T) = \begin{cases}
	      1 & \text{if}\ E_k \leq 0,  \\
	      \mathrm{e}^{2T \sqrt{E_k} } & \text{if} \ E_k > 0 . \\
           \end{cases}
\]
\end{proposition}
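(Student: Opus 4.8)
The plan is to reduce everything to the orthogonality of the extended eigenfunctions and to elementary properties of the functions $\funs_k$. Throughout I write $t = x_{d+1}$, and I recall that, since $H_L$ is lower semibounded with purely discrete spectrum, the sum defining $F$ runs only over the finitely many $k$ with $E_k \le b$, so interchanging sums with integrals is harmless. From Section~\ref{sec:extension} I shall use that the extended $\phi_k$ satisfy $\langle \phi_k , \phi_l \rangle_{L^2(\Lambda_{\R L})} = \R^d \delta_{kl}$, that $\lVert V_L \rVert_{L^\infty(\Lambda_{\R L})} \le \lVert V \rVert_\infty$, that $\Delta \phi_k = (V_L - E_k)\phi_k$ on $\Lambda_{\R L}$ with the relevant boundary conditions, and (by differentiating each of the three cases) that $\funs_k'' = E_k \funs_k$.

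First I would split, using Fubini and $\lvert \nabla F \rvert^2 = \lvert \nabla_x F \rvert^2 + \lvert \partial_{d+1} F \rvert^2$,
\[
 \frac{\lVert F \rVert_{H^1(\Lambda_{\R L} \times [-T,T])}^2}{\R^d}
 = \int_{-T}^{T} \frac{1}{\R^d} \int_{\Lambda_{\R L}} \bigl( \lvert F \rvert^2 + \lvert \nabla_x F \rvert^2 + \lvert \partial_{d+1} F \rvert^2 \bigr) \, \drm x \, \drm t .
\]
By orthogonality the first and third inner integrals equal $\R^d \sum_k \lvert \alpha_k \rvert^2 \funs_k(t)^2$ and $\R^d \sum_k \lvert \alpha_k \rvert^2 \funs_k'(t)^2$ (sums over $E_k \le b$). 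For the middle one I would integrate by parts in $x$ — the boundary term vanishes precisely because of the boundary conditions built into the extension — and insert $\Delta \phi_k = (V_L - E_k)\phi_k$, which gives
\[
 \frac{1}{\R^d} \int_{\Lambda_{\R L}} \lvert \nabla_x F \rvert^2 \, \drm x
 = \sum_k E_k \lvert \alpha_k \rvert^2 \funs_k(t)^2 - \frac{1}{\R^d} \int_{\Lambda_{\R L}} V_L \lvert u_t \rvert^2 \, \drm x , \qquad u_t := \sum_k \alpha_k \funs_k(t) \phi_k .
\]
Since $\bigl\lvert \int_{\Lambda_{\R L}} V_L \lvert u_t \rvert^2 \bigr\rvert \le \lVert V \rVert_\infty \lVert u_t \rVert_{L^2(\Lambda_{\R L})}^2 = \lVert V \rVert_\infty \R^d \sum_k \lvert \alpha_k \rvert^2 \funs_k(t)^2$ (again by orthogonality), collecting the three pieces and integrating in $t$ yields
\[
 \frac{\lVert F \rVert_{H^1(\Lambda_{\R L} \times [-T,T])}^2}{\R^d} \le \sum_{k} \lvert \alpha_k \rvert^2 \int_{-T}^{T} \Bigl( (1 + \lVert V \rVert_\infty) \funs_k(t)^2 + E_k \funs_k(t)^2 + \funs_k'(t)^2 \Bigr) \, \drm t ,
\]
while for the lower bound I would simply discard the nonnegative contributions $\lvert F \rvert^2$ and $\lvert \nabla_x F \rvert^2$, leaving $\lVert F \rVert_{H^1}^2 / \R^d \ge \sum_k \lvert \alpha_k \rvert^2 \int_{-T}^{T} \funs_k'(t)^2 \, \drm t$.

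It then remains to estimate the scalar integrals case by case. If $E_k > 0$, set $\omega_k = \sqrt{E_k}$: from $\sinh x \le x \euler^x$ for $x \ge 0$ one gets $\funs_k(t)^2 \le t^2 \euler^{2\omega_k \lvert t \rvert} \le T^2 \euler^{2\omega_k T}$ for $\lvert t \rvert \le T$, while $E_k \funs_k(t)^2 + \funs_k'(t)^2 = \cosh(2\omega_k t) \le \euler^{2\omega_k T}$, so the integral is at most $2T \euler^{2\omega_k T}\bigl( 1 + (1 + \lVert V \rVert_\infty) T^2 \bigr) = 2T \bigl( 1 + (1 + \lVert V \rVert_\infty) T^2 \bigr) \beta_k(T)$. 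If $E_k = 0$ then $\funs_k(t) = t$, $\funs_k'(t) = 1$ and the integral equals $\tfrac{2}{3}(1 + \lVert V \rVert_\infty) T^3 + 2T \le 2T \bigl( 1 + (1 + \lVert V \rVert_\infty) T^2 \bigr) \beta_k(T)$. If $E_k < 0$, set $\omega_k = \sqrt{\lvert E_k \rvert}$: then $\funs_k(t)^2 = \sin^2(\omega_k t)/\omega_k^2 \le t^2 \le T^2$ and $E_k \funs_k(t)^2 + \funs_k'(t)^2 = \cos(2\omega_k t) \le 1$, again giving at most $2T\bigl( 1 + (1 + \lVert V \rVert_\infty) T^2 \bigr) \beta_k(T)$. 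Summing over $k$ with $E_k \le b$ gives the stated upper bound. For the lower bound, $\int_{-T}^{T} \funs_k'(t)^2 \, \drm t \ge 2T$ whenever $E_k \ge 0$ (since $\funs_k'(t)^2 \equiv 1$ or $= \cosh^2(\omega_k t) \ge 1$), and for $E_k < 0$,
\[
 \int_{-T}^{T} \funs_k'(t)^2 \, \drm t = \int_{-T}^{T} \cos^2(\omega_k t) \, \drm t = T + \frac{\sin(2\omega_k T)}{2\omega_k} \ge \frac{T}{2} ,
\]
as one sees by distinguishing $\omega_k T \le 1$ (where $\sin(2\omega_k T) \ge 0$ since $2\omega_k T < \pi$) from $\omega_k T > 1$ (where $\lvert \sin(2\omega_k T) \rvert/(2\omega_k) \le 1/(2\omega_k) < T/2$); summing yields $\lVert F \rVert_{H^1}^2 / \R^d \ge \tfrac{T}{2} \sum_k \lvert \alpha_k \rvert^2$.

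I do not anticipate a genuine obstacle here: once the orthogonality relations of Section~\ref{sec:extension} are available, the argument is entirely elementary. The only two points that require a little care are the vanishing of the boundary term when integrating $\lvert \nabla_x F \rvert^2$ by parts — which is exactly what the reflection (for Dirichlet and Neumann) or periodic extension was set up to guarantee — and the uniformity over all frequencies of the lower bound $\int_{-T}^{T} \funs_k'(t)^2 \, \drm t \ge T/2$ in the oscillatory regime $E_k < 0$.
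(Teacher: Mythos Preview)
Your argument is correct and follows essentially the same route as the paper's own proof: split the $H^1$ norm, handle the spatial gradient by integrating by parts and inserting the eigenvalue equation, absorb the $V_L$-term via $\lVert V \rVert_\infty$, and reduce to the one-dimensional integrals of $\funs_k$ and $\funs_k'$, which are then estimated case by case. The only cosmetic differences are that the paper keeps both $|F|^2$ and $|\partial_{d+1} F|^2$ for the lower bound (you keep only the latter, which still suffices) and that the paper integrates $\funs_k'^2 + \funs_k''\funs_k$ by parts to $\funs_k'(T)\funs_k(T)$, whereas you use the closed-form identities $E_k \funs_k^2 + \funs_k'^2 = \cosh(2\omega_k t)$ resp.\ $\cos(2\omega_k t)$ directly.
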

\begin{proof}
For the function $F : \Lambda_{\R L} \times \RR \to \CC$ we have for $T > 0$
\[
\lVert F \rVert_{H^1 (\Lambda_{\R L} \times [-T,T])}^2
= \int_{-T}^T \int_{\Lambda_{\R L}} \left( 
\lvert \partial_{d+1} F \rvert^2 +
\lvert \nabla' F \rvert^2+
\lvert F \rvert^2 
\right) \drm x .
\] 
Note that $\lVert \phi_k \rVert_{L^2 (\Lambda_{\R L})} = \R^d$. By Green's theorem we have
\[
  \int_{\Lambda_{\R L}} \lvert \nabla' F \rvert^2 \drm x' = \int_{\Lambda_{\R L}} (-\sum_{i=1}^d \partial^2_i F) \overline{F} \drm x' = - \int_{\Lambda_{\R L}} V\lvert F\rvert^2 \drm x' + 
 \int_{\Lambda_{\R L}} (\partial^2_{d+1} F) \overline{F} \drm x' 
\]
for all $x_{d+1} \in \RR$. 
First we estimate  
\begin{align*}
 \lVert F \rVert_{H^1 (\Lambda_{\R L} \times [-T,T])}^2
 &=
 \int_{-T}^T \int_{\Lambda_{\R L}} \left( 
 \lvert \partial_{d+1} F \rvert^2  - V\lvert F\rvert^2 + (\partial^2_{d+1} F) \overline{F}
 + \lvert F \rvert^2 \right) \drm x \\
 &\leq
 \int_{-T}^T \int_{\Lambda_{\R L}} \left( 
\lvert \partial_{d+1} F \rvert^2  + (\partial^2_{d+1} F) \overline{F}
+ (1+ \lVert V \rVert_\infty) \lvert F \rvert^2 \right) \drm x \\[1ex]
&=  2\R^d  \sum_{\genfrac{}{}{0pt}{2}{k \in \NN}{E_k \leq b}} \lvert \alpha_k\rvert^2  I_k ,
\end{align*}
where
\begin{align*}
  I_k 
 &:= \int_{0}^T \left( (1+\lVert V \rVert_\infty)\funs_k(x_{d+1})^2 + \funs'_k(x_{d+1})^2 + \funs''_k(x_{d+1})\funs_k(x_{d+1}) \right) \drm x_{d+1} \\
 &= (1+\lVert V \rVert_\infty) \int_0^T  \funs_k(x_{d+1})^2 \drm x_{d+1} + \funs'_k(T)\funs_k(T).
\end{align*}
If $E_k \leq 0$, we estimate using $s_k(t) \leq t$ and $s_k'(t) s_k(t) \leq t$ for $t > 0$
\[
 I_k \leq (1 + \lVert V \rVert_\infty) T^3/3 + T 
 \leq 
 ((1 + \lVert V \rVert_\infty) T^3 + T ) \beta_k(T).
\]
For $E_k > 0$ we use $\sinh(\omega_k t)/\omega_k \leq t \cosh(\omega_k t)$ for $t > 0$ and $\cosh(\omega_k T)^2 \leq \euler^{2 \omega_k T}$ to obtain
\begin{align*}
 I_k &= (1+\lVert V \rVert_\infty) \int_0^T \frac{\sinh^2(\omega_kx_{d+1})}{\omega_k^2} \drm x_{d+1} + \sinh(\omega_k T) \cosh(\omega_k T)/\omega_k\\
&\leq ((1 + \lVert V \rVert_\infty ) T^3 \cosh^2(\omega_k T) + T \cosh^2(\omega_k T) ) \leq ((1 + \lVert V \rVert_\infty ) T^3 + T) \beta_k(T).
\end{align*}
This shows the upper bound.
For the lower bound we drop the gradient term and obtain
 \begin{align*}
\lVert F \rVert_{H^1 (\Lambda_{\R L} \times [-T,T])}^2 &\ge 
\int_{-T}^T \int_{\Lambda_{\R L}} \left( 
\lvert \partial_{d+1} F \rvert^2 +
\lvert F \rvert^2 
\right) \drm x = 
2 \cdot \R^d \sum_{\genfrac{}{}{0pt}{2}{k \in \NN}{E_k \leq b}} \lvert \alpha_k \rvert^2 \tilde I_k ,
\end{align*}  
where 
\[
 \tilde I_k := \int_{0}^T\left[ \funs_k(x_{d+1})^2+\funs_k'(x_{d+1})^2 \right] \drm x_{d+1} .
\]
If $E_k = 0$, the lower bound $\tilde I_k \geq T$ follows immediately.
Else, we have $\funs_k(t)^2 \geq \sin^2(\omega_k t)/\omega_k$ and $\funs'_k(t)^2 \geq \cos(\omega_k t)$ whence
\[
 \tilde I_k \geq 
 \int_0^T \frac{\sin^2(\omega_k x_{d+1})}{\omega_k^2} + \cos^2(\omega_k x_{d+1}) \drm x_{d+1}
 \geq
 \int_0^T \cos^2(\omega_k x_{d+1}) \drm x_{d+1}
 =
 \frac{T}{2} + \frac{\sin(2 \omega_k T)}{4 \omega_k}.
\]
Now, if $2 \omega_k T < \pi$, the sinus term is positive and we drop it to find $ \tilde I_k \geq 
 T/2$.
If $2 \omega_k T \geq \pi$, we have $\sin(2 \omega_k T ) \geq -1$ and estimate
\[
 \tilde I_k 
 \geq
 \frac{T}{2} - \frac{1}{4 \omega_k} 
 =
 \frac{T}{2} - \frac{\pi}{4 \pi\omega_k} 
 \geq
 \frac{T}{2} - \frac{T}{2 \pi} 
 \geq
 \frac{T}{4}.
 \qedhere
\]
\end{proof}
\begin{proof}[Proof of Theorem~\ref{thm:result1}]
First we consider the case $L \in \NN_{\mathrm{odd}}$. We note that Proposition~\ref{prop:upper_lower} remains true if we replace $\Lambda_{\R L}$ by $\Lambda_{L}$ and $\R^d$ by 1, i.e.\ for all $T > 0$ and $L \in \NN_{\mathrm{odd}}$ we have
\begin{equation}\label{eq:lower}
\frac{T}{2} \!\!\! \sum_{\genfrac{}{}{0pt}{2}{k \in \NN}{E_k \leq b}} \lvert \alpha_k \rvert^2 \leq \lVert F \rVert_{H^1 (\Lambda_{L} \times [-T,T])}^2
\leq 2 T (1 + (1+\lVert V \rVert_\infty)T^2) \!\!\! \sum_{\genfrac{}{}{0pt}{2}{k \in \NN}{E_k \leq b}} \!\!\! \beta_k(T)  \lvert \alpha_k \rvert^2 .
\end{equation}
We have $\tilde X_{\rout_3} \subset \Lambda_{\R L} \times [-R_3 , R_3]$. By Ineq.~\eqref{eq:lower} and Proposition~\ref{prop:upper_lower} we have
\begin{align*}
\frac{\lVert F \rVert_{H^1 (\tilde X_{\rout_3})}^2 }{\lVert F \rVert_{H^1 (X_1)}^2 }
\leq
\frac{\lVert F \rVert_{H^1 (\Lambda_{\R L} \times [-\rout_3, \rout_3])}^2 }{\lVert F \rVert_{H^1 (X_1)}^2}
\leq \tilde D_3^2 D_4^2
\end{align*}
with
\[
 \tilde D_3^2 = \frac{\sum_{E_k \leq b} \theta_k  \lvert \alpha_k \rvert^2}{\sum_{E_k \leq b}  \lvert \alpha_k \rvert^2} \quad \text{and} \quad D_4^2 = 4\cdot \R^d \rout_3 (1 + (1+\lVert V \rVert_\infty)\rout_3^2),
\]
where $\theta_k = \beta_k (\rout_3)$.
We use Propositions~\ref{prop:interpolation1} and \ref{prop:interpolation2} and obtain
\begin{align*}
\lVert F \rVert_{H^1 (\tilde X_{\rout_3})} &\leq \tilde D_3 D_4 \lVert F \rVert_{H^1 (X_1)} 
\leq D_1^\gamma D_2 \tilde D_3 D_4
\lVert F \rVert_{H^1 (\tilde X_{\rout_3})}^{1 - \gamma}
\lVert (\partial_{d+1} F)_0 \rVert_{L^2 (W_\delta(L))}^{\gamma / 2}
\lVert F \rVert_{L^2 (U_3(L))}^{\gamma / 2} .
\end{align*}
Since $U_3 (L) \subset \tilde X_{\rout_3}$ we have
\[
\lVert F \rVert_{H^1 (\tilde X_{\rout_3})} \leq
 D_1^2 D_2^{2/\gamma} \tilde D_3^{2/\gamma} D_4^{2/\gamma}
\lVert (\partial_{d+1} F)_0 \rVert_{L^2 (W_\delta(L))} .
\]
By Ineq.~\eqref{eq:lower}, the square of the left hand side is bounded from below by
\[
\lVert F \rVert_{H^1 (\tilde X_{\rout_3})}^2 \geq
\lVert F \rVert_{H^1 (\Lambda_L \times [-\rout_3 , \rout_3 ] ) }^2 \geq
 \frac{\rout_3}{2}\sum_{\genfrac{}{}{0pt}{2}{k \in \NN}{E_k \leq b}} \lvert \alpha_k \rvert^2.
\]
 Putting everything together we obtain by using $(\partial_{d+1} F)_0 = \phi$
 \begin{equation*}
 \frac{\rout_3}{2}\sum_{\genfrac{}{}{0pt}{2}{k \in \NN}{E_k \leq b}} \lvert \alpha_k \rvert^2  \leq
 D_1^4 \left( D_2 \tilde D_3 D_4 \right)^{4/\gamma} \lVert \phi \rVert_{L^2 (W_\delta (L))}^2 .
\end{equation*}
In order to end the proof we will give an upper bound on $\tilde D_3$ which is independent of $\alpha_k$, $k \in \NN$.
For this purpose, we we recall that $\theta_k = \beta_k (\rout_3)$. Since
$
\theta_k \leq \euler^{2 \rout_3 \sqrt{b}}
$
for all $k \in \NN$ with $E_k \leq b$, we have
\[
 \tilde D_3^4 \leq D_3^4 :=
   \euler^{4 \rout_3 \sqrt{b} }.
\]
Hence, using $\sum_{E_k \leq b} \lvert \alpha_k \rvert^2 = \lVert \phi \rVert_{L^2(\Lambda_L)}^2$, we obtain for all $L \in \NN_{\mathrm{odd}}$ the estimate
\begin{equation*} \label{ucp1}
  \tilde C_{\sfuc} \lVert \phi \rVert_{L^2 (\Lambda_L)}^2 \leq  \lVert \phi \rVert_{L^2 (W_\delta(L))}^2
\end{equation*}
where $\tilde C_{\sfuc} = \tilde C_{\sfuc} (d,\delta , b , \lVert V \rVert_\infty) =  D_1^{-4} \left( D_2 D_3 D_4 \right)^{-4/\gamma}$. From the definitions of $D_i$, $i \in \{1,2,3,4\}$, and $\gamma$ one calculates that
\[
\tilde C_{\sfuc} \geq \delta^{\tilde N \bigl(1 + \lVert V \rVert_\infty^{2/3} + \sqrt{b} \bigr)}
\]
with some constant $\tilde N = \tilde N (d)$\ifthenelse{\boolean{journal}}{}{, see Appendix~\ref{app:constants}}.
Now we treat the case of $L \in \NN_{\mathrm{even}} = \{2,4,6,\ldots\}$. By a scaling argument as in Corollary~2.2 of \cite{RojasMolinaV-13}, we immediately obtain that for all $G > 0$, $\delta \in (0,G/2)$, $L/G \in \NN_{\mathrm{odd}}$ and all $(G,\delta)$-equidistributed sequences $q_j$
we have
\begin{equation} \label{eq:prefinal}
 \lVert \phi \rVert_{L^2 (W_\delta^q (L))}^2
\geq \tilde C_{\sfuc}^{G} \lVert \phi \rVert_{L^2 (\Lambda_L)}^2
\end{equation}
and $\tilde C_{\sfuc}^{G} (d, \delta , b ,\lVert V \rVert_\infty) =  \tilde C_{\sfuc} (d, \delta / G , b G^2 , \lVert V \rVert_\infty G^2)$. Here $W_\delta^q (L)$ denotes the set $W_\delta (L)$ corresponding to the sequence $q_j$.
Now we define
\[
G = \begin{cases}
\frac{L}{L / 2 - 1} & \text{if}\ L \in 4 \NN, \\
2 & \text{otherwise}
\end{cases}
\]
which satisfies  $G \in [2,4]$ and $L/G \in \NN_{\mathrm{odd}}$.
Since $G \geq 2$, every elementary cell $\Lambda_G + j$, $j \in (G\ZZ)^d$ contains at least one elementary cell $\Lambda_1 + j$, $j \in \ZZ^d$. Hence we can choose a $(G,\delta)$-equidistributed subsequence $q_j$ of $\x_j$. We apply Ineq.~\eqref{eq:prefinal} to this subsequence and obtain
\begin{equation*}
 \lVert \phi \rVert_{L^2 (W_\delta(L))}^2
\geq
 \lVert \phi \rVert_{L^2 (W^q_\delta(L))}^2
 \geq \tilde C_\sfuc^{G} \lVert \phi \rVert_{L^2 (\Lambda_L)} .
\end{equation*}
Note that $W_\delta (L)$ corresponds to the sequence $\x_j$.
Putting everything together we obtain the statement of the theorem with
\[
\min \Bigl\{ \tilde C_\sfuc , \inf_{G \in [2,4]} \tilde C^{G}_\sfuc \Bigr\}
        \geq \delta^{N \bigl(1 + \lVert V \rVert_\infty^{2/3} + \sqrt{b} \bigr)} =: C_\sfuc
\]
and some constant $N = N (d)$. For the last inequality we use that $(1/4)^{\tilde N} \geq \delta^{2 \tilde N}$.
\end{proof}

\begin{proof}[Proof of Corollary~\ref{cor:eigenvalue}]
We denote the normalized eigenfunctions of $-\Delta_L + A_L + B_L$ corresponding to the eigenvalues $\lambda_i(-\Delta_L + A_L + B_L)$ by $\phi_i$. Then we have
\begin{align*}
  \lambda_i( - \Delta_L + A_L + B_L)
    & = \left\langle \phi_i, ( - \Delta_L + A_L + B_L ) \phi_i \right\rangle\\
    & = \max_{\phi \in \mathrm{Span}\{\phi_1, \ldots, \phi_i\}, \lVert \phi \rVert = 1}  \left\langle \phi, (- \Delta_L + A_L) \phi \right\rangle + \left\langle \phi ,  B_L \phi \right\rangle\\
    & \geq \max_{\phi \in \mathrm{Span}\{\phi_1, \ldots, \phi_i\}, \lVert \phi \rVert = 1} \left\langle \phi, (- \Delta_L + A_L) \phi \right\rangle + \alpha \left\langle \phi , \chi_{W_{\delta}(L)} \phi \right\rangle.
 \end{align*}
By Corollary~\ref{cor:result1}, we conclude that for all $\phi \in \mathrm{Span}\{\phi_1, \ldots, \phi_i\}$, $\lVert \phi \rVert = 1$, we have
  \[
    \left\langle \phi , \chi_{W_{\delta}(L)} \phi \right\rangle \geq C_\sfuc^{G,1}(d,\delta,b,\lVert A_L + B_L \rVert_\infty )
  \]
and furthermore, by the variational characterization of eigenvalues, we find
  \begin{equation*}
     \max_{\genfrac{}{}{0pt}{1}{\phi \in \mathrm{Span}\{\phi_1, \ldots, \phi_i\}}{\lVert \phi \rVert = 1}} \left\langle \phi, (- \Delta_L +  A_L) \phi \right\rangle
     \geq \inf_{\mathrm{dim} \mathcal{D} = i} \max_{\genfrac{}{}{0pt}{1}{\phi \in \mathcal{D}}{\lVert \phi \rVert = 1}} \left\langle \phi, (- \Delta_L + A_L) \phi \right\rangle
     = \lambda_i( - \Delta_L + A_L) .
  \end{equation*}
Thus, we obtain the statement of the corollary.
\end{proof}
%
%
\section{Proof of Wegner and initial scale estimate}\label{s:proof-Wegner}

Recall that $0 < G_1 < G_2$ are the numbers from the Delone property such that $\lvert \{ \mathcal{D} \cap (\Lambda_{G_1} + x) \} \rvert \leq 1$, $\lvert \{ \mathcal{D} \cap (\Lambda_{G_2} + x) \} \rvert \geq 1$ for any $x \in \RR^d$, and that for all $t \in [0,1]$ we have $\supp u_t \subset \Lambda_{G_u}$.
Let $\delta_{\max} := 1 - \omega_{+}$ and $K_u := u_{\max}  \lceil G_u/G_1 \rceil^d$.
For $\omega \in [\omega_{-}, \omega_{+}]^{\mathcal{D}}$ and $\delta \leq \delta_{\max}$, we use the notation $V_{\omega + \delta}$ for the potential $V_\omega$, where every $\omega_j$, $j \in \mathcal{D}$ has been replaced by $\omega_j + \delta$.
The following lemma is a consequence of the properties of a Delone set, in particular $\lvert \Lambda_L \cap \mathcal{D} \rvert \leq \lceil L/G_1 \rceil^d$, and our assumption \eqref{eq:condition_u}.
\begin{lemma}
\label{lem:wegner}
 \begin{enumerate}[(i)]
  \item
  For all $\omega \in [\omega_{-}, \omega_{+}]^{\mathcal{D}}$, all $0 < \delta \leq \delta_{\max}$ and all $L \in (G_2 + G_u) \NN$, the difference $V_{\omega + \delta} - V_\omega$ is on $\Lambda_L$ bounded from below by $\au \delta^{\ao}$ times the characteristic function of $W_{\bu \delta^{\bo}}(L)$ which corresponds to a $(G_2 + G_u, \bu \delta^{\bo})$-equidistributed sequence.
  \item
  For all $\omega \in [0,1]^{\mathcal{D}}$ we have $\lVert V_\omega \rVert_\infty \leq K_u$.
  \item
  For all $L \in (G_2 + G_u) \NN$, we have $\lvert \{ j \in \mathcal{D} : \exists t \in [0,1]: \supp u_t( \cdot - j) \cap \Lambda_L \neq \emptyset \} \rvert \leq  \lceil (L + G_u) / G_1 \rceil^d \leq (2 L /G_1)^d$.
 \end{enumerate}
\end{lemma}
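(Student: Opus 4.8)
I would deduce all three parts from the two defining features of the Delone set $\mathcal D$ — uniform discreteness (equivalently $\lvert\mathcal D\cap\Lambda_L\rvert\le\lceil L/G_1\rceil^d$) and relative density ($\lvert\mathcal D\cap(\Lambda_{G_2}+x)\rvert\ge 1$) — together with the single-site conditions \eqref{eq:condition_u}. Parts (ii) and (iii) are elementary counting arguments, and I would do (i) last, reusing the geometry.

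\emph{Parts (ii) and (iii).} For fixed $x\in\RR^d$ a summand $u_{\omega_j}(x-j)$ of $V_\omega(x)=\sum_{j\in\mathcal D}u_{\omega_j}(x-j)$ vanishes unless $x-j\in\supp u_{\omega_j}\subset\Lambda_{G_u}$, i.e.\ unless $j\in\mathcal D\cap(\Lambda_{G_u}+x)$; by uniform discreteness there are at most $\lceil G_u/G_1\rceil^d$ such $j$, and each term is bounded by $\lVert u_{\omega_j}\rVert_\infty\le u_{\max}$, which is (ii). For (iii), $\supp u_t(\cdot-j)$ meets $\Lambda_L$ only if $j\in\Lambda_{L+G_u}$, so the number of relevant sites is at most $\lceil(L+G_u)/G_1\rceil^d$; and $L\in(G_2+G_u)\NN$ forces $L\ge G_2+G_u>G_1+G_u$, whence $\lceil(L+G_u)/G_1\rceil<(L+G_u+G_1)/G_1<2L/G_1$.

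\emph{Part (i).} I would write $V_{\omega+\delta}-V_\omega=\sum_{j\in\mathcal D}\bigl(u_{\omega_j+\delta}-u_{\omega_j}\bigr)(\,\cdot\,-j)$ and, for each $k\in\bigl((G_2+G_u)\ZZ\bigr)^d$, use relative density to pick a site $j_k\in\mathcal D\cap(\Lambda_{G_2}+k)$. Since $j_k\in\Lambda_{G_2}+k$, the concentric box $\Lambda_{G_u}+j_k$ lies inside $\Lambda_{G_2+G_u}+k$, and for $k\neq k'$ these boxes are disjoint (some coordinate of $k-k'$ has modulus $\ge G_2+G_u$, so the corresponding coordinates of $j_k,j_{k'}$ differ by more than $G_u$). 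Applying the third line of \eqref{eq:condition_u} with $t=\omega_{j_k}\in[\omega_{-},\omega_{+}]$ and $\delta\le\delta_{\max}=1-\omega_{+}$ yields $x_0^{(k)}\in\Lambda_{G_u}$ with $u_{\omega_{j_k}+\delta}-u_{\omega_{j_k}}\ge\au\,\delta^{\ao}\,\chi_{B(x_0^{(k)},\,\bu\delta^{\bo})}$. Setting $z_k:=j_k+x_0^{(k)}$ we get $B(z_k,\bu\delta^{\bo})\subset\Lambda_{G_u}+j_k\subset\Lambda_{G_2+G_u}+k$, so $(z_k)_k$ is a $(G_2+G_u,\bu\delta^{\bo})$-equidistributed sequence and $W_{\bu\delta^{\bo}}(L)=\bigcup_k B(z_k,\bu\delta^{\bo})\cap\Lambda_L$. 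On $\Lambda_L$ one keeps only the summands indexed by the chosen $j_k$, each bounded below by $\au\delta^{\ao}$ on its own pairwise disjoint ball $B(z_k,\bu\delta^{\bo})$, and discards the rest, obtaining $V_{\omega+\delta}-V_\omega\ge\au\delta^{\ao}\,\chi_{W_{\bu\delta^{\bo}}(L)}$.

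The step needing care — and the one I expect to be the main obstacle — is this last one: the general framework allows the single-site supports $\Lambda_{G_u}+j$, over \emph{all} of $\mathcal D$, to overlap (with multiplicity up to $\lceil G_u/G_1\rceil^d$, as quantified in (ii)), so one must argue that the contributions of the non-selected sites do not spoil the lower bound. Choosing $j_k$ from the concentric sub-boxes $\Lambda_{G_2}+k$ is what keeps the \emph{selected} boxes disjoint; controlling the remainder uses the non-negativity/monotonicity of the admissible single-site families $\{u_t\}$. The remaining bookkeeping — that $\lVert u_t\rVert_\infty\le u_{\max}$, $\supp u_t\subset\Lambda_{G_u}$, and $B(x_0^{(k)},\bu\delta^{\bo})\subset\Lambda_{G_u}$ (forced by \eqref{eq:condition_u}, since outside $\Lambda_{G_u}$ the difference vanishes) pass through the argument — is routine.
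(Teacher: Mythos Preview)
Your argument is correct and is precisely the detailed verification the paper omits: the paper merely states that the lemma ``is a consequence of the properties of a Delone set, in particular $\lvert \Lambda_L \cap \mathcal{D} \rvert \leq \lceil L/G_1 \rceil^d$, and our assumption \eqref{eq:condition_u}'', and your proof unpacks exactly this. One remark: the ``main obstacle'' you flag is in fact no obstacle at all, because the third line of \eqref{eq:condition_u} already asserts the \emph{pointwise} inequality $u_{t+\delta}-u_t\ge \au\delta^{\ao}\chi_{B(x_0,\bu\delta^{\bo})}\ge 0$ on all of $\RR^d$, so every discarded summand $(u_{\omega_j+\delta}-u_{\omega_j})(\cdot-j)$ is globally non-negative and may be dropped without further hypotheses on the family $\{u_t\}$; no separate monotonicity assumption is needed, nor is the disjointness of the selected boxes $\Lambda_{G_u}+j_k$ required for the inequality (only for the equidistribution property $B(z_k,\bu\delta^{\bo})\subset\Lambda_{G_2+G_u}+k$, which you establish correctly).
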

\begin{proof}[Proof of Theorem~\ref{thm:wegner}]
Note that for all $b \in \RR$, $\lambda_i(H_{\omega, L}) \leq b$ implies, by Lemma~\ref{lem:wegner} part (ii), that $\lambda_i(H_{\omega + \delta,L}) \leq b + \lVert V_{\omega + \delta} - V_\omega \rVert \leq b + 2 K_u$.
Now we apply Corollary~\ref{cor:eigenvalue} with $A_L = V_\omega$ and $B_L = V_{\omega + \delta} - V_\omega$ (both restricted to $\Lambda_L$).
Together with Lemma \ref{lem:wegner} part (i), we obtain for all $b \in \RR$, all $L \in G_u \NN$, all $\omega \in [\omega_{-}, \omega_{+}]^{\mathcal{D}}$, all $\delta \leq \delta_{\max}$ and all $i \in \NN$ with $\lambda_i(H_{\omega, L}) \leq b$ the inequality
\[
   \lambda_i(H_{\omega + \delta, L})
   \geq
   \lambda_i(H_{\omega, L}) + \au \delta^{\ao} C_\sfuc^{G_2 + G_u,1} ( d, \bu \delta^{\bo}, b + 2 K_u, K_u) .
\]
In particular, there is $\kappa = \kappa(d, \omega_+ , \au , \ao , \bu , \bo , G_2 , G_u , K_u , b) > 0$ such that
\begin{equation} \label{eq:eigenvalue_lifting_kappa}
 \lambda_i(H_{\omega + \delta, L})
   \geq
   \lambda_i(H_{\omega, L}) + \delta^\kappa .
\end{equation}
Now let $\epsilon > 0$, satisfying $\epsilon \leq \epsilon_{\max} := \delta_{\max}^\kappa/4$.
We choose $\delta := (4 \epsilon)^{1/\kappa}$, whence
\begin{equation}\label{eq:eigmove}
 \lambda_i(H_{\omega + \delta,L}) \geq \lambda_i(H_{\omega,L}) + 4 \epsilon .
\end{equation}
Let $\rho \in C^\infty(\mathbb{R},[-1,0])$ be smooth, non-decreasing such that $\rho = -1$ on $(-\infty; -\epsilon]$ and $\rho = 0$ on $[\epsilon; \infty)$. We can assume $\lVert \rho' \rVert_\infty \leq 1/\epsilon$. It holds that
\begin{equation*}
\chi_{[E-\epsilon; E + \epsilon]} (x) \leq \rho(x-E + 2\epsilon) - \rho(x-E-2\epsilon)
= \rho(x-E - 2\epsilon + 4 \epsilon) - \rho(x-E-2\epsilon)
\end{equation*}
for all $x \in \mathbb{R}$ and together with \eqref{eq:eigmove} this implies
\begin{align}
 \mathbb{E} \left[ \mathrm{Tr} \left[ \chi_{[E-\epsilon; E+\epsilon]} ( H_{\omega,L}) \right] \right]
 &\leq
 \mathbb{E} \left[ \mathrm{Tr} \left[ \rho(H_{\omega,L} - E - 2 \epsilon + 4 \epsilon) - \rho(H_{\omega,L} - E - 2\epsilon) \right] \right] \nonumber \\
 & \leq \mathbb{E} \left[ \mathrm{Tr} \left[ \rho \left( H_{\omega + \delta,L} - E - 2 \epsilon \right) - \rho \left( H_{\omega,L} - E - 2 \epsilon \right) \right] \right]. \label{trace rho}
 \end{align}
Now let $\tilde \Lambda_L := \{ j \in \mathcal{D} : \exists t \in [0,1]: \supp u_t( \cdot - j) \cap \Lambda_L \neq \emptyset \}$ be the set of lattice sites which can influence the potential within $\Lambda_L$.
Note that $\lvert \tilde \Lambda_L \rvert \leq ( 2 L / G_1 )^d$.
We enumerate the points in $\tilde \Lambda_L$ by
$k : \{ 1, \ldots \lvert \tilde \Lambda_L \rvert \} \rightarrow \mathcal{D}$, $n \mapsto k(n)$.
The upper bound in \eqref{trace rho} will be expanded in a telescopic sum by changing the $\lvert \tilde \Lambda_L \rvert$ indices from $\omega_j$ to $\omega_j + \delta$ successively.
In order to do that some notation is needed. Given
$\omega \in [\omega_{-}, \omega_{+}]^{\mathcal{D}}$,
$n \in \{ 1, \ldots, \lvert \tilde \Lambda_L \rvert \}$,
$\delta \in [0, \delta_{\max}]$ and
$t \in [\omega_{-},  \omega_{+}]$,
we define $\tilde{\omega}^{(n, \delta)}(t) \in [\omega_{-}, 1]^{\mathcal{D}}$ inductively via
\begin{align*}
\left( \tilde{\omega}^{(1, \delta)}(t) \right)_j &:=
	\begin{cases}
		 t & \mbox{ if } j = k(1),\\
		\omega_j & \mbox{else},
	\end{cases} \quad\text{and}\quad
\left( \tilde{\omega}^{(n, \delta)}(t) \right)_j :=
	\begin{cases}
		 t & \mbox{ if } j = k(n),\\
		\left( \tilde{\omega}^{(n-1, \delta)}(\omega_j + \delta) \right)_j & \mbox{else}.
	\end{cases}
\end{align*}
The function
$\tilde{\omega}^{(n,\delta)} : [\omega_{-},  1] \rightarrow [\omega_{-},  1]^{\mathcal{D}}$
is the rank-one perturbation of $\omega$ in the $k(n)$-th coordinate with the additional requirement that all sites $k(1), \ldots, k(n-1)$ have already been blown up by $\delta$.
We define
\begin{align*}
\Theta_n(t) &:= \mathrm{Tr} \left[ \rho \left( H_{{\tilde{\omega}^{(n, \delta)}(t)},L} - E - 2 \epsilon \right) \right],
\mbox{ for } n = 1, \ldots, \lvert \tilde \Lambda_L \rvert.
\end{align*}
Note that
\begin{align*}
\Theta_1(\omega_{k(1)}) & = \mathrm{Tr} \left[ \rho \left( H_{\omega, L} - E - 2 \epsilon \right) \right],\\
\Theta_n(\omega_{k(n)}) &= \Theta_{n-1}(\omega_{k(n-1)} + \delta)\quad \mbox{for}\ n = 2, \ldots, \lvert \tilde \Lambda_L \rvert \quad \text{and}\\
\Theta_{\lvert \tilde \Lambda_L \rvert}(\omega_{k(\lvert \tilde \Lambda_L \rvert)} + \delta) & = \mathrm{Tr} \left[ \rho \left( H_{\omega + \delta, L} - E - 2 \epsilon \right) \right].
\end{align*}
Hence the upper bound in \eqref{trace rho} is
\begin{multline*}
 \EE \left[ \mathrm{Tr} \left[ \rho(H_{\omega + \delta,L} - E - 2 \epsilon ) \right]- \mathrm{Tr} \left[ \rho(H_{\omega,L} - E - 2 \epsilon  \right]  \right] \\
= \EE \left[ \Theta_{\lvert \tilde \Lambda_L \rvert}(\omega_{k(\lvert \tilde \Lambda_L \rvert)} + \delta) - \Theta_1(\omega_{k(1)}) \right]
=  \sum_{n = 1}^{\lvert \tilde \Lambda_L \rvert} \EE \left[ \Theta_n(\omega_{k(n)} + \delta) - \Theta_n(\omega_{k(n)}) \right].
\end{multline*}
Due to the product structure of the probability space, we can apply Fubini's Theorem to each summand and obtain
\begin{align*}
\EE \left[ \Theta_n(\omega_{k(n)} + \delta) - \Theta_n(\omega_{k(n)}) \right]
= \EE \left[ \int_{\omega_{-}}^{\omega_{+}} \Theta_n(\omega_{k(n)} + \delta) -\Theta_n(\omega_{k(n)})  \mathrm{d}\mu(\omega_{k(n)})\right].
\end{align*}
Note that $\Theta_n:\ [\omega_{-}, 1] \to \RR$ is monotone and bounded.
We will use the following Lemma.
	\begin{lemma}
	\label{lemma:Wegner1}
	Let $- \infty < \omega_{-} < \omega_{+} \leq + \infty$.
	Assume that $\mu$ is a probability distribution with bounded density $\nu_\mu$ and support in the interval $[\omega_{-},\omega_{+}]$ and let $\Theta$ be a non-decreasing, bounded function.
	Then for all $\delta > 0$
	\begin{equation*}
	\int_\RR \left[ \Theta(\lambda + \delta) - \Theta(\lambda) \right] \mathrm{d} \mu(\lambda)  \leq \lVert \nu_\mu \rVert_\infty \cdot \delta \left[ \Theta(\omega_{+} + \delta) - \Theta(\omega_{-}) \right].
	\end{equation*}
	\end{lemma}
		\begin{proof}[Proof of Lemma \ref{lemma:Wegner1}]
		We calculate
		\begin{align*}
		& \int_\RR \left[ \Theta(\lambda + \delta) - \Theta (\lambda) \right] \mathrm{d} \mu ( \lambda )\\
		 \leq & \lVert \nu_\mu \rVert_\infty \int_{\omega_{-}}^{\omega_{+}} \left[ \Theta(\lambda + \delta) - \Theta (\lambda) \right] \mathrm{d} \lambda
		=   \lVert \nu_\mu \rVert_\infty \left[ \int_{\omega_{-} + \delta}^{\omega_{+}+\delta} \Theta(\lambda) \mathrm{d} \lambda - \int_{\omega_{-}}^{\omega_{+}} \Theta(\lambda) \mathrm{d} \lambda \right]\\
		=&  \lVert \nu_\mu \rVert_\infty \left[ \int_{\omega_{+}}^{\omega_{+}+\delta} \Theta(\lambda) \mathrm{d} \lambda - \int_{\omega_{-}}^{\omega_{-}+\delta} \Theta(\lambda) \mathrm{d} \lambda \right]
		\leq   \lVert \nu_\mu \rVert_\infty \cdot \delta \left[ \Theta(\omega_{+}+\delta) - \Theta(\omega_{-}) \right]. \qedhere
		\end{align*}
		\end{proof}
Thus, we find for all $n = 1,\ldots, \lvert \tilde \Lambda_L \rvert$
\begin{equation*}
\int_{\omega_{-}}^{\omega_{+}} \left[ \Theta_n(\omega_{k(n)} +  \delta) - \Theta_n(\omega_{k(n)})  \mathrm{d}\mu(\omega_{k(n)})\right] \leq \lVert \nu_\mu \rVert_{\infty} \cdot \delta \left[ \Theta_n(\omega_{+}+  \delta) - \Theta_n(\omega_{-}) \right] .
\end{equation*}
We will also need the following result, see, e.g., Theorem~2 in \cite{HundertmarkKNSV-06}.
\begin{proposition}\label{Krein}
	Let $H_0 := -\Delta + A$ be a Schr\"o\-ding\-er operator with a bounded potential $A\geq 0$, and
	let $H_1 := H_0 + B$ for some bounded $B \geq 0$ with compact support.	
	Denote the corresponding Dirichlet restrictions to $\Lambda$ by $H_0^\Lambda$ and $H_1^\Lambda$, respectively.
	There are constants $K_1$, $K_2$ depending only on $d$ and monotonously on $\mathrm{diam}\ \supp B$
	such that for any smooth, bounded function $g:~\RR \rightarrow \RR$ with compact support in $(-\infty,b]$
	and the property that $g(H_1^\Lambda) - g(H_0^\Lambda)$ is trace class we have
	\begin{equation*}
	\mathrm{Tr} \left[ g(H_1^\Lambda) - g(H_0^\Lambda) \right]
\leq K_1 \euler^{b} + K_2 \left( \ln(1+\lVert g^\prime \rVert_\infty )^d \right) \lVert g^\prime \rVert_1 .
	\end{equation*}
	\end{proposition}
Proposition~\ref{Krein} implies
	\begin{lemma}\label{lemma:SSF}
	Let $0 < \epsilon \leq \epsilon_{\max}$.
	Then $\Theta_n(\omega_{+} + \delta) - \Theta_n(\omega_{-}) \leq ( K_1 \euler^{b} + 2^d K_2 ) \lvert \ln \epsilon \rvert^d$, where $K_1, K_2$ are as in Proposition~\ref{Krein} and thus only depend on $d$ and on $G_u$.
	\end{lemma}
		\begin{proof}[Proof of Lemma \ref{lemma:SSF}]
		Let $g(\cdot) := \rho(\cdot - E - 2\epsilon))$.
		By our choice of $\rho$, $g$ has support in $(- \infty, b]$, $\lVert g^\prime\rVert_\infty \leq 1/\epsilon$ and $\lVert g^\prime \rVert_1 = 1$.
		We define the operators
		\begin{align*}
		H_0^\Lambda  := H \left( {\tilde{\omega}^{(n,\delta)}(\omega_{-})} ,L \right) \quad \text{and} \quad
		H_1^\Lambda  := H \left( {\tilde{\omega}^{(n,\delta)}(\omega_{+} + \delta)} ,L \right).
		\end{align*}
		They are lower semibounded operators with purely discrete spectrum and since $g$ has support in $(- \infty, b]$, the difference $g(H_1^\Lambda)-g(H_0^\Lambda)$ is trace class.
		By the previous proposition
		\begin{equation*}
		\Theta_n(\omega_{+} + \delta) - \Theta_n(\omega_{-}) = \mathrm{Tr} \left[ g(H_1^\Lambda) - g(H_0^\Lambda) \right] \leq K_1 \euler^b + K_2 \left( \ln(1 + 1/\epsilon) \right)^d.
		\end{equation*}
		To conclude, note that $\epsilon \leq \epsilon_{\max} < \frac{1}{2}$ and thus $\ln (1 + 1/\epsilon) \leq 2 \lvert \ln \epsilon \rvert$ and
		$1 \leq \lvert \ln \epsilon \rvert \leq \lvert \ln \epsilon \rvert^d$.
		\end{proof}
Putting everything together and recalling $\delta = \left(4 \epsilon \right)^{1/\kappa}$
we find
\begin{align*}
\EE \left[ \mathrm{Tr} \left[ \chi_{[E- \epsilon, E + \epsilon]}(H_{\omega,L}) \right] \right]
& \leq
\left( K_1 \euler^{b} + 2^d K_2  \right)
\lVert\nu_\mu\rVert_\infty
\cdot
\delta
\left\lvert\ln \epsilon \right\rvert^d \lvert \tilde \Lambda_L \rvert \\
& \leq
\left( K_1 \euler^{b} + 2^d K_2  \right)
\lVert\nu_\mu\rVert_\infty
\cdot
\left(4 \epsilon \right)^{1/\kappa}
\left\lvert\ln \epsilon \right\rvert^d (2 / G_1)^d L^d. \qedhere
\end{align*}
\end{proof}
\begin{proof}[Proof of Theorem~\ref{thm:initial}]
We follow the ideas developed in \cite{BarbarouxCH-97b,KirschSS-98a}.
Let $t \leq \delta_{\mathrm{max}}$, $V_{t,L}$ be the restriction of $V_\omega$ to $\Lambda_L$ obtained by setting all random variables to $t$, and $H_{t,L} = -\Delta_{\Lambda_L} + V_{L,t}$ on $L^2 (\Lambda_L)$ with Dirichlet boundary conditions. Note that $H_{0,L} = -\Delta_{\Lambda_L} + V_{0,L}$ and that the first eigenvalue of $H_{t,L}$ is bounded from above by $d(\pi / L)^2 + K_u$. Ineq.~\eqref{eq:eigenvalue_lifting_kappa} with $b = d\pi^2 + K_u$, $\omega_k = 0$, $k \in \mathcal{D}$, and $\delta = t$ yields that there is $\kappa = \kappa (d,\delta_{\mathrm{max}} , \au , \ao , \bu , \bo , G_2 , G_u , K_u)$ such that for all $t \leq \delta_{\mathrm{max}}$
\[
 \lambda_1 (H_{t,L}) \geq \lambda_1 (H_{0,L}) + t^{\kappa} .
\]
We choose $t = L^{-7 / (4\kappa)}$ and $L$ sufficiently large such that $t < \min\{\delta_{\mathrm{max}} , t_0\}$. Then,
\[
 \lambda_1 (H_{t,L}) - \lambda_1 (H_{0,L}) \geq L^{-7/4}.
\]
Let $\Omega_0 := \{\omega \in \Omega : \lambda_1 (H_{\omega,L}) \geq \lambda_1 (H_{t,L})\}$.
Since the potential values in $\Lambda_L$ only depend on $\omega_k$, $k \in \Lambda_{L+G_u} \cap \mathcal{D}$, we calculate using $\lvert \Lambda_{L + G_u} \cap \mathcal{D} \rvert \leq \lceil(L+G_u)/G_1\rceil^d$ and our assumption on the measure $\mu$ that
\begin{equation*}
\PP (\Omega_0)
\geq 1- \PP (\exists \gamma \in \Lambda_{L+G_u} \cap \mathcal{D} \colon 		
     \omega_\gamma \leq t)
\geq 1 - \left\lceil\frac{L+G_u}{G_1}\right\rceil^d \mu ([0,t])
\geq 1 - \left\lceil\frac{L+G_u}{G_1}\right\rceil^d \frac{C}{L^{7d/4}} .
\end{equation*}
Since $\lceil (L+G_u)/G_1 \rceil^d \leq L^{5d/4}$ for $L$ sufficiently large, we obtain the statement of the theorem.
\end{proof}

\section{Proof of observability estimate}\label{s:proof-observability}
 We want to apply \cite[Theorem 2.2]{Miller-10} where we choose $A = \Delta_L - V_L$ on $L^2(\Lambda_L)$ with Dirichlet boundary conditions, $C = \chi_{W_\delta(L)}$ and $C_0 = \mathrm{Id}$.
 Note that $A$ is self-adjoint with spectrum contained in $(- \infty, \lVert V \rVert_\infty]$.
 For $\lambda > 0$ we define the increasing sequence of spectral subspaces $\mathcal{E}_\lambda := \mathrm{Ran} \chi_{[- \lambda, \infty)}(\Delta_L - V_L)$.
 \par
 We need to check \cite[(5),(6),(7)]{Miller-10}.
 By spectral calculus, we have for all $\lambda > 0$
 \begin{equation*}
  \lVert \mathrm{e}^{(\Delta_L - V_L) t} u \rVert_{\Lambda_L} \leq \mathrm{e}^{-\lambda t} \lVert u \rVert_{\Lambda_L},
  \quad u \in \mathcal{E}_\lambda^\perp = \mathrm{Ran} \chi_{(- \infty, - \lambda)}(\Delta_L - V_L),
  \quad t > 0.
 \end{equation*}
 Furthermore, Corollary \ref{cor:result1} implies for all $\lambda > 0$ and $u \in \mathcal{E}_{\lambda}$
\begin{equation*}
	\label{eq:LM6}
	\lVert u\rVert_{\Lambda_L}^2 \leq a_0 \mathrm{e}^{- N \ln (\delta/G) G \sqrt{\lambda}} \lVert u \rVert_{W_\delta(L)}^2.
\end{equation*}
For $T \leq 1$ we have
$\euler^{2 T \lVert V \rVert_\infty} / T \leq \euler^{2 \lVert V \rVert_\infty} \euler^{2/T}$
whence
\begin{equation*}
  \lVert \euler^{T(\Delta - V)} u \rVert_{\Lambda_L}^2
  \leq
  \frac{\mathrm{e}^{2 T \lVert V \rVert_\infty}}{T} \int_0^T \lVert \mathrm{e}^{t(\Delta - V)} u \rVert_{\Lambda_L}^2 \mathrm{d}t
  \leq
  \euler^{2 \lVert V \rVert_\infty} \mathrm{e}^{2/T} \int_0^T \lVert \mathrm{e}^{t(\Delta - V)} u \rVert_{\Lambda_L}^2 \mathrm{d}t .
 \end{equation*}
 Thus we found \cite[(5),(6),(7)]{Miller-10} with $m_0 = 1$, $m = 0$, $\alpha = \nu = 1/2$, $a_0$ and $b_0$ as in the theorem, $a = - (N/2) \ln (\delta / G) G > 0$, $b = 1$ and $\beta = 1$.
 By \cite[Theorem 2.2 and Corollary 1 (i)]{Miller-10}, there exists $T' > 0$ such that for all $T \leq T'$
 \[
  \kappa_T \leq  4 a_0 b_0 \euler^{2 c_\ast / T},\
\text{where}\
 c_\ast = 4 \left( \sqrt{ a +2} - \sqrt{a} \right)^{-4} .
\]
From the proof in \cite{Miller-10}, it can be inferred that $T'$ only depends on $m_0$, $\alpha$, $\beta$, $a$, $b$, $a_0$, $b_0$ and on our choice $T \leq 1$. Thus, in our case, $T'$ only depends on $G$, $\delta$ and $\lVert V \rVert_\infty$.
Using $\sqrt{a + 2} - \sqrt{a} = \int_{a}^{a+2} (2 \sqrt{x})^{-1} \mathrm{d}x \geq (a+2)^{-1/2}$ and the fact that from $\delta \leq G/2$, it follows that $2 \leq 2a/a_{\min}$ where $a_{\min} := (N/2) \ln (2) G$, and we obtain
\[
  c_{\ast} \leq 4 ( a + 2 )^2
  \leq 4 a^2 ( 1 + 2/a_{\min})^2
  =
  \ln (G/\delta)^2  \left( NG + 4 / \ln 2 \right)^2. \qedhere
\]
%
%
\appendix
\section{Sketch of proof of Proposition~\ref{prop:carleman2}} \label{app:carleman}
We follow \cite{BourgainK-05,NakicRT-15} and consider the case $\rho = 1$ and $u \in C_{\mathrm{c}}^\infty (B (1) \setminus \{0\} ; \RR)$ only. The general case follows by  regularization ($u \in W^{2,2} (\RR^d)$ with support in $B (1) \setminus \{0\}$), scaling (to $\rho > 0$), and adding the two Carleman estimates for the real and imaginary parts of $u$.
Let $\sigma: \RR^d \to \RR$ be given by $\sigma (x) = \lvert x \rvert$, $\phi (s) = \mathrm{e}^s$, $g = w^{-\alpha} u $,
$w(x)=\psi(\lvert x \rvert)$,
\[
\psi(s) = s \, \exp\left[-\int_0^s\frac{1-\mathrm{e}^{-t}}t\mathrm{d}t\right],
 \quad
 \tilde \nabla g
 = \nabla g - \frac{\nabla \sigma^\T \nabla g}{\lvert \nabla \sigma \rvert^2} \nabla \sigma
 = \nabla g - \frac{\nabla w^\T \nabla g}{\lvert \nabla w \rvert^2} \nabla w,
\]
$F_w := (w\Delta w - \lvert \nabla w \rvert^2) / \lvert \nabla w \rvert^2: \RR^d \to \RR$
and $A (g):= (w \nabla w^{\mathrm T} \nabla g) / \lvert \nabla w \rvert^2 + (1/2) g F_w : \RR^d \to \RR$.
We follow the proof of \cite[Lemma~3.15]{BourgainK-05} until the estimate (8.2) in \cite{BourgainK-05}, i.e.
\begin{equation}\label{eq:intermediate1}
 4 \alpha^2 \int \frac{\lvert \nabla w \rvert^2}{w^2} A(g)^2 + 2 \alpha \int \sigma \phi' (\sigma) \lvert \tilde\nabla g \rvert^2 + 2 \alpha^3 \int \sigma \phi' (\sigma) \frac{\lvert \nabla w \rvert^2}{w^2} g^2
 \leq \int \frac{w^2}{\lvert \nabla w \rvert^2} \bigl( w^{-\alpha} \Delta u \bigr)^2 +  R_1  ,
\end{equation}
where
\[
 R_1 = C \left( \alpha \int w^{1-\alpha} \lvert g \rvert \lvert \Delta u \rvert + \alpha \int w^{-1} g^2 + \alpha \int w \frac{\lvert \nabla \sigma^\T \nabla g \rvert^2}{\lvert \nabla \sigma \rvert^2} + \alpha^2 \int w^{-1} \lvert A(g) \rvert \lvert g \rvert \right) .
\]
As explained in \cite{BourgainK-05},
one can drop the positive term  $\int \sigma \phi'(\sigma) \lvert \tilde\nabla g \rvert^2$ in \eqref{eq:intermediate1},
and obtain for sufficiently large $\alpha$ the Carleman estimate
 \begin{equation} \label{eq:carleman1}
 \alpha^3 \int_{\RR^d}  w^{-1-2\alpha} u^2 \leq \tilde C_2 \int_{\RR^d} w^{2-2\alpha} \left( \Delta u \right)^2 .
\end{equation}
Following now \cite{NakicRT-15} we do not drop the term $\int \sigma \phi'(\sigma) \lvert \tilde\nabla g \rvert^2$ and use instead
\begin{equation} \label{eq:gradient}
\lvert \tilde \nabla g \rvert^2
 = w^{-2\alpha} \lvert \nabla u \rvert^2
 - 2 \alpha w^{-2}    g \lvert \nabla w\rvert^2  A (g) 
 +  \alpha w^{-2}    g^2 F_w \lvert \nabla w \rvert^2
 - \alpha^2 w^{-2} g^2 \lvert \nabla w \rvert^2
 - \frac{(\nabla w^\T \nabla g)^2}{\lvert \nabla w \rvert^2}  .
\end{equation}
Combining Eq.~\eqref{eq:gradient} with Ineq.~\eqref{eq:intermediate1},
and using the bounds $F_w \geq -C_F=\inf_{B_1^\circ} F_w$ and $w \leq \sigma \phi'(\sigma)$, we obtain
\begin{multline} \label{eq:intermediate2}
4\alpha^2\int\frac{\lvert \nabla w \rvert^2}{w^2}A (g)^2
+ 2 \alpha\int w^{-2\alpha + 1} \lvert \nabla u \rvert^2
- 2 C_F \alpha^2 \int\sigma \phi' (\sigma) \frac{\lvert \nabla w \rvert^2}{w^2} g^2
\\
\leq
\int \frac{w^2}{\lvert \nabla w\rvert^2}(w^{-\alpha} \Delta u)^2 + R_2,
\end{multline}
with some appropriate rest term $R_2$. If we compare Ineqs.~\eqref{eq:intermediate1} and \eqref{eq:intermediate2}, we observe that the required gradient term is now included, while the $g^2$-term, which corresponds to the lower bound of Ineq.~\eqref{eq:carleman1}, is now negative and goes with $\alpha^2$ instead of $\alpha^3$! In a similar way as Ineq.~\eqref{eq:intermediate1} implies Ineq.~\eqref{eq:carleman1}, one calculates that Ineq.~\eqref{eq:intermediate2} implies for sufficiently large $\alpha$
\begin{equation} \label{eq:carleman2}
  \alpha \int_{\RR^d} w^{1-2\alpha} \lvert \nabla u \rvert^2-\alpha^2 \int_{\RR^d}  w^{-1-2\alpha} u^2   \leq \hat C_2 \int_{\RR^d} w^{2-2\alpha} \left( \Delta u \right)^2 \drm x .
\end{equation}
By adding the two estimates \eqref{eq:carleman1} and \eqref{eq:carleman2} we obtain the desired estimate by choosing $\alpha$ sufficiently large.
\ifthenelse{\boolean{journal}}{}{
\section{Constants} \label{app:constants} 
 \subsection{Cutoff functions}
 Let $f,\psi : \RR \to [0,1]$ be given by
\[
 f (x) = \begin{cases}
          \mathrm{e}^{-1/x} & x > 0 , \\
          0 & x \leq 0 ,
         \end{cases}
\quad \text{and} \quad
\psi (x) = \frac{f(x)}{f(x)  + f(1-x)} .
\]
Note that the function $\psi$ is $C^\infty (\RR)$ and satisfies
\[
 \sup_{x \in \RR} \psi' (x) \leq 2 =:C', \quad
 \sup_{x \in \RR} \psi'' (x) \leq 10 =: C'' , \quad \text{and} \quad
 \psi(x) = \begin{cases}
            0 & \text{if $x \leq 0$}, \\
            1 & \text{if $x \geq 1$} .
           \end{cases}
\]
For $\epsilon > 0$ we define $\psi_\epsilon : \RR \to [0,1]$ by
\[
 \psi_\epsilon (x) = \psi (x / \epsilon) .
\]
Let now $M \subset \RR^{d+1}$ and $h_M : \RR^{d+1} \to \RR$ with $h_M (x) \geq \operatorname{dist} (x,M)$ if $x \not \in M$ and $h_M (x) \leq 0$ if $x \in M$. For $\epsilon > 0$ we define $\chi : \RR^{d+1} \to [0,1]$ by
\[
 \chi_{M,\epsilon} (x) = \psi_\epsilon \bigl(\epsilon - h_M (x) \bigr) .
\]
Of course, $h_M(x) := \mathrm{dist}(x,M)$ is a possible choice, but in applications we will require $h_M$ to have certain additional properties.
By construction we have (cf.\ Fig.~\ref{fig:cutoff})
\[
 \chi_{M,\epsilon} (x) = \begin{cases}
             1 & \text{if $x \in M$} , \\
             0 & \text{if $\operatorname{dist} (x,M) \geq \epsilon$} .
            \end{cases}
\]
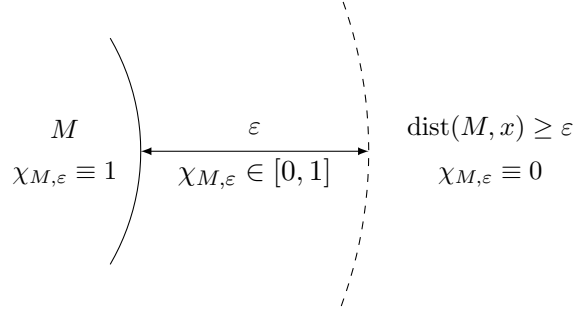
\begin{figure}[ht] \centering
\begin{tikzpicture}[xscale=1]
\draw [domain=-30:30] plot ({3*cos(\x)}, {3*sin(\x)});
\draw [dashed,domain=-20:20] plot ({6*cos(\x)}, {6*sin(\x)});
 \draw (2,0.3) node {\small $M$};
 \draw (2,-0.3) node {\small $\chi_{M,\epsilon} \equiv 1$};
 \draw[latex-latex] (3,0) -- (6,0);
 \draw (4.5,0.3) node {\small $\epsilon$};
 \draw (4.5,-0.3) node {$\chi_{M,\epsilon} \in [0,1]$};
 \draw (7.6,0.3) node {\small $\operatorname{dist} (M,x) \geq \epsilon$};
 \draw (7.6,-0.3) node {\small $\chi_{M,\epsilon} \equiv 0$};
\end{tikzpicture}
\caption{Cutoff function $\chi_{M,\epsilon}$\label{fig:cutoff}}
\end{figure}
\subsubsection{The constants $\cut_2$ and $\cut_3$}
We want to construct a cutoff function $\chi \in C_{\mathrm c}^\infty (\RR^{d+1};[0,1])$ with $\supp \chi \subset B(\rout_3) \setminus \{0\}$ and $\chi (x) = 1$ if $x \in B(\rin_3) \setminus \overline{B(\rout_1)}$. We set $\tilde M = B (r_3)$, $2 \tilde \epsilon = R_3 - r_3$, $h_{\tilde M} (x) = \vert x \rvert - r_3$ and define
\[
 \tilde \chi (x) = \chi_{\tilde M, \tilde\epsilon} (x) .
\]
Note that
\[
 \tilde \chi (x) = \begin{cases}
                    1 & \text{if $x \in B (r_3)$}, \\
                    0 & \text{if $x \not\in B ((r_3 + R_3)/2)$}.
                   \end{cases}
\]
For the partial derivatives we calculate
\begin{align*}
 (\partial_i \tilde \chi) (x) &= -\frac{1}{\tilde\epsilon} \psi' (1 - h_{\tilde M} (x) / \tilde\epsilon) \frac{x_i}{\lvert x \rvert} , \\
 (\partial^2_i \tilde \chi) (x) &= \frac{1}{\tilde\epsilon^2} \psi'' (1 - h_{\tilde M} (x) / \tilde\epsilon) \frac{x_i^2}{\lvert x \rvert^2} - \frac{1}{\tilde\epsilon} \psi' (1 - h_{\tilde M} (x) / \tilde\epsilon) \left( \frac{1}{\lvert x \rvert} - \frac{x_i^2}{\lvert x \rvert^3} \right) .
\end{align*}
Hence, using $\Delta\tilde \chi (x) = 0$ if $x \not \in B (R_3) \setminus B (r_3)$ and $2 \tilde\epsilon = R_3 - r_3=3\euler\sqrt{d}$, we obtain
\begin{align*}
 \lVert \nabla \tilde\chi \rVert_\infty &\leq \frac{C'}{\tilde\epsilon} = \frac{4}{R_3-r_3} = \frac{4}{3\euler\sqrt{d}} \leq 1 ,\\
 \lVert \Delta \tilde\chi \rVert_\infty & \leq \frac{C''}{\tilde\epsilon^2} + \frac{C'}{\tilde\epsilon} \frac{d}{r_3}
 \leq \frac{80+4d}{18\euler^2 d} \leq \frac{84}{18\euler^2} \leq 1 .
\end{align*}
Analogously we find a function $\hat \chi$ with values in $[0,1]$, $\hat \chi (x) = 0$ if $x \in B (r_1)$, $\hat \chi (x) = 1$ if $x \not \in B (R_1)$ and, using $R_1 - r_1 = r_1 \geq \delta^2 / 64$,
\begin{align*}
 \lVert \nabla \hat \chi \rVert_\infty
  &\leq \frac{C'}{R_1-r_1} \leq \frac{128}{\delta^2} ,\\
 \lVert \Delta \tilde\chi \rVert_\infty & \leq \frac{C''}{(R_1 - r_1)^2} + \frac{C'}{(R_1 - r_1)} \frac{d}{r_1}
 \leq \frac{10\cdot64^2}{\delta^4} + \frac{2 d 64^2}{\delta^4} \leq \frac{12d64^2}{\delta^4} .
\end{align*}
Our cutoff function $\chi \in C_{\mathrm c}^\infty (\RR^{d+1};[0,1])$ with $\supp \chi \subset B(\rout_3) \setminus \{0\}$ and $\chi (x) = 1$ if $x \in B(\rin_3) \setminus \overline{B(\rout_1)}$ can be defined by
\[
 \chi (x) = \begin{cases}
             \chi (x) = \hat\chi (x) & \text{if $x \in B (R_1) \setminus \overline{B (r_1)}$}, \\
             \chi (x) = 1 & \text{if $x \in B (r_3) \setminus \overline{B (R_1)}$}, \\
             \chi (x) = \tilde \chi (x) & \text{if $x \in B (R_3) \setminus \overline{B (r_3)}$},
            \end{cases}
\]
and has the properties (recall $V_i = B (R_i) \setminus \overline{B (r_i)}$)
\[
 \max\{\lVert \Delta \chi \rVert_{\infty , V_1} , \lVert \lvert \nabla \chi \rvert \rVert_{\infty , V_1}\} \leq \frac{12d64^2}{\delta^4} =: \frac{\tilde\cut_2}{\delta^4} =: \cut_2
\]
and
\[
 \max\{\lVert \Delta \chi \rVert_{\infty , V_3} , \lVert \lvert \nabla \chi \rvert \rVert_{\infty , V_3}\} \leq \frac{4}{3\euler} =: \cut_3 .
\]

\subsubsection{The constant $\cut_1$}
We choose $M = S_2$, $\epsilon = \delta^2 / 16$ and
\[
 h_{S_2} (x) = x_{d+1} - 1 + \sqrt{ a_2^2 + \frac{\lvert x' \rvert^2}{2 } } .
\]
Obviously, $h_{S_2} (x) \geq \operatorname{dist} (x,S_2)$ if $x \not \in S$ and $h_{S_2} (x) \leq 0$ if $x \in S_2$, cf.\ Fig.~\ref{fig:dist_hyp}.
\begin{figure}[ht] \centering
\begin{tikzpicture}[scale=12,xscale=12]
%
\pgfmathsetmacro{\d}{0.5}
%
\pgfmathsetmacro{\a}{sqrt(1-\d*\d/2)}
\pgfmathsetmacro{\b}{sqrt(2)*\a}
\pgfmathsetmacro{\aa}{sqrt(1-\d*\d/4)}
\pgfmathsetmacro{\bb}{sqrt(2)*\aa}

\draw[thick] plot[domain=-0.05:0.255]({-\aa*cosh(\x)},{\bb*sinh(\x)});
\draw[-latex, very thick] (-1,-2pt)--(-1,\d/1.44 +0.1) node[left] {$\lvert x' \rvert$};
\draw[-latex, very thick] (-1,0)--(1-\aa-1+0.01,0) node[below] {$x_{d+1}$};
\draw[] (-1,\d/1.4) node[left] {$\frac{\delta}{\sqrt{2}}$};
\draw[] (-\aa,0) node[below] {$1 - a_2$};
\draw[] (-1cm,0) node[left] {$0$};
\draw (1-\aa-1+0.01 , \d/1.68) node {$x$};
\draw[latex-latex] (1-\aa-1+0.0089 , \d/1.68) -- (-0.9905 , \d/1.68);
\draw[latex-latex] (1-\aa-1+0.0089 , \d/1.68) -- (-0.973 , \d/3.8);
\draw (-0.975 , \d/1.55) node {$h_{S_2} (x)$};
\draw (-0.96 , \d/2.5) node {$\operatorname{dist} (x,S_2)$};
\draw (-0.988 , \d/3.5) node {$S_2$};
\end{tikzpicture}
\caption{Illustration of the hyperbolas $h_2$ and $h_3$\label{fig:dist_hyp}}
\end{figure}
Since the distance between the sets $S_2$ and $\RR^{d+1}_+ \setminus S_3$ is bounded from below by $\delta^2 / 16$, see Appendix~\ref{app:distance}, we find that
\[
 \chi_{S,\epsilon} (x) = \begin{cases}
             1 & \text{if $x \in S_2$} , \\
             0 & \text{if $x \in \RR^{d+1}_+ \setminus S_3$} .
            \end{cases}
\]
For the partial derivatives we calculate for $x \in S_3 \setminus S_2$
\[
 (\partial_i \chi)(x) = -\frac{1}{\epsilon} \psi' (1-h_{S_2} (x) / \epsilon)
 \begin{cases}
\frac{x_i}{2} \left(a_2^2 + \frac{\lvert x' \rvert^2}{2 }\right)^{-1/2} & \text{if $i \in \{1,\ldots , d\}$}                                                                       , \\
1 & \text{if $i = d+1$} ,
\end{cases}
\]
and find by using $\lvert x' \rvert^2 \leq 1/4$ for $x \in S_3 \setminus S_2$ and $a_2^2 \in [15/16 , 1]$
\[
 \lVert \nabla \chi_{S , \epsilon} \rVert_\infty^2 \leq \frac{16}{466}\left(\frac{C'}{\epsilon} \right)^2 , \quad \text{hence,} \quad
 \lVert \nabla \chi_{S , \epsilon} \rVert_\infty \leq \frac{6}{\delta^2} .
\]
For the second partial derivatives we calculate for $i \in \{1,\ldots , d\}$
\begin{multline*}
 (\partial_i^2 \chi)(x) =
 \frac{1}{\epsilon^2} \psi'' (1-h_S (x)/\epsilon) \frac{x_i^2}{4} \left(a_2^2 + \frac{\lvert x' \rvert^2}{2} \right)^{-1}  \\
 - \frac{1}{\epsilon} \psi' (1-h_S (x)/\epsilon) \left[ \frac{1}{2} \left( a_2^2 + \frac{\lvert x' \rvert^2}{2} \right)^{-1/2} - \frac{x_i^2}{4} \left( a_2^2 + \frac{\lvert x' \rvert^2}{2} \right)^{-3/2} \right] ,
\end{multline*}
and $\partial_{d+1}^2 \chi (x) = (1/\epsilon^2) \psi'' (1-h_S (x)/\epsilon)$.
Hence, using $\lvert x' \rvert^2 \leq 1/4$ for $x \in S_3 \setminus S_2$ and $a_2^2 \in [15/16 , 1]$
\[
 \lVert \Delta \chi \rVert_\infty  \leq
  \frac{C''}{\epsilon^2} \frac{237}{233}
   + \frac{C'}{2\epsilon a_2} (d+8/233) \leq \frac{16^2 \cdot 11d}{\delta^4} =: \frac{\tilde\cut_1}{\delta^4} =: \cut_1.
\]
\subsubsection{Distance of $S_2$ and $\RR^{d+1}_+ \setminus S_3$}\label{app:distance}
The distance between the sets $S_2$ and $\RR^{d+1}_+ \setminus S_3$ is given by the distance between the two hyperbolas
\[
 h_i \colon \frac{(x-1)^2}{a_i^2} - \frac{y^2}{b_i^2} = 1 , \quad i \in \{2,3\}
\]
in $\{(x,y) \in \RR^2 \colon x,y \geq 0\}$, where $a_i$ and $b_i$ are given by
\[
 a_2^2 = 1 - \frac{\delta^2}{4}, \quad
 a_3^2 = 1 - \frac{\delta^2}{2} \quad \text{and} \quad
 b_i^2 = 2 a_i^2 .
\]
See Fig.~\ref{fig:hyperbolas} for an illustration.
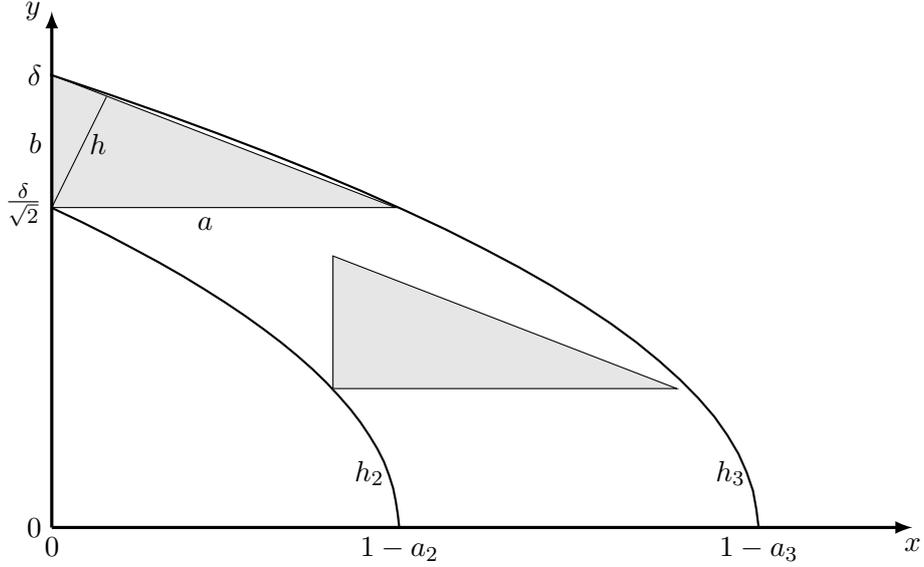
\begin{figure}[ht]\centering
\begin{tikzpicture}[scale=12,xscale=12]
\pgfmathsetmacro{\d}{0.5}
%
\pgfmathsetmacro{\a}{sqrt(1-\d*\d/2)}
\pgfmathsetmacro{\b}{sqrt(2)*\a}
\filldraw[black!10] (-1, \d/1.414) -- (-0.9686,\d/1.414) -- (-1,\d)--(-1, \d/1.414);
\draw[] (-1, \d/1.414) -- (-0.9686,\d/1.414) -- (-1,\d)--(-1, \d/1.414);
\draw[thick] plot[domain=-0:0.37]({-\a*cosh(\x)},{\b*sinh(\x)});
\pgfmathsetmacro{\aa}{sqrt(1-\d*\d/4)}
\pgfmathsetmacro{\bb}{sqrt(2)*\aa}
\draw[thick] plot[domain=-0:0.255]({-\aa*cosh(\x)},{\bb*sinh(\x)});
\draw[-latex, very thick] (-1,0)--(-1,\d cm+2pt) node[left] {$y$};
\draw[-latex, very thick] (-1,0)--(-\a cm + 0.4pt,0) node[below] {$x$};
\draw[] (-1,\d) node[left] {$\delta$};
\draw[] (-1,\d/1.4) node[left] {$\frac{\delta}{\sqrt{2}}$};
\draw[] (-\a,0) node[below] {$1 - a_3$};
\draw[] (-\aa,0) node[below] {$1 - a_2$};
\draw[] (-1cm,0) node[below] {$0$};
\draw[] (-1cm,0) node[left] {$0$};
\draw[] (-0.938,0.06) node {$h_3$};
\draw[] (-0.971,0.06) node {$h_2$};
%
\draw[pattern=north west lines, very thin, black!10] (-1+0.0257, \d/1.414-0.2) -- (-0.9686+0.0257,\d/1.414-0.2) -- (-1+0.0257,\d-0.2) -- (-1+0.0257, \d/1.414-0.2);
\draw[] (-1+0.0257, \d/1.414-0.2) -- (-0.9686+0.0257,\d/1.414-0.2) -- (-1+0.0257,\d-0.2) -- (-1+0.0257, \d/1.414-0.2);
\draw[] (-0.986,\d/1.414) node[below] {$a$};
\draw[] (-1,0.425) node[left] {$b$};
\draw[] (-0.9941,\d/1.414+0.07) node[left] {$h$};
%
\draw (-1,\d/1.414)--(-0.995, \d-0.024);
\end{tikzpicture}
\caption{Illustration of the hyperbolas $h_2$ and $h_3$}
\label{fig:hyperbolas}
\end{figure}
By symmetry we can consider the case $y \geq 0$ only.
First we show that in order to estimate the distance between $h_2$ and $h_3$ from below, it is sufficient to consider the distance between the intersection point of $h_2$ with the $x$-axis and $h_3$.
For every point $(x,y)$ on $h_2$, we define the distance $a(y)$ between $h_2$ and $h_3$ in $x$-direction and the distance $b(x)$ in $y$-direction.
This gives rise to a rectangular triangle with catheti of length $a$ and $b$.
Due to concavity and monotonicity of $h_2$ and $h_3$, considered as functions of $x$, a lower bound for the distance of $(x,y)$ to $h_3$ is given by the height of this rectangular triangle, given by
\[
h (x) := \frac{a(x) b (x)}{\sqrt{a^2 (x)+b^2 (x)}}.
\]
By a straightforward calculation, we see that $b(x)$ is strictly increasing as a function of $x$ while $a(y)$ is strictly decreasing as a function of $y$.
Thus, taking the triangle at the point $(0,\delta/ \sqrt{2})$ and moving it along $h_2$, the triangle will always stay below $h_3$, see Fig.~\ref{fig:hyperbolas}. Hence, $h$ evaluated at the point $(0,\delta/\sqrt{2})$ is a lower bound for $\mathrm{dist} (h_2,h_3)$.
We have
\begin{equation*}
a(\delta/\sqrt{2}) = 1 - \sqrt{1 - \frac{\delta^2}{4}}  \quad \text{and} \quad
b(0)  = \left( 1 - \frac{1}{\sqrt{2}} \right) \delta.
\end{equation*}
Hence,
\begin{equation*}
\mathrm{dist}(h_2,h_3) \geq \frac{ \left( 1 - \frac{1}{\sqrt{2}} \right) \delta \left( 1 - \sqrt{1 - \frac{\delta^2}{4}} \right) }{ \sqrt{ \left( 1 - \frac{1}{\sqrt{2}} \right)^2 \delta^2 + \left( 1 - \sqrt{1 - \frac{\delta^2}{4}} \right)^2 }}.
\end{equation*}
We use $\delta^2 / 8 \leq  1- \sqrt{1 - \delta^2 / 4} \leq \delta/2$ and obtain the bound
\[
 \mathrm{dist}(h_2,h_3) \geq \frac{\left( 1 - \frac{1}{\sqrt{2}} \right) \delta^2}{8 \sqrt{\left( 1 - \frac{1}{\sqrt{2}} \right)^2 + 1/4}}  > \frac{\delta^2}{16}  .
\]
\subsection{The constant $\protect{\tilde C_{\sfuc}}$} \label{app:final_const}
We estimate $\tilde C_{\sfuc} = D_1^{-4} \left( D_2 D_3 D_4 \right)^{-4/\gamma}$.
We start by estimating the constants $D_i$, $i \in \{1,\ldots,  4\}$ separately. By $K_i$, $i \in \{1,\ldots , 11\}$ we will denote positive constants which do not depend on $\delta$, $b$ and $\lVert V \rVert_\infty$, and will change from line to line. We will frequently use $\delta^2 / 64 \leq \rin_1 \leq \delta/8$, $K_1 \geq (1/2)^{K_2} \geq \delta^{K_2}$, and $a^{\ln b} = b^{\ln a}$ for $a,b > 0$.
For $D_1$ and $D_2$ we calculate
\[
  D_1^{-4} \geq \delta^{K_1 \bigl(1+\lVert V \rVert_\infty^{2/3} \bigr)} ,
  \quad \text{and} \quad
  D_2^{-4/\gamma} \geq \delta^{K_2 \bigl(1+ \lVert V \rVert_\infty^{2/3} \bigr)} .
\]
For the constant $D_3$ we have
\begin{equation*}
 D_3^{-4/\gamma}
 = \left( \mathrm{e}^{4 R_3 \sqrt{b}} \right)^{-\ln (r_3 / r_1) / \ln 2}
 = \left( \frac{r_1}{r_3} \right)^{\ln (\mathrm{e}^{4 R_3 \sqrt{b}}) / \ln 2} \geq \delta^{K_1 (1+ \sqrt{b})} .
\end{equation*}
For the constant $D_4$ we have $D_4^2 \leq K_1 (1+ \lVert V \rVert_\infty)$ and hence
\begin{align*}
D_4^{-4/\gamma} & \geq K_1^{-2/\gamma} (1+\lVert V \rVert_\infty)^{-2/\gamma}
\geq \delta^{K_2} \delta^{K_3 \ln (1+\lVert V \rVert_\infty)} \geq \delta^{K_4 (1+\lVert V \rVert_\infty^{2/3})} .
\end{align*}
Hence, we obtain the desired behaviour
\[
\tilde C_{\sfuc} \geq \delta^{K_1 (1+ \lVert V \rVert_\infty^{2/3} + \sqrt{b})} .
\]
}
%
%
\section{On single-site potentials for the breather model}
 \label{app:assumption_single_site}
\subsection{Our assumptions}
In this section we discuss our conditions on the single-site potential in the random breather model. Recall that the $\omega_j$ were supported in $[\omega_{-}, \omega_{+}] \subset [0,1)$ whence we consider $t \in [\omega_{-}, \omega_{+}]$ and $\delta \in [0, 1 - \omega_{+}]$.
 \begin{definition}
 We say that a family $\{ u_t \}_{t \in [0,1]}$ of measurable functions $u_t : \RR^d \to \RR$ satisfies condition
 \begin{itemize}
  \item[(A)] if the $u_t$ are uniformly bounded, have uniform compact support and if there are $\au,\bu > 0$ and $\ao, \bo \geq 0$ such that for all $t \in [\omega_{-}, \omega_{+}]$, $\delta  \leq 1 - \omega_{+}$ there is $x_0 = x_0(t,\delta) \in \RR^d$ with
\begin{equation}\label{eq:condition(A)}
 u_{t+\delta} - u_{t} \geq \au \delta^{\ao} \chi_{ B(x_0, \bu \delta^{\bo}) }.
\end{equation}
  \item[(B)] if $u_t$ is the dilation of a function $u$ by $t$, defined as $u_t (x) := u(x/t)$ for $t > 0$ and $u_0 \equiv 0$, where $u$ is the characteristic function of a bounded convex set $K$ with $0 \in \overline{K}$.
  \item[(C)] if $u_t$ is the dilation of a measurable function $u$ which is positive, radially symmetric, compactly supported, bounded with decreasing radial part $r_u:[0,\infty) \to [0, \infty)$ and such there is a point $\tilde x > 0$ where $r_u$ is differentiable, $r_u'(\tilde x) < 0$ and $r_u(\tilde x) > 0$.
  \item[(D)] if $u_t$ is the dilation of a measurable function $u$ which is positive, radially symmetric, radially decreasing, compactly supported, bounded and which has a discontinuity away from $0$.
  \item[(E)] if $u_t$ is the dilation of a measurable function which is non-positive, radially symmetric, radially increasing, compactly supported, bounded, and such there is a point $\tilde x > 0$ where the radial part $r_u$ is differentiable, $r_u'(\tilde x) > 0$ and $r_u(\tilde x) < 0$ .
 \end{itemize}
\end{definition}
\begin{remark}
\label{remark:(A)_to_(E)}
Condition (A) is the abstract assumption we used in the proof of the Wegner estimate for the random breather model. Conditions (B) to (E) are relatively easy to verify for specific examples of single-site potentials. In particular, (C) holds for many natural choices of single-site potentials such as the smooth function $\chi_{\lvert x \rvert < 1} \exp \left( 1 /( \lvert x \rvert^2 - 1) \right)$ or the hat-potential $\chi_{\lvert x \rvert < 1} (1 - \lvert x \rvert )$.
Furthermore, we note that if we have families $\{u_t\}_{t \in [0,1]}$ and $\{v_t\}_{t \in [0,1]}$ where $u_t$ satisfies (A) and $v_{t+\delta} - v_t \geq 0$ for all $t \in [\omega_{-}, \omega_{+}]$ and $\delta \in (0, 1 - \omega_{+}]$, then the family $\{u_t + v_t\}_{t \in [0,1]}$ also satisfies (A).
\end{remark}
\begin{lemma}
We have that each of the assumptions (B) to (E) implies (A).
\end{lemma}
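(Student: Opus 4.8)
The plan is as follows. In all four cases the family $\{u_t\}_{t\in[0,1]}$ consists of dilations $u_t(x)=u(x/t)$ (with $u_0\equiv0$) of a single bounded, compactly supported function $u$, so $\lVert u_t\rVert_\infty=\lVert u\rVert_\infty$ and $\supp u_t=t\,\supp u\subset\supp u$ for $t\le1$; thus the uniform‑boundedness and uniform‑support clauses of (A) hold automatically, and only the quantitative estimate \eqref{eq:condition(A)} needs proof. For (B) we have $u=\chi_K$ with $K$ convex, bounded, $0\in\overline K$, so $u_t=\chi_{tK}$; by convexity and $0\in\overline K$ one gets $tK\subseteq(t+\delta)K$ up to a null set, hence $u_{t+\delta}-u_t=\chi_{(t+\delta)K\setminus tK}\ge0$, and it suffices to fit a ball of radius $\gtrsim\delta$ into $(t+\delta)K\setminus tK$. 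I would use the support function $h_K(v)=\sup_{x\in K}\langle x,v\rangle$: pick a unit vector $v_0$ with $h:=h_K(v_0)>0$ (possible since $K$ has nonempty interior), a point $y\in\overline K$ with $\langle y,v_0\rangle=h$, and $p$ with $B(p,\eta)\subseteq\operatorname{int}K$; then $y_s:=(1-s)y+sp$ satisfies $B(y_s,s\eta)\subseteq K$ and $\langle y_s,v_0\rangle=h-s(h-\langle p,v_0\rangle)$ for $s\in(0,1]$. Choosing $s\asymp\delta/(t+\delta)$ small enough, the ball $B\!\left((t+\delta)y_s,\tfrac12(t+\delta)s\eta\right)$ lies in $(t+\delta)K$ while all of its points $x$ satisfy $\langle x,v_0\rangle>t\,h_K(v_0)$, hence $x\notin tK$; its radius is $\gtrsim\delta$. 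This gives \eqref{eq:condition(A)} with $\alpha_1=1$, $\alpha_2=0$, $\beta_2=1$.

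For (C)–(E) I would write $u(x)=r_u(\lvert x\rvert)$, so that $u_{t+\delta}(x)-u_t(x)=r_u(\lvert x\rvert/(t+\delta))-r_u(\lvert x\rvert/t)$ with $\lvert x\rvert/(t+\delta)\le\lvert x\rvert/t$; monotonicity of $r_u$ fixes the sign of the difference, and the task becomes to exhibit a spherical shell $\{A-c\delta^{\beta_2}<\lvert x\rvert<A+c\delta^{\beta_2}\}$ with $A\gtrsim\delta^{\beta_2}$ (which then contains $B(A e_1,\tfrac12 c\delta^{\beta_2})$) on which the difference is $\ge\alpha_1\delta^{\alpha_2}$. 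The borderline value $t=0$ (occurring when $\omega_-=0$) is included throughout, since there $u_{0+\delta}-u_0=u_\delta(x)=r_u(\lvert x\rvert/\delta)$. Case (D) is the simplest: $r_u$ is decreasing with a jump $j:=r_u(\tilde x^-)-r_u(\tilde x^+)>0$ at some $\tilde x>0$, and on the shell $\{t\tilde x<\lvert x\rvert<(t+\delta)\tilde x\}$ one has $\lvert x\rvert/(t+\delta)<\tilde x<\lvert x\rvert/t$, so $u_{t+\delta}(x)\ge r_u(\tilde x^-)$ and $u_t(x)\le r_u(\tilde x^+)$, whence the difference is $\ge j$; this shell has inner radius $t\tilde x\ge0$ and thickness $\delta\tilde x$, so it contains $B\!\left((t+\tfrac\delta2)\tilde x\,e_1,\tfrac13\delta\tilde x\right)$, which yields \eqref{eq:condition(A)} with $\alpha_1=j$, $\alpha_2=0$, $\beta_1=\tilde x/3$, $\beta_2=1$ (for $t=0$ the shell $\{\lvert x\rvert<\delta\tilde x\}$ works, since $u_\delta(x)=r_u(\lvert x\rvert/\delta)\ge r_u(\tilde x^-)\ge j$ there).

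Case (C) is where the work lies. Here $r_u$ is positive, decreasing, differentiable at $\tilde x$ with $r_u'(\tilde x)<0$, so for $0<h\le h_0$ one controls $r_u(\tilde x\mp h)=r_u(\tilde x)\pm\lvert r_u'(\tilde x)\rvert h+o(h)$. The two dilated arguments differ by $\tilde x\delta/(t+\delta)\ge\tilde x\delta$ (using $t+\delta\le1$) near $\lvert x\rvert=\tilde x t$ or $\lvert x\rvert=\tilde x(t+\delta)$, and this is to be played against the genuine slope of $r_u$ at $\tilde x$. I would split according to the ratio $\delta/(t+\delta)$: (i) $t\le\delta$ (in particular $t=0$): anchor the shell at $\lvert x\rvert=(t+\delta)\tilde x$, where $\lvert x\rvert/(t+\delta)=\tilde x$ while $\lvert x\rvert/t\ge2\tilde x$, so the difference is $\ge r_u(\tilde x)-r_u(2\tilde x)\ge\tfrac12\lvert r_u'(\tilde x)\rvert\min\{\tilde x,h_0\}>0$ at the centre and, by continuity of $r_u$ near $\tilde x$, still $\ge$ a positive constant on a shell of half‑width $\asymp\delta$; (ii) $t>\delta$ with $\delta/(t+\delta)$ small: anchor at $\lvert x\rvert=\tilde x t$ and use $r_u\!\left(\tilde x-\tfrac{\tilde x\delta}{2(t+\delta)}\right)-r_u(\tilde x)\ge\tfrac14\lvert r_u'(\tilde x)\rvert\tilde x\delta$, on a shell whose half‑width must be taken $\asymp\delta t$ so that the variation of $u_t$ across it is dominated, hence a ball of radius $\gtrsim\delta^2$; (iii) $t>\delta$ with $\delta/(t+\delta)$ bounded below, a ``comparable‑scales'' regime in which all relevant lengths are $\asymp\delta$, handled as in (i) by monotonicity and continuity. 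Merging the regimes, and using $\delta\le1$ to unify $\alpha_2\in\{0,1\}$ to $\alpha_2=1$ and $\beta_2\in\{1,2\}$ to $\beta_2=2$, gives \eqref{eq:condition(A)}. Finally, (E) is the mirror image of (C): passing from $u$ to $-u$ turns a non‑positive, radially increasing profile with $r_u(\tilde x)<0$, $r_u'(\tilde x)>0$ into a non‑negative, radially decreasing profile with value $-r_u(\tilde x)>0$ and slope $-r_u'(\tilde x)<0$ at $\tilde x$ — exactly the setting of (C) — and the argument of (C), applied to $-u$ with the dilation parameter oriented so that the relevant difference carries the correct sign, produces the bound.

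The step I expect to be the main obstacle is (C), and hence by mirroring (E): a single differentiability point supplies only a local, one‑sided linear lower bound on the variation of $r_u$, and converting this into a lower bound on $u_{t+\delta}-u_t$ that remains uniform all the way down to $t=0$ — where $\lvert x\rvert/t$ and $\lvert x\rvert/(t+\delta)$ diverge and a naive Taylor comparison at $\tilde x$ fails — is what forces the case distinction on $\delta/(t+\delta)$ together with the careful placement and scaling of the shell (anchored at $\tilde x t$ or $\tilde x(t+\delta)$, of half‑width $\asymp\delta$ or $\asymp\delta t$) described above. By contrast (B), via convexity and the support function, and (D), via a robust jump, are comparatively routine.
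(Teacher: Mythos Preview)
Your plan is sound for (B)--(D). For (B) your support-function construction is the same geometric idea as the paper's, which normalises so that a point of maximal norm lies on the $x_1$-axis and then slides a small ball along a segment towards that point. For (D) you differ genuinely: you argue directly that on the shell $\{t\tilde x<\lvert x\rvert<(t+\delta)\tilde x\}$ the two rescaled radii straddle the jump, forcing $u_{t+\delta}-u_t\ge j$ there, whereas the paper decomposes $u=v+w$ with $v=j\,\chi_{B(0,\tilde x)}$ (handled via (B)) and $w$ a nonnegative radially decreasing remainder satisfying $w_{t+\delta}-w_t\ge0$, then appeals to the remark on sums; your route is shorter and self-contained, the paper's makes the logical dependence on (B) explicit. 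For (C) the paper's execution is structurally simpler than your three-regime split: it fixes the single anchor $\hat x=(t+\delta)\tilde x$ and works on the \emph{one-sided} interval $[\hat x,\hat x+\kappa\tilde\epsilon\,\delta]$ with a small $\kappa<1/4$, so that both $\lvert x\rvert/(t+\delta)$ and $\lvert x\rvert/t$ lie in $[\tilde x,\infty)$, and then only the dichotomy $\tilde x\delta/t\le\tilde\epsilon$ versus $\tilde x\delta/t>\tilde\epsilon$ is needed, yielding $\alpha_2=\beta_2=1$. Your scheme is workable but less tight; in your case~(ii) the constraint ``variation of $u_t$ dominated'' actually reads $w\lesssim\delta\, t/(t+\delta)$, which in that regime is $\asymp\delta$ rather than $\asymp\delta t$, so your own argument already gives $\beta_2=1$ rather than the $\beta_2=2$ you state.

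On (E) there is a genuine gap. Replacing $u$ by $-u$ and applying the argument for (C) yields $(-u)_{t+\delta}-(-u)_t\ge\alpha_1\delta^{\alpha_2}\chi_B$, i.e.\ $u_{t+\delta}-u_t\le-\alpha_1\delta^{\alpha_2}\chi_B$, the \emph{reverse} of \eqref{eq:condition(A)}; since $r_u$ is increasing one in fact has $u_{t+\delta}-u_t\le0$ everywhere, so \eqref{eq:condition(A)} cannot hold for any nonempty ball. Your phrase ``with the dilation parameter oriented so that the relevant difference carries the correct sign'' does not repair this. The paper is equally terse here (``an adaptation of (C)''), and the same sign issue is present; the point is that (E) delivers the opposite monotonicity, which is equally usable for eigenvalue shifting and the Wegner estimate, but is not condition (A) as literally formulated.
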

\begin{proof}
Assume (B).
We will show (A) with $\au = 1$, $\ao = 0$, $\bo = 1$ and $\bu = c$, and hence it is enough to show the existence of a $c \delta$-ball in $K_{t + \delta} \backslash K_t$.

For $K \subset \RR^d$ and $t > 0$ we define $K_t := \{ x \in \RR^d : x/t \in K \}$ and $K_0 := \emptyset$.
 Without loss of generality let $x := (1,0,...,0)$ be a point in $\overline K$ which maximizes $\lvert x \rvert$ over $\overline K$.
 For $\lambda \in \RR$ define the half-space $H_\lambda := \{ x \in \RR^d : x_1 \leq \lambda \}$, where $x_1$ stands for the first coordinate of $x$.
 By scaling, the existence of a $c \delta$-ball in $K_{t + \delta} \backslash K_t$ is equivalent to the existence of a $c \delta / (t + \delta)$-ball in $K \backslash K_{t / (t + \delta)}$.
 By maximality of $(1,0,...,0)$, we have $K \subset H_1$ and hence $K_{t / (t + \delta)} \subset H_{t / (t + \delta)}$. Thus, it is sufficient to find a $c \frac{\delta}{t + \delta}$-ball in $K \backslash H_{t /(t + \delta)}$.
 By convexity of $K$, the set $\{ z \in K : z_1 = 1/2 \}$ is nonempty and since $K$ is open, we find $z_0 \in K$ with $z_1 = 1/2$ and $0 < c < 1/2$ such that $B(z_0,c) \subset K$.
We define for $\lambda \in [0,1)$ the set $X(\lambda) \subset \RR^d$ as $X(\lambda) := B(z_0 + \lambda ((1,0,...,0) - z_0), c \cdot (1 - \lambda))$.
By convexity and the fact that $(1,0,...,0) \in \overline K$, we have $X(\lambda) \subset K$.
In fact, let $\{x_n\}_{n \in \NN} \subset K$ be a sequence with $x_n \to (1,0,...,0)$. We define open sets $X_n(\lambda)$ by replacing $(1,0,...,0)$ by $x_n$ in the definition of $X(\lambda)$. By convexity of $K$, every $X_n$ is a subset of $K$ whence $\bigcup_{n \in \NN} X_n(\lambda) \subset K$. Furthermore we have $X(\lambda) \subset \bigcup_{n \in \NN} X_n(\lambda)$. Thus $X(\lambda) \subset K$.
We now choose $\lambda := \frac{t}{t + \delta}$.
Then $X(\lambda) \cap H_{\lambda} = \emptyset$.
Noting that $c ( 1 - \lambda ) = c \frac{\delta}{t + \delta}$, we see that $X(\lambda)$ is the desired $c \frac{\delta}{t + \delta}$-ball.
\par
Now we assume (C).
Let  $r_u'(\tilde x) = - C_1$. Then there is $\tilde \epsilon > 0$ such that
\begin{equation}
\label{eq:A_to_C_derivative}
 r_u(\tilde x + \epsilon) - r_u(\tilde x) \in \left[- 2 \epsilon C_1, \frac{- \epsilon}{2} C_1 \right]\quad \text{for all}\ \lvert \epsilon \rvert < \tilde \epsilon.
\end{equation}
It is sufficient to prove the following: There are $C_2, C_3 > 0$ such that for every $0 \leq t \leq \omega_{+}$ and every $0 < \delta \leq 1 - \omega_{+}$ there is $\hat x = \hat x(t, \delta)$ such that
\begin{equation}
\label{eq:A_to_C}
 r_u \left( \frac{\hat x + C_2 \delta}{t + \delta} \right) - r_u \left( \frac{\hat x}{t} \right) \geq C_3 \delta.
\end{equation}
Indeed, by monotonicity of $r_u$, \eqref{eq:A_to_C} implies that for every $x \in [\hat x, \hat x + C_2 \delta]$ we have
\[
 r_u \left( \frac{x}{t + \delta} \right) - r_u \left( \frac{x}{t} \right)
 \geq
 r_u \left( \frac{\hat x + C_2 \delta}{t + \delta} \right) - r_u \left( \frac{\hat x}{t} \right) \geq C_3 \delta
\]
whence (A) holds with $x_0 := (\hat x + C_2 \delta /2) e_1$, $\au = C_3$, $\bu = C_2/2$, $\ao = \bo = 1$.


In order to see \eqref{eq:A_to_C}, let $\hat x =(t+\delta)\tilde{x}$.
We choose $\kappa \in (0, 1/4)$ and assume that $\tilde x - 4 \kappa \tilde \epsilon > 0$ (this is no restriction since \eqref{eq:A_to_C_derivative} also holds for smaller $\tilde \epsilon$).
Furthermore, we define $C_2 := \kappa \tilde \epsilon$.
Now we distinguish two cases. If $\tilde x \delta /t \leq \tilde \epsilon$, then \eqref{eq:A_to_C_derivative} implies
\begin{align*}
 r_u \left( \frac{\hat x + C_2 \delta}{t + \delta} \right) - r_u \left( \frac{\hat x}{t} \right)
 &=
 r_u \left( \tilde x + \kappa \frac{\tilde \epsilon \delta}{t + \delta} \right) - r_u \left( \tilde x \right)
 + r_u \left( \tilde x \right) - r_u \left( \tilde x + \tilde x \frac{\delta}{t}  \right)\\
 &\geq
 - 2 \kappa C_1 \frac{\tilde \epsilon \delta}{t + \delta} + C_1 \frac{\tilde x \delta}{2 t}  \ge \delta\frac{ C_1}{2} \frac{\tilde x - 4 \kappa \tilde \epsilon }{t+\delta} .
\end{align*}
If $\tilde x \delta /t > \tilde \epsilon$, we use $r_u(\tilde x) - r_u(\tilde x + \tilde x \delta / t) \geq r_u(\tilde x) - r_u(\tilde x + \tilde \epsilon)$ and \eqref{eq:A_to_C_derivative} to obtain
\[
  r_u \left( \frac{\hat x + C_2 \delta}{t + \delta} \right) - r_u \left( \frac{\hat x}{t} \right) \ge
   - 2 \kappa C_1 \frac{\tilde \epsilon \delta}{t + \delta} +C_1 \frac{\tilde \epsilon}{2} = \frac{C_1 \tilde \epsilon}{2} \left( 1- \frac{4 \kappa \delta}{t+\delta}   \right)  \ge \frac{C_1 \tilde \epsilon}{2} \left( 1- 4 \kappa \right).
\]
Hence
\begin{equation*}
  r_u \left( \frac{\hat x + C_2 \delta}{t + \delta} \right) - r_u \left( \frac{\hat x}{t} \right)  \geq C_3 \delta,
  \text{ where } C_3
 :=
 \min
 \left\{
 \frac{C_1 ( \tilde x - 4 \kappa \tilde \epsilon)}{2}, \frac{C_1 \tilde \epsilon (1 - 4 \kappa)}{2(1 - \omega_{+})}
 \right\} > 0 .
\end{equation*}
\par
The fact that (D) implies (A) is a consequence of (B).
In fact, a functions $u$ as in (D) can be decomposed $u = v + w$ where $v$ is (a multiple of) a characteristic function of a ball, centered at the origin, and $w$ is positive, radially symmetric and decreasing. Indeed, let $x_0$ be the point of discontinuity with the smallest norm. Then we can take $v = (u(x_0-)-u(x_0+))\chi_{B(0,\lvert x_0 \rvert )}$, where $\chi_A$ denotes the characteristic function of the set $A$.

The function $v$ satisfies (A) by (B) (since balls are convex) and we have $w_{t + \delta} - w_{t} \geq 0$. By Remark~\ref{remark:(A)_to_(E)}, the family $\{ u_t \}_{t \in [0,1]} = \{ v_t + w_t \}_{t \in [0,1]}$ also satisfies (A).
The case (E) is an adaptation of (C).
\end{proof}

\subsection{Earlier assumptions}
For certain types of random breather potentials Wegner estimates have been
given before, cf.\ \cite{CombesHM-96} and \cite{CombesHN-01}.
As we will show below, none of these results covers the \emph{standard breather model}.
The methods of \cite{CombesHM-96,CombesHN-01} seem to be motivated by reducing, thanks to linearization,
the random breather model to a model of alloy type and then applying methods designed for the latter one.
They are not focused to take advantage of the inherent, albeit non-linear, monotonicity of the random breather model.
The following assumptions on the single site potential are considered in
\cite{CombesHM-96} and \cite{CombesHN-01}, respectively.
\begin{definition}
 We say that a measurable function $u\colon \RR^d \to [0, \infty)$ satisfies condition
 \begin{itemize}
  \item[(F)] if $u$ is compactly supported, in $C^2(\RR^d)$, nonzero in a  neighbourhood of the origin and for some $c_0 > 0$ we have the inequalities
  \begin{equation}\label{Condition2_CHM}
   - x \cdot \nabla u \geq 0\ \text{for all}\ x \in \RR^d \quad
   \text{and} \quad
  \left\lvert \frac{ (x, \mathrm{Hess}[u]x)}{ x \cdot \nabla u} \right\rvert \leq c_0 < \infty\ \text{for all}\ x \in \RR^d \backslash \{ 0 \}.
  \end{equation}
  \item[(G)] if $u \not \equiv 0$ is compactly supported, in $C^1(B_1 \backslash \{0\})$, and there is $\epsilon_0 > 0$ such that
  \begin{equation}\label{Condition_CHN}
   - x \cdot \nabla u - \epsilon_0 u \geq 0 \ \text{for all}\ x \in \RR^d \backslash \{ 0 \}.
  \end{equation}
 \end{itemize}
\end{definition}
We have the following Lemma.
\begin{lemma}
 We have that
 \begin{itemize}
  \item (F) never holds,
  \item (G) implies that $u$ has a singularity at the origin.
 \end{itemize}
\end{lemma}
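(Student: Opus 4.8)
The plan is to reduce both assertions to a one-dimensional analysis along rays issuing from the origin. Fix $\theta\in S^{d-1}$ and set $f_\theta(r):=u(r\theta)$. At a point $x=r\theta$ one has $x\cdot\nabla u(x)=r\,f_\theta'(r)$ and $(x,\mathrm{Hess}[u]\,x)=r^2 f_\theta''(r)$, so the hypotheses \eqref{Condition2_CHM} and \eqref{Condition_CHN} become ordinary differential inequalities for $f_\theta$. In case (F) they say $f_\theta'\le 0$ on $(0,\infty)$ and $r\,\lvert f_\theta''(r)\rvert\le c_0\,\lvert f_\theta'(r)\rvert$, where I read the quotient bound as $\lvert(x,\mathrm{Hess}[u]\,x)\rvert\le c_0\lvert x\cdot\nabla u\rvert$ (so that $f_\theta''=0$ wherever $f_\theta'=0$). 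In case (G) they say $-r\,f_\theta'(r)\ge\epsilon_0 f_\theta(r)\ge 0$ for $r\in(0,1)$, equivalently $\tfrac{\drm}{\drm r}\bigl(r^{\epsilon_0}f_\theta(r)\bigr)\le 0$ there. I would record these reductions first.

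For the claim that (F) never holds I would argue as follows. Since $u\ge 0$ does not vanish on a neighbourhood of the origin, continuity together with the monotonicity of $f_\theta$ gives $f_\theta(0)=u(0)>0$, while compact support of $u$ makes $f_\theta$ and $f_\theta'$ vanish on $[M,\infty)$ for some $M$. By the mean value theorem there is $r_0\in(0,M)$ with $f_\theta'(r_0)<0$. On the maximal interval $[r_0,r_1)$ on which $f_\theta'<0$ I would use $\bigl\lvert\tfrac{\drm}{\drm r}\ln\bigl(-f_\theta'(r)\bigr)\bigr\rvert=\bigl\lvert f_\theta''(r)/f_\theta'(r)\bigr\rvert\le c_0/r$ and integrate from $r_0$ to obtain $-f_\theta'(r)\ge\bigl(-f_\theta'(r_0)\bigr)(r_0/r)^{c_0}>0$ on $[r_0,r_1)$; letting $r\uparrow r_1$ this contradicts $f_\theta'(r_1)=0$, so $r_1=\infty$ and $f_\theta'<0$ on all of $[r_0,\infty)$, contradicting $f_\theta'(M)=0$. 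Hence no admissible $u$ exists.

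For the claim that (G) forces a singularity at the origin I would first note that, $r^{\epsilon_0}f_\theta(r)$ being non-increasing on $(0,1)$, one has $f_\theta(r)\ge (s/r)^{\epsilon_0}f_\theta(s)$ for $0<r<s<1$; hence if $u(s\theta)>0$ for some $s<1$, then $f_\theta(r)\to+\infty$ as $r\to 0^{+}$, so $u$ is unbounded in every neighbourhood of $0$, i.e.\ has a singularity there. It then suffices to produce points with $u>0$ arbitrarily close to $0$, i.e.\ to exclude $u\equiv 0$ on a ball $B(\rho)$. This follows once more from the radial inequality: were $f_\theta\equiv 0$ on $(0,\rho)$ but $f_\theta>0$ somewhere in $(\rho,1)$, then $f_\theta$ would have to increase strictly away from the value $0$, giving a point $\xi$ with $f_\theta'(\xi)>0$ and $f_\theta(\xi)>0$, in contradiction with $-\xi f_\theta'(\xi)\ge\epsilon_0 f_\theta(\xi)>0$. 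Thus $f_\theta\equiv 0$ on $(0,1)$ for every $\theta$, so $u\equiv 0$ on $B_1$, and since $\supp u\subset B_1$ this makes $u\equiv 0$, contradicting $u\not\equiv 0$.

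The main obstacle I anticipate is the Gr\"onwall-type step in case (F): one must keep track of the sign of $f_\theta'$, work on the maximal interval where it is strictly negative, treat the behaviour at the right endpoint correctly, and invoke the differential identities only where $u$ is $C^2$ (and, in case (G), only on $(0,1)$, where $u$ is $C^1$). The remaining pieces — passing from the $d$-dimensional hypotheses to the radial inequalities, and noticing that $\tfrac{\drm}{\drm r}\bigl(r^{\epsilon_0}f_\theta\bigr)\le 0$ is precisely \eqref{Condition_CHN} — are routine one-variable calculus.
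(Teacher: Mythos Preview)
Your argument is correct. The reduction to radial slices $f_\theta(r)=u(r\theta)$ and the identities $x\cdot\nabla u=r f_\theta'$, $(x,\mathrm{Hess}[u]\,x)=r^2 f_\theta''$ are exactly what is needed, and both one-variable arguments go through.

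The paper's proof differs from yours mainly in case (F). There the authors work on the \emph{negative} half-line, anchoring at $x_0=\min\supp u$, and derive from $\lvert u''\rvert\le (2c_0/\lvert x_0\rvert)\,u'$ on $(x_0,x_0/2)$ a Picard-iteration bound
\[
u'(x)\le\lVert u'\rVert_\infty\,\frac{1}{n!}\Bigl(\tfrac{2c_0(x-x_0)}{\lvert x_0\rvert}\Bigr)^n\to 0,
\]
forcing $u'\equiv 0$ near the edge of the support, a contradiction. Your route is more direct: you integrate the logarithmic derivative bound $\lvert(\ln(-f_\theta'))'\rvert\le c_0/r$ to get the power-type lower bound $-f_\theta'(r)\ge(-f_\theta'(r_0))(r_0/r)^{c_0}$, which cannot vanish at a finite $r$. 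Both exploit the same inequality, but yours is a one-step Gr\"onwall argument rather than an iterated integral.

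For (G) the two proofs are essentially equivalent: the paper integrates $-u'\ge\epsilon_0 u/x$ directly to obtain a logarithmic divergence, while your observation that $r^{\epsilon_0}f_\theta$ is non-increasing is just the integrating-factor reformulation of the same inequality and yields the blow-up in one line. One small remark: condition~(G) does not literally state $\supp u\subset B_1$, so your final clause should rather read that the monotonicity argument extends $f_\theta\equiv 0$ from $(0,\rho)$ to all of $(0,\infty)$ (the inequality \eqref{Condition_CHN} is assumed on $\RR^d\setminus\{0\}$), giving $u\equiv 0$ directly; this is a trivial adjustment.
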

\begin{proof}
We first show the statements in dimension $1$.
Assume (F) and let $x_0 := \min \mathrm{supp}\ u$. Note that $x_0 < 0$.
By the first inequality in \eqref{Condition2_CHM} we have that $u' \geq 0$ for $x \in (x_0,0)$.
The second inequality in \eqref{Condition2_CHM} implies
\[
\lvert u''(x) \rvert \leq  \frac{c_0 u'(x)}{\lvert x \rvert} \leq \frac{2 c_0 u'(x)}{\lvert x_0 \rvert} \text{ for all}\ x \in (x_0, x_0 / 2)
\]
whence we have
\[
 u'(x) = \int_{x_0}^x u''(y) \mathrm{d} y \leq \int_{x_0}^x \lvert u''(y) \rvert \mathrm{d} y \leq \frac{2 c_0}{\lvert x_0 \rvert} \int_{x_0}^x u'(y) \mathrm{d}y
\]
and iteratively
\begin{align*}
 u'(x) \leq &
 \frac{(2 c_0)^n}{\lvert x_0 \rvert^n} \int_{x_0}^x \int_{x_0}^{x^{(1)}} ... \int_{x_0}^{x^{(n-1)}} u'(x^{(n)})\ \mathrm{d} x^{(n)} ... \mathrm{d} x^{(1)} \\
 \leq & \lVert u' \rVert_\infty \cdot  \frac{(2 c_0)^n}{\lvert x_0 \rvert^n}
\int_{x_0}^x \int_{x_0}^{x^{(1)}} ... \int_{x_0}^{x^{(n-1)}} \mathrm{d} x^{(n)} ... \mathrm{d} x^{(1)} \\
= &\lVert u' \rVert_\infty \cdot \left( \frac{ 2 c_0 (x - x_0)}{\lvert x_0 \rvert} \right)^n / n!\quad \rightarrow 0\ \text{as}\ n \to \infty
\end{align*}
for all $x \in x_0, x_0/2$. We found $u' \equiv 0$ on $(x_0,x_0/2)$,  which is a contradiction.
\par
\medskip
Now we assume (G). The function $u$ cannot have its supremum at a point of differentiability for else it would have to be zero at its maximum which would imply $u \equiv 0$.
Condition \eqref{Condition_CHN} implies that $u$ is increasing on the negative half axis and decreasing on the positive half axis.
We conclude that the supremum has to be the limit at the only possible non-differentiable point $x = 0$ and we will show that this limit is $\infty$.
By monotonicity of $u$ and the assumption $u\not\equiv 0$, there is $\delta_0 > 0$ such that
\begin{equation*}
u(x) \geq u(\delta_0) > 0\ \mathrm{ on }\ (0, \delta_0)\ \text{or}\ u(x) \geq u(-\delta_0) > 0\ \mathrm{ on }\ (- \delta_0,0).
\end{equation*}
Without loss of generality, we assume $u(x) \geq u(\delta_0) > 0$ on $(0, \delta_0)$.
Furthermore, from \eqref{Condition_CHN} it follows that
\begin{equation*}
- u^\prime(x) \geq \epsilon_0 \frac{u(x)}{x}\ \text{for}\ x > 0.
\end{equation*}
Using this inequality we estimate for $0 < x < \delta_0$:
\begin{align*}
u(x) &\geq u(x) - u(\delta_0) = - \int_x^{\delta_0}u^\prime(s) ds \geq \epsilon_0 \int_x^{\delta_0}  \frac{u(s)}{s} ds \\
& \geq \epsilon_0 u(\delta_0) \int_x^{\delta_0} s^{-1} ds = \epsilon_0 u(\delta_0) \left[ \ln ( \delta_0 ) - \ln (x) \right] \rightarrow \infty\ \text{as}\ x \to 0.
\end{align*}
\par
\medskip
Now we show the claim in higher dimensions. If the single site potential  $U : \RR^d \to [0, \infty)$ does not vanish identically
there is a point $y$ such that $U(y)>0$. Assume without loss of generality that $y$ lies on the $x_1$-axis
and define $u : \RR \to [0, \infty)$ by $u(x_1)=U(x_1,0,\ldots,0)$.
Note that if $U$ satisfies the assumption (F) or (G), respectively then $u$ satisfies (F) or (G) as well and the one-dimensional argument can be applied to $u$. Hence, the statement of the Lemma also holds for $U$.
\end{proof}
In the light of the comments made at the beginning of this section, the occurrence of a singularity is not surprising since in the case of a single-site potential with a polynomial singularity, $u(x) = \lvert x \rvert^{- \alpha}$, we have
\begin{equation*}
u(x/\omega_j) = \left\lvert x/\omega_j \right\rvert^{- \alpha} = \omega_j^\alpha \lvert x \rvert^{- \alpha} = \omega_j^\alpha u(x).
\end{equation*}
and thus the random breathing would correspond to a multiplication which would allow to reduce the breather model to the well-understood alloy type model
$V_\omega(x) = \sum_j \omega_j u(x-j)$.
\ifthenelse{\boolean{journal}}{}{
\section{Proof of Corollary \ref{cor:result1}}\label{sec:proof:cor} 
%
%
We fix
\[
\phi =  \sum_{k \in \NN : E_k \leq b} \alpha_k \phi_k
\in \operatorname{Ran} \chi_{(-\infty,b]} (H_{t,L})
\]
and define the map $g : \Lambda_{L/G} \to \Lambda_L$, $g (y) = G \cdot y$.
For all $\phi_k$ the eigenvalue equation reads $ - t \Delta_L \phi_k + V_L \phi_k = E_k \phi_k$ in $\Lambda_L$ where $E_k \leq b$.

We want to transform this into an eigenvalue equation for $\phi_k \circ g$ in $\Lambda_{L/G}$.
Therefore we compose with $g$ and find
\[
 - t ( \Delta_L \phi_k) \circ g + (V_L \circ g) (\phi_k \circ g) = E_k ( \phi_k \circ g)
\]
in $\Lambda_{L/G}$.
The chain rule yields $ ( \Delta_L \phi_k) \circ g = (1 / G^2)  \Delta_{L/G} ( \phi_k \circ g)$ which implies
\[
 - t/G^2 \Delta_{L/G} ( \phi_k \circ g ) + ( V_L \circ g) ( \phi_k \circ g ) = E_k (\phi_k \circ g) .
\]
Thus the eigenvalue equation for $\phi_k \circ g$ is
\[
 -\Delta_{L/G} (\phi_k \circ g) + \left( \frac{G^2}{t} V_L \circ g \right) \left( \phi_k \circ g \right) =  \left( \frac{G^2}{t} E_k  \right) (\phi_k \circ g)
\text{ on }
\Lambda_{L/G}.
\]
Hence,
\[
 \phi \circ g \in \operatorname{Ran} \chi_{(-\infty,G^2 b / t]} \left( -\Delta_{L/G} + (G^2 / t) (V_L \circ g) \right) .
\]
The set $W_\delta(L) \subset \Lambda_L$ arises from a $(G,\delta)$-equidistributed sequence whence the set $W_\delta(L) / G \allowbreak := \{ x \in \RR^d : x \cdot G \in W_\delta(L) \} \subset \Lambda_{L/G}$ arises from a $(1, \delta/G)$-equidistributed sequence.
%
By a coordinate transformation
and Theorem~\ref{thm:result1} we obtain
 \[
  \lVert \phi \rVert_{W_\delta(L)}^2
  =
  G^{d} \lVert \phi \circ g \rVert_{W_\delta(L) / G}^2
  \geq
  G^{d} C_\sfuc^{G,t} \lVert \phi \circ g \rVert_{\Lambda_{L/G}}^2
  =
  C_\sfuc^{G,t} \lVert \phi \rVert_{\Lambda_{L}}^2 ,
 \]
where $C_\sfuc^{G,t} = C_\sfuc (d , \delta / G , b G^2 / t , \lVert V \rVert_\infty G^2 / t)$.
}
\small

\section*{Acknowledgement}

This work has been partially supported by the DFG under grant \emph{Unique continuation principles and equidistribution properties of eigenfunctions}.
and by the binational German-Croatian DAAD-MZOS project \emph{Scale-uniform controllability of partial differential equations}.
I.N. was partially supported by HRZZ project grant 9345. M.T.\ thanks Constanza Rojas-Molina for pointing out that the initial length scale estimate follows  from the unique continuation principle.

%
\providecommand{\bysame}{\leavevmode\hbox to3em{\hrulefill}\thinspace}
\providecommand{\MR}{\relax\ifhmode\unskip\space\fi MR }
\providecommand{\MRhref}[2]{%
  \href{http://www.ams.org/mathscinet-getitem?mr=#1}{#2}
}
\providecommand{\href}[2]{#2}

\end{document}